\definecolor{green}{RGB}{0,180,0}
\newtheorem*{theorem*}{Theorem}
\newtheorem{theorem}{\textbf{Theorem}}[section]
\newtheorem{proposition}[theorem]{\textbf{Proposition}}
\newtheorem{lemma}[theorem]{\textbf{Lemma}}
\theoremstyle{definition}
\newtheorem{definition}[theorem]{Definition}
\newtheorem{remark}[theorem]{Remark}
\newtheorem{example}[theorem]{Example}
\newtheorem{procedure}[theorem]{Procedure}
\numberwithin{equation}{section}
\newcommand{\OO}{\mathcal{O}}  
\newcommand{\p}{\mathfrak{p}}  
\newcommand{\Z}{\mathbb{Z}}  
\newcommand{\N}{\mathbb{N}}  
\newcommand{\C}{\mathbb{C}}  
\DeclareMathOperator{\Ind}{Ind} 
\DeclareMathOperator{\GL}{GL}  
\DeclareMathOperator{\wt}{wt} 
\DeclareMathOperator{\val}{val} 
\DeclareMathOperator{\Lu}{Lu} 
\DeclareMathOperator{\LLu}{\mathbf{Lu}} 
\DeclareMathOperator{\SLLu}{\mathbf{SLu}} 
\DeclareMathOperator{\SSLLu}{\mathbf{SSLu}} 
\DeclareMathOperator{\GT}{GT} 
\DeclareMathOperator{\GGT}{\mathbf{GT}} 
\DeclareMathOperator{\SGGT}{\mathbf{SGT}} 
\DeclareMathOperator{\G}{G} 
\tikzstyle{spin} = [circle, draw, fill=white, minimum size=14pt, inner sep=0pt, text=black]
\tikzstyle{spins}=[every node/.style={spin}]
\tikzstyle{path}=[line width=1.5pt, spins]
\newcommand{\grid}[2]{

    \foreach \i in {0,...,#1}{
        \pgfmathtruncatemacro\col{int(#1-\i)}
        \draw (2*\i+1,0) -- (2*\i+1,2*#2) node[spin, label=above:$\col$] {$+$};
    }

    \foreach \i in {1,...,#2}{
        \pgfmathtruncatemacro\row{#2+1-int(#2-\i+1)}
        \draw (0,2*\i-1) node[spin, label=left:$\row$] {$+$} -- (2*#1+2, 2*\i-1);
    }

    \foreach \i in {0,...,#1}{
        \foreach \j in {1,...,#2}{
            \node[spin] at (2*\i+1, 2*\j-2) {$+$};
            \node[spin] at (2*\i+2, 2*\j-1) {$+$};
        }
    }
}
\begin{document}
	\title{Combinatorics of Iwahori Whittaker Functions}
	\author{Slava Naprienko}
	\email{slava@naprienko.com}
	\urladdr{\href{https://naprienko.com/}{naprienko.com}}
	\let\thefootnote\relax\footnote{The most updated version of the paper is available on \href{naprienko.com/papers/miw/}{https://naprienko.com/papers/miw/}}
	
	\subjclass[2020]{Primary: 22E50; Secondary: 82B23, 05E05, 11F70}
	\keywords{Whittaker functions, Iwahori decomposition, colored lattice models, Lusztig data}
	
	\maketitle
	
	\begin{abstract}
	    We give a combinatorial evaluation of Iwahori Whittaker functions for unramified genuine principal series representations on metaplectic covers of the general linear group over a non-archimedean local field. To describe the combinatorics, we introduce new combinatorial data that we call colored data: colored Lusztig data, colored Gelfand-Tsetlin patterns, and colored lattice models. We show that all three are equivalent. To achieve the result, we give an explicit Iwahori decomposition for the maximal unipotent subgroup of a split reductive group which gives the parametrization of the generalized Mirkovi\'c-Vilonen cycles in the affine flag varieties and is of interest in itself. 
	    
	    Our result is based and naturally extends Peter McNamara's evaluation of the metaplectic spherical Whittaker function in terms of Lusztig data.
	\end{abstract}
	
	\tableofcontents
	
	\section{Introduction}
	
	Consider a split reductive group over a non-archimedean local field. The principal series representation is a representation parabolically induced from a character of the maximal torus. We consider Whittaker functions on the group which are certain matrix coefficients on the principal series representation corresponding to Whittaker functionals and spherical/Iwahori vectors. See \Cref{section:miwappendix} for details. 
	
	The non-achimedean Whittaker functions were introduced by Jacquet in \cite{J67} and were given in the form of a $p$-adic Jacquet integral over the maximal unipotent subgroup. There is a rich theory of combinatorial evaluation of the non-achimedean Whittaker functions explored in \cite{McN11, BBF11, BBBG19, BBBG20} and related special functions in \cite{Tok88, Ok90, HK07} and others. 
	
	The spherical Whittaker function $W_0$ is the first and simplest example. It was computed explicitly by T. Shintani in \cite{Sh76} for Lie groups of type A. Later Casselman and Shalika in \cite{CS80} gave a formula for all Lie types as a product of the deformed Weyl denominator times the character of the dual Langlands group. 
	
	The spherical Whittaker functions $W_0^n$ of the metaplectic $n$-covers for Lie groups of type A is the next example. In \cite{McN11}, McNamara gave the expression of $W_0^n$ in terms of Lusztig data and showed that this data is equivalent to Mirkovi\'c-Vilonen cycles of the affine grassmanian. In \cite{BBF11}, Brubaker, Bump, and Friedberg also expressed $W_0^n$ (as the Whittaker coefficient of Borel Eisenstein series) in terms of crystal graphs (and in terms of strict Gelfand-Tsetlin patterns). 
	
	The Iwahori Whittaker functions $W_w$ in type A are refinements of the spherical Whittaker functions that comes from the Iwahori decomposition instead of the Iwasawa decomposition. In \cite{BBBG19}, a recursive evaluation of the functions $W_w$ was given using the intertwining operators. By connecting the intertwiners with Yang-Baxter equations, functions $W_w$ were expressed in terms of colored lattice models. Moreover,  In \cite{BBBG20}, the calculation was extended for metaplectic $n$-covers of Lie groups of type A in terms of supersymmetric lattice models using similar methods. At the present, it is the most general result which specializes to all functions mentioned above. 
	
	Thanks to the Casselman-Shalika formula (the CS-formula) for the spherical Whittaker function $W_0$, one can study the combinatorics by analyzing the explicit expression given in terms of the special functions. In this approach, one does not need any representation theory to obtain the combinatorial results. These special functions occur independently in the literature with no relation to the Whittaker functions.
	
	Tokuyama in \cite{Tok88} found an exact expression for the generating function of strict Gelfand-Tsetlin patterns which incidentally matched the CS-formula. In \cite{Ok90}, the CS-formula was expressed in terms of partially strict shifted plane partitions. In \cite{HK07}, it was computed combinatorically by applying jeu de taquin to the primed tableaux. In \cite{BBS11}, Brubaker, Bump, and Friedberg evaluated the CS-formula as the partition function of a free-fermionic solvable lattice model which gave a new combinatorial interpretation using Yang-Baxter equation, a tool from statistical mechanics. A posteriori, all these methods give a combinatorial evaluation of the spherical Whittaker function $W_0$. 
	
	Some progress was made for other types other Lie types as well. In \cite{HK02}, a variation of the CS-formula was given combinatorically by Hamel and King. Moreover, they introduced lattice models with U-turns. In \cite{Iv12}, Ivanov used lattice models with U-turns and introduced Yang-Baxter equation that allowed to get the CS-formula for the symplectic group as the partition function of his model. Later, in \cite{Gr17} Gray extended lattice models with U-turns and conjectured that the partition function is the spherical Whittaker function for the metaplectic covers of the symplectic group. In \cite{FZ16}, a partial evaluation of the CS-formula for type B was found by Friedberg and Zhang. Also, in \cite{DF21} DeFranco gave the CS-formula for type $G_2$ by brute force. A non-finite variant of such computations was given for the metaplectic cover of $G_2$ by Leslie in \cite{Les19}.
	
	\begin{center}
    \begin{table}
    \begin{tabular}{ cc c } 
    & Spherical vector & Iwahori vector \\
    \hline
    Type A & \cite{Tok88}, \cite{Ok90}, \cite{HK07}, \cite{BBS11} & \cite{BBBG19}, this paper\\ 
    Metaplectic type A & \cite{BBF11}, \cite{McN11}  & \cite{BBBG20}, this paper\\ 
    Type C & \cite{HK02} (partial), \cite{Iv12} & -- \\
    Metaplectic type C & \cite{Gr17} (partial) & -- \\
    Type B & \cite{FZ16} (partial) & -- \\
    Type G2 & \cite{DF21} & --\\
    Other types & -- & -- \\
    \\
    \end{tabular}
    
    \caption{Combinatorial evaluations of the Whittaker functions and related special functions given by the Casselman-Shalika formula}
    \end{table}
    \end{center}
	
	The main goal of this paper is to extend McNamara's approach from the spherical Whittaker function to the Iwahori Whittaker functions. An advantage of McNamara's approach is the description of the spherical Whittaker function as a sum over combinatorial data. Each term in the sum corresponds to a Mirkovi\'c-Vilonen cycle. The combinatorial sum is obtained directly, without using intertwining operators, recursive relations, explicit expressions, or multiple Dirichlet series. This geometric approach has the following steps. 
	
	\begin{enumerate}
	    \item Write explicit Iwasawa decomposition for the maximal unipotent subgroup $U$ that corresponds to Mirkovi\'c-Vilonen cycles in the affine grassmanian. It gives the decomposition $U = \bigsqcup_m C_m^{\textbf{i}}$ into cells;
	    \item The spherical Whittaker function $W_0^n$ is defined by a $p$-adic integral over $U$. The explicit decomposition for $U$ gives a combinatorial sum $\sum_m \int_{C_m^{\textbf{i}}}$;
	    \item Evaluate each individual integral $\int_{C_m^{\textbf{i}}}$ explicitly in appropriate coordinates to obtain combinatorial weights for each term;
	    \item Describe the decomposition and weights combinatorically, for example, in terms of Gelfand-Tsetlin patterns.
	\end{enumerate}
	
	In Section 4 of \cite{McN11}, McNamara found an explicit Iwasawa decomposition for the maximal unipotent subgroup $U$ into disjoint cells  $C_m^{\textbf{i}}$. In \Cref{section:decomposition}, we extend his decomposition and produce explicit Iwahori decomposition for $U$ into cells $S_{m,\sigma}^{\textbf{i}}$ that are the refinement of McNamara's decomposition. 
	
	In Section 8 of \cite{McN11}, McNamara computes the values of the spherical Whittaker functions using the explicit decomposition. In \Cref{section:calculation} we extend the calculation to the case of the Iwahori Whittaker functions using cells $S_{m,\sigma}^{\textbf{i}}$. 
	
	There Iwahori case has several difficulties compared to the spherical case from McNamara's work. First, the space of Iwahori Whittaker functions is not one dimensional which results in computing each Iwahori basis element separately. Second, the Iwahori Whittaker functions are defined by the affine Weyl group, not just by the diagonal which results in two parameters for the group element to consider. Third, we introduce new combinatorial data that we call colored data. We give equivalent definitions of colored Lusztig data, colored Gelfand-Tsetlin patterns, and colored lattice models. 
	
	The lattice model we introduce is dual to the one from \cite{BBBG19, BBBG20}. This gives a geometric meaning to the admissible states of the lattice model as they are in bijection with the cells $S_{m,\sigma}^{\textbf{i}}$ and the weights are the $p$-adic integrals over the corresponding cells. We remark that we don't use the intertwining operators or Yang-Baxter equation in our approach. Instead, we give the combinatorial evaluation directly from the definition of the Iwahori Whittaker functions by the Jacquet integral.
	
	Let us be more precise now.

    Let $G$ be a split reductive group over a non-achimedean local field and let $\widetilde{G}$ be an $n$-fold metaplectic cover of $G$. Let $\widetilde{T}$ be a fixed split maximal torus of $\widetilde{G}$, $\widetilde{B}$ the Borel subgroup, $U^{\pm}$ be the maximal unipotent subgroup and its opposite, $J$ the opposite Iwahori subgroup, and $W$ the Weyl group. See \Cref{section:metaplecticgroups} for details. In \Cref{section:decomposition}, we give the explicit Iwahori decomposition for $U$ that depends on a long word decomposition $\textbf{i}$:
    
    \begin{theorem*}[\Cref{thm:iwahoridecomposition}]
        Let $u \in U$ and $w' \in W$. Then there is an explicit Iwahori decomposition $uw' = p_1h_1\pi_1j_1 \in U^+\widetilde{T}WJ$ with $p_1 \in U^+$, $h_1 \in \widetilde{T}$, $\pi_1 \in W$, $j_1 \in J$ given by \Cref{lem:iwahoridecompositionstep}.
    \end{theorem*}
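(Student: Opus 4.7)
The plan is to reduce the global Iwahori decomposition to the single-step version encapsulated in \Cref{lem:iwahoridecompositionstep} by inducting on the length of a factorization of $u$ along the fixed long word $\mathbf{i}$. First, I would fix a reduced decomposition $\mathbf{i}=(i_1,\ldots,i_N)$ of the longest Weyl element, which orders the positive roots as $\beta_1,\ldots,\beta_N$ and gives, by the standard parametrization of the unipotent radical, a unique expression
\begin{equation*}
u = x_{\beta_1}(t_1)\,x_{\beta_2}(t_2)\cdots x_{\beta_N}(t_N),\qquad t_k \in F.
\end{equation*}
This realizes $u$ as a word of length $N$ in one-parameter subgroups in a prescribed order, which is exactly the setup where the one-step lemma applies.

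Next, I would induct on $N$, keeping an invariant of the form
\begin{equation*}
x_{\beta_k}(t_k)\cdots x_{\beta_N}(t_N)\,w' \;=\; p\,h\,\pi\,j
\end{equation*}
with $p \in U^+$, $h \in \widetilde{T}$, $\pi \in W$, $j \in J$. The base case $k=N+1$ is trivial: the left-hand side is just $w'$, which already sits inside $W \subset \widetilde{T}WJ$. For the inductive step, suppose the tail starting at $k+1$ has been put in the required form $p\,h\,\pi\,j$. I would then need to absorb one more unipotent factor on the left:
\begin{equation*}
x_{\beta_k}(t_k)\cdot p\,h\,\pi\,j.
\end{equation*}
The $p$-part is harmless since $U^+$ is a group, so $x_{\beta_k}(t_k)\cdot p = p'\in U^+$, and one is reduced to analyzing $p'\,h\,\pi\,j$ where only the "inner" factor needs adjustment. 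More carefully, the lemma is applied to rewrite a single $x_{\beta_k}(t_k)$ multiplied against the accumulated $h\pi$ on the right; conjugating $x_{\beta_k}(t_k)$ across $h\pi$ produces (up to a $\widetilde{T}$-rescaling) another root subgroup element whose valuation dictates whether it lies in $J$ already or requires a Bruhat-type rearrangement using the $\mathrm{SL}_2$ identity
\begin{equation*}
x_{\alpha}(s) \;=\; x_{-\alpha}(s^{-1})\,h_\alpha(s)\,w_\alpha\,x_{-\alpha}(s^{-1}),
\end{equation*}
which is precisely what the single-step lemma packages. Applying this produces new factors in $U^+$, $\widetilde{T}$, and $W$, together with a factor in $J$ that can be absorbed into the existing $j$ because $J$ is normalized appropriately by $\widetilde{T}$ and by elements of $W$ that preserve the relevant root chambers.

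The bookkeeping step I expect to be the main obstacle is verifying that the new $U^+$ factor really lives in $U^+$ and not merely in some conjugate, and that the accumulated $J$-factor stays in $J$ throughout. This is the usual subtlety for Iwahori decompositions: one must track signs of valuations of the coefficients $t_k$ (and their transforms after conjugation by $h\pi$) and verify that each generated root subgroup element falls into the correct side, either contributing to $U^+$ or to $J$. I would address this by choosing the order of factors $\beta_k$ dictated by $\mathbf{i}$ precisely so that conjugation by the partial product $h\pi$ at each stage sends the new root to a positive root, which guarantees the $U^+$-membership, and by exploiting that $J$ contains the full lower triangular $\mathcal{O}$-unipotent part together with the prounipotent radical, which is preserved under the relevant torus and Weyl rescalings. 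Once this is verified, iterating the lemma $N$ times produces the desired $p_1 h_1 \pi_1 j_1$, and uniqueness follows from the standard fact that the map $U^+ \times \widetilde{T} \times W \times J \to \widetilde{G}$ is injective on each Bruhat cell.
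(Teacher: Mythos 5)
The paper's proof of this theorem is a one-liner: set $k=1$ in \Cref{lem:iwahoridecompositionstep} and observe that $p_1 \in G_0 = U^+$. Your proposal instead attempts to re-derive that proposition by a decreasing induction, which is the right shape of argument, but your inductive step contains a genuine gap.

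The gap is the claim that $x_{\beta_k}(t_k)\cdot p = p'\in U^+$ ``since $U^+$ is a group.'' Because $u \in U = U^-$, the one-parameter factors in the factorization of $u$ along $\mathbf{i}$ are negative-root elements $e_{-\gamma_k}(t_k)$, not elements of $U^+$. The product of a negative-root unipotent with $p \in U^+$ does not lie in $U^+$; moving those negative-root factors across the already-decomposed tail is precisely the difficulty of the explicit Iwahori decomposition, and with the invariant $p\in U^+$ the inductive step simply does not close.

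The paper resolves this by carrying a weaker invariant. At stage $k$ the accumulated ``upper'' factor $p_k$ lies not in $U^+$ but in $e_{-\gamma_{k-1}}(t_{k-1})G_{k-1}$, where $G_{k-1}$ consists of products $\bigl(\prod_{j<k-1} e_{-\gamma_j}(t_j)\bigr)\bigl(\prod_{j>k-1}e_{\gamma_j}(t_j)\bigr)$ --- a mixture of signs. Only at $k=1$ does this degenerate to $G_0 = U^+$, which is exactly why the theorem follows by setting $k=1$. The technical device that makes the inductive step rigorous is \Cref{lem:commute}: one may commute $e_{-\gamma_k}(z)$ past any element of $G_k$, producing a new element of $G_k$ on the left and leaving $e_{-\gamma_k}(z)$ on the right. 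That commutation lemma and the $G_k$ stratification are the ingredients missing from your argument; without them your invariant does not propagate. Your intuitions about the $\mathrm{SL}_2$ identity, the valuation casework, and absorption into $J$ are all correct, but they enter only after $e_{-\gamma_k}(x_k+t_k)$ has been pushed past $G_k$ and conjugated across $h_{k+1}$ to produce the coordinate $y_k$. Lastly, the theorem asserts only existence of a decomposition, so the closing uniqueness remark, while in the right spirit, is not needed and would require care since $J\cap\widetilde{B}$ is nontrivial.
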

    
    We use this explicit decomposition to write $U$ as a union of cells which are related to the generalized Mirkovi\'c-Vilonen cycles in the affine flag varieties. In this paper, it will be important that we can describe each cell in appropriate coordinates to compute integrals explicitly. 
	
	\begin{theorem*}[\Cref{thm:celldecomposition}]
	    The following decomposition holds. 
	    \[
	        U \cap \widetilde{B}w_0wJ(w')^{-1} = \bigsqcup_{\substack{m \in \N^N\\ \sigma \in \Sigma(m,w,w')}}S_{m,\sigma}^{\textbf{i}},
	    \]
	    where colorings $\Sigma(m,w,w')$ are defined by \Cref{def:cells}.
	\end{theorem*}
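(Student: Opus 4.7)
The plan is to build the cell structure by running the explicit Iwahori decomposition from \Cref{lem:iwahoridecompositionstep} iteratively along the reduced expression $\textbf{i} = (i_1, \ldots, i_N)$ for $w_0$. I would first fix Lusztig-style coordinates on $U$ adapted to $\textbf{i}$, writing every $u \in U$ as an ordered product of one-parameter root subgroup elements $x_{\beta_k}(t_k)$ with $\beta_k = s_{i_1}\cdots s_{i_{k-1}}\alpha_{i_k}$. The valuations $m_k = \val(t_k)$ then assemble into the tuple $m \in \N^N$ (with the convention $m_k = \infty$ if $t_k = 0$), which is the first half of the cell label.

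Next, I would process the product $u w'$ one root subgroup factor at a time, invoking the one-step decomposition at each stage. Inductively, after absorbing the first $k-1$ factors, the intermediate element has a form $p_k h_k \pi_k j_k \in U^+\widetilde{T}WJ$, and multiplying by the next $x_{\beta_k}(t_k)$ on the appropriate side forces a case analysis: depending on whether $m_k$ lies in a prescribed range and on an Iwahori-level branch dictated by \Cref{lem:iwahoridecompositionstep}, the new factor either stays inside $U^+$ or contributes an additional simple reflection to $\pi_k$, together with torus and Iwahori corrections. The Iwahori-level branch is exactly the coloring datum $\sigma_k$; recording it at every step yields the full coloring $\sigma$. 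Running this to $k = N$ produces the decomposition $uw' = p_1 h_1 \pi_1 j_1$ of the first theorem, and I would then define $S^{\textbf{i}}_{m,\sigma}$ to be the preimage of $(m,\sigma)$ under this process.

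With $S^{\textbf{i}}_{m,\sigma}$ so defined, disjointness of the cells as $(m,\sigma)$ varies is immediate from the uniqueness of Lusztig coordinates combined with the determinism of the step recursion, and $U = \bigsqcup_{m,\sigma} S^{\textbf{i}}_{m,\sigma}$ follows because the recursion is defined on every input. The theorem then reduces to the identification
\[
    u \in \widetilde{B} w_0 w J (w')^{-1} \iff \pi_1 = w_0 w,
\]
which holds by the uniqueness of the Iwahori--Bruhat decomposition. The content of \Cref{def:cells} is precisely that $\Sigma(m,w,w')$ collects those colorings for which the accumulated Weyl element equals $w_0 w$, so this identification rewrites the condition as $\sigma \in \Sigma(m,w,w')$, giving the claimed decomposition.

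The main obstacle is the combinatorial bookkeeping in the middle step: at each stage the simple reflection absorbed into $\pi_k$ depends not only on $m_k$ but also on $h_{k-1}$ and on which chamber $w_{k-1}'$ currently lives in, so one must verify that $\sigma_k$ can indeed be recorded as a local datum and that the simple reflections accumulate in a length-additive way whenever they do accumulate. Once this is set up properly, matching the recursive definition of $\Sigma(m,w,w')$ to the decomposition algorithm is a direct inductive check on $k$.
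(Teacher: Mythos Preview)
Your overall strategy coincides with the paper's: run the step of \Cref{lem:iwahoridecompositionstep} along the reduced word, record at each stage an integer $m_k$ and a Weyl element $\sigma_k$, take $S^{\textbf{i}}_{m,\sigma}$ to be the fiber over $(m,\sigma)$, and identify the double coset via $\pi_1 = w_0\sigma_1$ together with uniqueness of the Iwahori factorization.

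There is, however, a genuine error in how you define $m$. You set $m_k = \val(t_k)$ on the \emph{raw} Lusztig coordinates of $u$, but this need not lie in $\N$, and your ad~hoc convention $m_k = \infty$ has no counterpart in the statement. In the paper the label is $m_k = \max(\val(y_k^{-1}),0)$, where the $y_k$ are the coordinates \emph{produced by} the recursion of \Cref{lem:iwahoridecompositionstep}: each $y_k$ is obtained from the input coordinate $x_k$ after a shift (from commuting $e_{-\gamma_k}(x_k)$ through $p_{k+1}$) and conjugation by the accumulated torus element $h_{k+1}$. Thus $m$ is an output of the algorithm, not something read off before running it; one has $m_k>0$ exactly when $y_k \notin \OO_F$ and $m_k=0$ otherwise, which is also why the $S^{\textbf{i}}_{m,\sigma}$ refine McNamara's cells $C^{\textbf{i}}_m$. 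Once this is corrected your outline becomes the paper's proof. Two minor further points: the paper orders the roots as $\gamma_j = s_{i_N}\cdots s_{i_{j+1}}\alpha_{i_j}$ and runs the recursion by \emph{decreasing} induction on $k$, opposite to your convention; and there is no length-additivity constraint on the accumulated reflections---the coloring rule in \Cref{coloringsDefinition} simply allows both branches in the ambiguous case.
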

	
	Similar to Section 7 of \cite{McN11}, we can show that cells $S_{m,\sigma}^{\textbf{i}}$ are in bijection with the generalized Mirkovi\'c-Vilonen cycles in the affine flag variety which gives a parametrization of the generalized Mirkovi\'c-Vilonen cycles. Alternative parametrization in terms of refined alcove paths was explored in \cite{PRS09} by Parkinson, Ram, and Schwer. 
	
	\begin{remark}
	    We remark that our parametrization of the generalized Mirkovi\'c-Vilonen cycles resembles the Lusztig graph which has the natural crystal structure. It is interesting to find the ``colored crystal structure'' on the generalized Mirkovi\'c-Vilonen cycles in the affine flag variety. 
	\end{remark}
	
	Let now specialize to $G = \GL_{r+1}$. In \Cref{section:iwahorifunctions} we define Iwahori Whittaker functions $W_w\colon \widetilde{G} \to \C$ parametrized by Weyl group elements $w \in W$. In short, these are the matrix coefficients on the unramified genuine principal series representation of $\widetilde{G}$ corresponding to the averaged Whittaker functional and an Iwahori fixed vector. In \Cref{section:calculation}, we evaluate all values of $W_w$ in terms of colored Lusztig data, combinatorial objects we introduce in \Cref{section:coloreddata}.  

    \begin{theorem*}[\Cref{thm:miw}]
        Let $\lambda \in \Lambda$ with $\lambda_{r+1} = 0$ and let $w, w' \in W$. Then the integrals $\phi_w(\lambda, w'; z)$ defined by \cref{eq:miw} that determines values of Iwahori Whittaker functions is given by 
        \begin{equation*}
            \phi_w(\lambda, w'; z) = \sum_{\textbf{m} \in \LLu(\lambda+\rho,w,w')}\prod_{\alpha \in \Phi^+}\G(\textbf{m},\alpha; q)z^{m_\alpha \alpha},
        \end{equation*}
        where contributions $\G(\textbf{m},\alpha; q) \in \Z[q^{-1}]$ are given explicitly by \eqref{eq:lusztigcontribution}.
    \end{theorem*}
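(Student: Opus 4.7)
The plan is to follow and refine McNamara's strategy from \cite{McN11} for the spherical case, now using the explicit Iwahori decomposition from \Cref{thm:iwahoridecomposition} and the resulting cell decomposition from \Cref{thm:celldecomposition}. By construction, $\phi_w(\lambda,w';z)$ is a $p$-adic integral over $U$ of a Whittaker character times the Iwahori-fixed test function determining $W_w$, and the support of the integrand is exactly $U \cap \widetilde{B} w_0 w J (w')^{-1}$. The first step is therefore to substitute the cell decomposition to rewrite the integral as a double sum
\[
    \phi_w(\lambda,w';z) = \sum_{m \in \N^N}\;\sum_{\sigma \in \Sigma(m,w,w')} \int_{S_{m,\sigma}^{\textbf{i}}}(\cdots),
\]
reducing the problem to a local computation on each cell.

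The second step is the local evaluation. Using the coordinates provided by \Cref{lem:iwahoridecompositionstep}, each cell $S_{m,\sigma}^{\textbf{i}}$ is parametrized by a product of one-parameter subgroups indexed by positive roots along $\textbf{i}$, which factorizes the integration measure as well as the integrand. Tracking the action of the torus factor $h_1$ extracted by the Iwahori decomposition produces the weight $z^{m_\alpha \alpha}$ for each root. What remains on each root coordinate is a one-variable $p$-adic integral involving the additive character $\psi$ and the $n$-th power residue symbol coming from the metaplectic torus character; as in \cite{McN11}, each such factor evaluates in $\Z[q^{-1}]$ to one of a small family of standard expressions — a Gauss sum, a geometric-series factor, or an indicator — depending on the data $(m_\alpha, \sigma_\alpha)$. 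Assembling these yields the product $\prod_{\alpha \in \Phi^+}\G(\textbf{m},\alpha;q) z^{m_\alpha\alpha}$ for a fixed pair $(m,\sigma)$.

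The third step is to identify the indexing set. One verifies that the conditions defining $\Sigma(m,w,w')$ in \Cref{def:cells}, together with the integrality $m \in \N^N$ coming from the integration domain, translate after reindexing positive roots via $\textbf{i}$ into the defining data of the colored Lusztig patterns $\LLu(\lambda+\rho,w,w')$ introduced in \Cref{section:coloreddata}; the shift by $\rho$ appears from the usual $\delta^{1/2}$ twist in the normalization of the principal series.

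The hard part will be the second step carried out uniformly in the Iwahori parameter $w'$ and the coloring $\sigma$. In the spherical case a single character controls every cell, whereas here the Iwahori refinement introduces additional constraints through $\sigma$ that change, on a root-by-root basis, whether the corresponding one-variable integral picks up a Gauss sum, a residue-symbol twist, or collapses to a simpler factor; subtleties also arise at the ``boundary'' roots where the element extracted by \Cref{lem:iwahoridecompositionstep} switches between $U^+$, $\widetilde{T}$, and $J$. Showing that all case splits assemble into exactly the uniform formula $\G(\textbf{m},\alpha;q)$ of \eqref{eq:lusztigcontribution} is the technical core of the argument.
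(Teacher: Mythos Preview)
Your outline follows the paper's approach closely: decompose $U \cap \widetilde{B}w_0wJ(w')^{-1}$ into the cells $S_{m,\sigma}^{\Delta}$, express $f$ and $\psi_\lambda$ in the coordinates of \Cref{lem:iwahoridecompositionstep}, factorize each cell integral into one-variable integrals over the domains $D_{i,j}$, and then match the case-by-case values against \eqref{eq:lusztigcontribution}. That is exactly what the paper does via \Cref{lem:explicitf}, \Cref{lem:explicitpsi}, and \Cref{lem:integralcalculation}.

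There is one genuine gap in your third step. You write that the conditions defining $\Sigma(m,w,w')$ together with $m \in \N^N$ ``translate \dots\ into the defining data of the colored Lusztig patterns $\LLu(\lambda+\rho,w,w')$.'' That is not correct: the cell decomposition of \Cref{thm:celldecomposition} is indexed by \emph{all} of $\N^N$, which is infinite, whereas $\LLu(\lambda+\rho,w,w')$ requires $m \in \Lu(\lambda+\rho)$, i.e.\ $s_{i,j} \geq -1$ for every $(i,j)$. Nothing about the integration domain enforces this. The restriction to $\Lu(\lambda+\rho)$ comes from a separate vanishing argument: when some $s_{i,j} < -1$, one of the one-variable factors is an integral of a nontrivial additive character over $\OO$, $\OO^\times$, or $\p$ and hence vanishes (with a short case analysis when $\psi_{i,j}=1$, passing to a neighboring root). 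Without this step your sum is a priori infinite and the identification with $\LLu(\lambda+\rho,w,w')$ fails. As a side remark, the shift by $\rho$ in the notation is a crystal-parametrization convention for the Lusztig data, not a consequence of the $\delta^{1/2}$ normalization.
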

    
    Compare it with Theorem 8.6 from \cite{McN11}:
    \begin{theorem*}[Theorem 8.6, \cite{McN11}]
        Let $\lambda \in \Lambda$ with $\lambda_{r+1} = 0$. Then the integrals $\phi_0(\lambda; z)$ defined in the paper that determines values of the spherical Whittaker function is given by 
        \[
            \phi_0(\lambda; z) = \sum_{\textbf{m} \in \Lu(\lambda+\rho)}\prod_{\alpha \in \Phi^+}w(m,\alpha;q)z^{m_\alpha \alpha},
        \]
        where contributions $w(m,\alpha; q)$ are given explicitly by (8.2) in \cite{McN11}.
    \end{theorem*}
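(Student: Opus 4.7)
The plan is to derive McNamara's spherical formula as a specialization of the Iwahori-level \Cref{thm:miw}. The bridge between the two is the standard fact that the spherical vector in the unramified principal series decomposes as $\sum_{w \in W} v_w$, a sum of the Iwahori fixed basis vectors indexed by the Weyl group. Applying the averaged Whittaker functional and evaluating at $g = t_\lambda$ (so that the Iwahori parameter $w'$ reduces to $e$) gives, at the level of Jacquet integrals,
\[
    \phi_0(\lambda; z) = \sum_{w \in W}\phi_w(\lambda, e; z).
\]
Geometrically, this identity is nothing but \Cref{thm:celldecomposition} with the colorings forgotten: the colored cells $S_{m,\sigma}^{\textbf{i}}$ refine McNamara's Iwasawa cells $C_m^{\textbf{i}}$ of Section 4 of \cite{McN11}, and summing the Iwahori integrals over $w$ and $\sigma$ recombines them into the spherical integral over $C_m^{\textbf{i}}$ cell by cell.

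Granting the above, substitute \Cref{thm:miw} into each summand and regroup by the forgetful map $\LLu(\lambda+\rho, w, e) \to \Lu(\lambda+\rho)$. The theorem then reduces to the combinatorial identity, for each $m \in \Lu(\lambda+\rho)$,
\[
    \sum_{w \in W}\sum_{\textbf{m} \mapsto m}\prod_{\alpha \in \Phi^+}\G(\textbf{m},\alpha;q) = \prod_{\alpha \in \Phi^+}w(m,\alpha;q).
\]
Both sides factor over the positive roots in the order coming from $\textbf{i}$, so this further reduces root-by-root to showing that summing the colored contribution $\G(\textbf{m},\alpha;q)$ over all admissible colorings at the single root $\alpha$ recovers McNamara's Gauss sum $w(m,\alpha;q)$ from (8.2) of \cite{McN11}. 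The root-local factorization itself is a consequence of how \Cref{thm:celldecomposition} builds the cells $S_{m,\sigma}^{\textbf{i}}$ from independent color choices at each positive root, so the sum over $w \in W$ on the left reassembles into a product of root-local sums.

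The main obstacle will be the single-root identity $\sum_\sigma \G(\textbf{m},\alpha;q) = w(m,\alpha;q)$. Globally, the cell-decomposition argument guarantees the identity after integrating over all of $U$, but isolating its root-local version requires unwinding \eqref{eq:lusztigcontribution} and matching the $p$-adic integrals over colored sub-cells to McNamara's Gauss sums term by term. A natural sanity check is the non-metaplectic case $n = 1$, where $\G$ collapses to an elementary $q$-power and the sum over colorings reproduces McNamara's weight by a direct count; for general $n$, the same argument goes through once combined with the quadratic Gauss sum computation already carried out in \cite{McN11}.
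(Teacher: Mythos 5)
Your high-level strategy is the reverse reading of the paper's own remark that $\phi_0(\lambda;z)=\sum_{w\in W}\phi_w(\lambda,w';z)$: sum \Cref{thm:miw} over $w$, regroup along the forgetful map $\LLu(\lambda+\rho,w,e)\to\Lu(\lambda+\rho)$ using \Cref{thm:celldecomposition} and \Cref{lem:mcnamaracellrelation}. That is a genuinely different route from the source proof of the quoted statement, which in \cite{McN11} is a direct evaluation of the integrals over the Iwasawa cells $C_m$ (the present paper only cites the result). The route is viable in principle, but your key reduction has a real gap.

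The gap is the claimed root-by-root factorization. The cells are \emph{not} built from independent color choices at each positive root: by \Cref{coloringsDefinition} (equivalently \eqref{eq:instrutionsrules}), $\sigma_k$ is constructed recursively from $\sigma_{k+1}$, a branch exists only when $m_k=0$ and $\sigma_{k+1}^{-1}(\alpha_{i_k})\in\Phi^-$, and, more seriously, the contribution $\G(\textbf{m},(i,j);q)$ in \eqref{eq:lusztigcontribution} depends on the value \emph{and color} of the adjacent entry $(i+1,j)$ (the cases ``$D=0$ and $d=b$'' versus ``$d\neq b$'', which encode whether $\psi_{i,j}$ is trivial in \Cref{lem:explicitpsi}). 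Hence the sum over colorings is a sum over paths in a branching tree with adjacent roots coupled, and your proposed single-root identity $\sum_\sigma\G(\textbf{m},\alpha;q)=w(m,\alpha;q)$ is not even well posed, since the left-hand side is not a function of the data at $\alpha$ alone. What is actually required is a backward induction along the $\Delta$-order: at each branch point the two weights recombine as $q^{-1}+(1-q^{-1})=1$ and $q^{-1}+(-q^{-1})=0$ (reflecting $\int_{\OO}=\int_{\p}+\int_{\OO^\times}$), and one must verify that after this recombination the dependence on the downstream coloring, including the $\psi_{i,j}=1$ cases, cancels and reproduces McNamara's weights (8.2), which depend on $m$ only. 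Without that verification the argument is incomplete, and the obvious fallback --- using $\sum_\sigma\int_{S_{m,\sigma}}=\int_{C_m}$ and then asserting $\int_{C_m}f\psi_\lambda = \prod_\alpha w(m,\alpha;q)\,z^{m_\alpha\alpha}$ --- is circular, since that evaluation is precisely Theorem 8.4/8.6 of \cite{McN11}, the statement to be proved.
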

    
    As usual, one can use the combinatorial formulas to get the branching rules, specializations, and asymptotics for the Iwahori Whittaker functions. We do not do it here. 
    
    \begin{remark}
        Note that $\phi_0(\lambda;z) = \sum_{w \in W}\phi_w(\lambda,w';z)$. Therefore, \Cref{thm:miw} is a refinement of McNamara's calculation. We remark that in the summation, non-trivial cancellations between Iwahori Whittaker functions happen which are ``invisible'' in the spherical case. See \Cref{section:examples} for examples.
    \end{remark}
    
    In \Cref{section:coloreddata}, we define colored Lusztig data, colored Gelfand-Tsetlin patterns, and supersymmetric lattice models. We show that all of them are equivalent. Thus, \Cref{thm:miw} can be written over any of these equivalent data. In examples in \Cref{section:examples} we primarily use colored GT-patterns.
    
    \begin{theorem}[\Cref{thm:bijection}]
        There are weight-preserving bijections between
        \begin{itemize}
            \item colored Lusztig data $\LLu(\lambda+\rho,w,w')$,
            \item colored Gelfand-Tsetlin patterns $\GGT(\lambda+\rho,w,w')$,
            \item colored states $\mathfrak{S}(\lambda+\rho,w,w')$.
        \end{itemize}
    \end{theorem}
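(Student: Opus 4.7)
The plan is to establish the two bijections $\LLu(\lambda+\rho,w,w') \leftrightarrow \GGT(\lambda+\rho,w,w')$ and $\GGT(\lambda+\rho,w,w') \leftrightarrow \mathfrak{S}(\lambda+\rho,w,w')$ as colored refinements of well-known uncolored correspondences. In both cases the underlying set bijection is essentially standard; the real content is in checking that the coloring data on the two sides correspond and that the weights attached to each object match term-by-term.

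First I would construct the bijection $\LLu \leftrightarrow \GGT$. The classical Berenstein--Zelevinsky correspondence (equivalently, Littelmann's string parametrization for the reduced word $\textbf{i} = (1,2,1,3,2,1,\dots)$ used in the paper) identifies $\Lu(\lambda+\rho)$ with $\GT(\lambda+\rho)$ by reading the Lusztig parameters $m_\alpha$ off the differences of consecutive-row entries of a GT pattern, diagonal by diagonal. I would lift this to the colored setting by transferring the coloring $\sigma$ on a Lusztig datum, indexed by positive roots, to a coloring of the bullets of the associated GT pattern along the same indexing. Two things then have to be verified: (a) the admissibility conditions defining $\Sigma(m,w,w')$ in \Cref{def:cells} translate precisely to the defining conditions of $\GGT(\lambda+\rho,w,w')$, and (b) the root-wise factors $\G(\textbf{m},\alpha;q)$ from \Cref{thm:miw} match the local per-cell contributions assigned to the colored GT pattern.

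Next I would construct the bijection $\GGT \leftrightarrow \mathfrak{S}$. Here the uncolored correspondence sends each row of a GT pattern to a row of the lattice model by placing a $-$-spin on the horizontal edge in column $c$ precisely when $c$ appears as an entry of that row of the pattern; in the colored enhancement, the color attached to a bullet in the GT pattern becomes the color of the corresponding path passing through that edge. The admissibility conditions of the lattice model (the list of allowed vertex configurations) then correspond exactly to the pair-of-rows inequalities built into $\GGT(\lambda+\rho,w,w')$, and the Boltzmann weight of each vertex matches the per-cell factor contributing to $\G(\textbf{m},\alpha;q)$; weight preservation follows by taking the product over all vertices.

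The hard part will be the color bookkeeping in the first bijection. The colorings in $\Sigma(m,w,w')$ arise from the explicit Iwahori decomposition of $uw'$ given by \Cref{thm:iwahoridecomposition} and record, root by root, which Weyl element is selected at each step of \Cref{lem:iwahoridecompositionstep}. To translate this data into colorings of a GT pattern one has to carefully track how the step-by-step factorization interacts with the diagonal-by-diagonal reading that converts Lusztig parameters into GT entries, in particular making sure that compositions of the minimal-length factorizations cohere with the total order on positive roots coming from $\textbf{i}$. Once this translation is pinned down, the matching of the admissibility constraints and of the weights reduces to a finite local case-check at each Gamma- and Delta-cell, after which the theorem follows.
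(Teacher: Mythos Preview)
Your overall structure matches the paper's: the bijection $\LLu \leftrightarrow \GGT$ is established first via the standard differences map $m_{i,j} = a_{i,j+1} - a_{i,j}$ (Lemma~\ref{lem:bijectionlusztiggt}), and then $\GGT \leftrightarrow \mathfrak{S}$ is established by reading vertical edges of a state as the entries of a GT-pattern (Lemma~\ref{lem:gtstatesbijection}), with weight-preservation checked vertex-by-vertex. Your description of the second bijection is essentially what the paper does.

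However, you have misjudged where the work lies. You call the color bookkeeping in the first bijection the ``hard part,'' but in the paper this step is \emph{definitional}: colored GT-patterns are \emph{defined} by transporting the coloring of a colored Lusztig datum along the uncolored bijection (``color each entry $a_{i,j}$ in the same color as $m_{i,j}$''), so there is nothing to verify about admissibility or weights --- the GT contributions are simply the Lusztig contributions rewritten. Lemma~\ref{lem:coloring} already packages the coloring data in a row-by-row form adapted to GT patterns, so no delicate tracking of \Cref{lem:iwahoridecompositionstep} against the diagonal reading is needed. Two smaller points: the reduced word in the paper is the delta word $\Delta = (r,r-1,r,\dots,1,2,\dots,r)$, not $(1,2,1,3,2,1,\dots)$; and the bijection to lattice states is only with \emph{strict} colored GT-patterns $\SGGT$ (non-strict ones have weight zero, and strictness is precisely what resolves the ambiguity when three equal entries meet), a subtlety your proposal does not address.
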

    
    This result connects our approach to colored lattice models and supersymmetric lattice models dual to ones introduced in \cite{BBBG19} and \cite{BBBG20}. We note that the alternative choice of the long word decomposition yields the exact lattice models. By combining two models, one can obtain the standard results like the Cauchy identities and duality for the supersymmetric ice. 
    
    \begin{remark}
        Note that the Boltzmann weights for the lattice models are given by $p$-adic integrals over the corresponding cells. A posteriori, the lattice model is integrable and satisfies the Yang-Baxter equation. In other words, this approach allows one to come up with solvable lattice models and Boltzmann weights without ``guessing'' the weights.
    \end{remark}
    
    We finally remark that our approach can be adapted in other settings. Instead of the general linear group, one can choose any other split reductive group and its covers. Instead of the delta word decomposition, one can pick the gamma word decomposition to obtain combinatorics matching colored lattice models from \cite{BBBG19, BBBG20}. The main difficulty lies in the explicit expression of the character $\psi_\lambda$ in coordinates which is the reason why we need to pick a ``good'' word decomposition.  
    
    We use notation $\N = \Z_{\geq 0}$.
    
    \bigskip
    
    \textbf{Acknowledgements.} We thank Dan Bump for being a great advisor and his support throughout the project. We thank Alexei Borodin and Ben Brubaker for their helpful comments on the presentation of the paper. We thank Claire Frechette for the discussion about the general metaplectic covers. Thank you! 
    
    \section{Explicit Iwahori Decomposition}\label{section:decomposition}
    In Section 4 of \cite{McN11}, McNamara gives an explicit Iwasawa decomposition for the opposite maximal unipotent subgroup of a metaplectic cover of a split reductive group. We extend his results to get an explicit \textit{Iwahori} decomposition. We loosely follow the exposition of Section 4 of \cite{McN11}. 
    
    Let $G$ be a split reductive group over a non-archumedian local field $F$. Let $\widetilde{G}$ be an $n$-fold metaplectic cover of $G$. We use notation from \Cref{section:metaplecticgroups}, \ref{section:iwahorifunctions}: ring of integers $\OO_F$, maximal ideal $\p$, torus $\widetilde{T}$, Borel subgroup $\widetilde{B}$, maximal unipotent subgroup $U^{+}$ and its opposite $U = U^-$, and the opposite Iwahori subgroup $J$. And from \Cref{section:generators}: positive roots $\Phi^+$, coroots $\Phi^\vee$, the Weyl group $W$, simple reflections $s_i$, and generators $e_\alpha$ and $h_\alpha$.
    
    Let $\textbf{i} = (i_1, \dots, i_N)$ be a $N$-tuple of indices such that $w_0 = s_{i_1}s_{i_2}\dots s_{i_N}$ is a reduced decomposition of the long word $w_0 \in W$. The choice of $\textbf{i}$ defines a total ordering $ <_{\textbf{i}} $ on the set of positive roots given by
    \[
    \Phi^+ = \{\gamma_1, \gamma_2, \dots, \gamma_{N}\}, \quad \gamma_1 <_{\textbf{i}} \gamma_2 <_{\textbf{i}} \dots <_{\textbf{i}} \gamma_{N},
    \]
    where $\gamma_j = s_{i_N}s_{i_{N-1}}\dots s_{i_{j+1}}\alpha_{i_j}$. The proof can be found in any Lie theory text, for example \cite{B68}[Ch VI, §6].
    
    For each $k = 0, 1, 2, \dots, N$, let $G_k$ denote the set of elements $g \in \widetilde{G}$ which can be expressed in the form
    \[
        g = \left(\prod_{j=1}^{k-1}e_{-\gamma_j}(t_j)\right)\left(\prod_{j=k+1}^{N}e_{\gamma_j}(t_j)\right), \quad \text{all $t_j \in F$}.
    \]
    Note that $G_0 = U^+$. It slightly differs from $G_k$ from Section 4 of \cite{McN11} as we don't include the torus element in our definition.
    
    \begin{lemma}[Lemma 4.3 from \cite{McN11}]\label{lem:commute}
        For all $z \in F$ and $g \in G_k$, there exists a unique $g' \in G_k$ such that 
        \[
            e_{-\gamma_k}(z)g = g'e_{-\gamma_k}(z).
        \]
    \end{lemma}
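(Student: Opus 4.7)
Uniqueness of $g'$ is immediate from the equation $g' = e_{-\gamma_k}(z) g e_{-\gamma_k}(z)^{-1}$, which determines $g'$ as an element of $\widetilde{G}$ regardless of $G_k$, so the real content of the lemma is existence: one must show that this conjugate of $g$ can again be written in the form defining $G_k$. My plan is to move $e_{-\gamma_k}(z)$ through the factors of $g$ from left to right, one factor at a time, using the Chevalley commutator formula. The key combinatorial input is the convexity of the ordering $<_{\textbf{i}}$ on $\Phi^+$ recalled just before the lemma: if $\alpha, \beta \in \Phi^+$ and $i\alpha + j\beta$ is a root for some $i, j \geq 1$, then $i\alpha + j\beta$ lies strictly between $\alpha$ and $\beta$ in the ordering $<_{\textbf{i}}$. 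This will guarantee that every root vector produced by a commutator step is again of the form $e_{\pm \gamma_l}$ with an index $l$ that keeps the new factor inside $G_k$.

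Two cases arise. When $e_{-\gamma_k}(z)$ passes a factor $e_{-\gamma_j}(t_j)$ with $j < k$, the Chevalley commutator produces new factors supported on roots of the shape $-i\gamma_k - j'\gamma_j$ with $i, j' \geq 1$; each such root, if it is a root at all, equals $-\gamma_l$ with $j < l < k$ by convexity, so $e_{-\gamma_l}(\cdot)$ lies in the ``left part'' of $G_k$. When $e_{-\gamma_k}(z)$ passes $e_{\gamma_j}(t_j)$ with $j > k$, the commutator produces factors on $-i\gamma_k + j'\gamma_j$: if the resulting root is positive, it equals some $\gamma_l$ and convexity applied to $\gamma_k + \gamma_l = \gamma_j$ forces $l > k$, placing $e_{\gamma_l}(\cdot)$ in the ``right part''; if it is negative, it equals $-\gamma_l$ and convexity applied to $\gamma_j + \gamma_l = \gamma_k$ forces $l < k$, placing $e_{-\gamma_l}(\cdot)$ in the ``left part''. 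After one such move, the newly inserted factors can be re-sorted into the order prescribed by $\textbf{i}$ by further commutators whose outputs again stay inside $G_k$ by the same convexity argument, applied now to pairs lying on the same side of $\gamma_k$.

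Iterating this process terminates because each passage strictly reduces the number of original factors sitting to the left of $e_{-\gamma_k}(z)$, producing the desired $g' \in G_k$. Because the metaplectic cover $\widetilde{G} \to G$ splits over unipotent subgroups, all the Chevalley commutator relations used lift verbatim from $G$ to $\widetilde{G}$, so no cocycle terms intrude. The main obstacle I expect is the bookkeeping of the iteration — especially the re-sorting step and, in the non-simply-laced case, the higher-order contributions $e_{i\alpha + j\beta}$ with $i + j > 2$ that the Chevalley formula allows — but since convexity of $<_{\textbf{i}}$ holds uniformly for all positive integer combinations of roots, the same argument covers those terms as well.
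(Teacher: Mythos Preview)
Your argument is correct and is essentially the content of McNamara's original proof, which the paper simply cites rather than reproduces: the paper's entire proof is the one-line observation that in McNamara's Lemma~4.3 the argument $z$ only changes when $e_{-\gamma_k}(z)$ crosses a torus element, and since the present definition of $G_k$ omits the torus factor, $z$ survives unchanged. So you have supplied the underlying mechanism (Chevalley commutators plus convexity of the reflection ordering $<_{\textbf{i}}$) that the paper takes for granted by reference; there is no substantive difference in approach, only in level of detail.

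One small remark on your write-up: in the mixed-sign case $e_{-\gamma_k}(z)$ past $e_{\gamma_j}(t_j)$ with $j>k$, the clean way to phrase the convexity input for arbitrary $i,j'\geq 1$ is via biconvexity of the sets $\{\gamma_1,\dots,\gamma_{k-1}\}$ and $\{\gamma_{k+1},\dots,\gamma_N\}$ (each is the inversion set of a Weyl group element coming from a subword of $\textbf{i}$, hence closed under taking positive-integer root combinations that are again roots). This covers the non-simply-laced higher-order terms uniformly without the ad hoc ``$\gamma_k+\gamma_l=\gamma_j$'' rewriting, which as stated only literally applies to the $i=j'=1$ case.
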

    \begin{proof}
        In the proof of Lemma 4.3 from \cite{McN11} the argument $z$ changes only when crossing a torus element which we don't have in our definition of $G_k$.
    \end{proof}
    
    Before we give an explicit Iwahori decomposition, we need a technical result which is an extension of Algorithm 4.4 of \cite{McN11} to the Iwahori subgroup.
    
    \begin{proposition}\label{lem:iwahoridecompositionstep}
        Let $u \in U$ and $w' \in W$. Write $u \in U$ as $u = e_{-\gamma_1}(x_1)\dots e_{-\gamma_N}(x_N)$ for unique $x_1,\dots,x_N \in F$. Then there exist coordinates $y_1,y_2,\dots,y_{N}$, shifts $t_1,t_2,\dots,t_N$, and elements $\{p_k,h_k,\pi_k,j_k\}_{k=1}^{N+1}$ with $p_k \in e_{-\gamma_{k-1}}(t_{k-1})G_{k-1}$, $h_k \in \widetilde{T}$, $\pi_k \in W$, and $j_k \in J$, such that for each $k=1,2,\dots,N+1$ we have
        \[
            uw' = \left(\prod_{j=1}^{k-1}e_{-\gamma_j}(x_j)\right)p_k h_k \pi_k j_k,
        \]
        and explicit expressions
        \[
            h_k = \prod_{j=N}^{k}\begin{cases}
                h_{\gamma_j}(y_j^{-1}), \quad &\text{if $y_k \not\in \OO$ or if ($y_k \in \OO^\times$ and $\pi_{k+1}^{-1}(\gamma_k) \in \Phi^-$)} \\
                1, \quad &\text{if $y_k \in \p$ or if ($y_k \in \OO^\times$ and $\pi_{k+1}^{-1}(\gamma_k) \in \Phi^+$)}
            \end{cases},
        \]
        
        \[
            \pi_k = \prod_{j=k}^{N}\begin{cases}
                s_{\gamma_j}, &\text{if $y_k \not\in \OO$ or if ($y_k \in \OO^\times$ and $\pi_{k+1}^{-1}(\gamma_k) \in \Phi^-$)} \\
                1, \quad &\text{if $y_k \in \p$ or if ($y_k \in \OO^\times$ and $\pi_{k+1}^{-1}(\gamma_k) \in \Phi^+$)}
            \end{cases},
        \]
        \[
            j_k = \prod_{j=k}^{N}\begin{cases}
                e_{\sigma_{j+1}^{-1}(\gamma_j)}(y_j^{-1}), \quad &\text{if $y_k \not\in \OO$ or if ($y_k \in \OO^\times$ and $\pi_{k+1}^{-1}(\gamma_k) \in \Phi^-$)} \\
                e_{-\sigma_{j+1}^{-1}(\gamma_j)}(y_j), \quad &\text{if $y_k \in \p$ or if ($y_k \in \OO^\times$ and $\pi_{k+1}^{-1}(\gamma_k) \in \Phi^+$)}
            \end{cases}.
        \]
        \[
            y_k = (x_k+t_k)\prod_{j=k+1}^{N}\begin{cases}
                y_j^{-\langle \gamma_k, \gamma_j^\vee \rangle}, \quad &\text{if $y_k \not\in \OO$ or if ($y_k \in \OO^\times$ and $\pi_{k+1}^{-1}(\gamma_k) \in \Phi^-$)}\\
                1, \quad &\text{if $y_k \in \p$ or if ($y_k \in \OO^\times$ and $\pi_{k+1}^{-1}(\gamma_k) \in \Phi^+$)} \\
            \end{cases}
        \]
    \end{proposition}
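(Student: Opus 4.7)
The plan is to proceed by descending induction on $k$, from $k = N+1$ down to $k = 1$. The base case is immediate: set $p_{N+1} = 1 \in e_{-\gamma_N}(0) \cdot G_N$, $h_{N+1} = 1$, $\pi_{N+1} = w'$, and $j_{N+1} = 1$. Since $\prod_{j=1}^{N} e_{-\gamma_j}(x_j) = u$, the required equality $uw' = u \cdot 1 \cdot 1 \cdot w' \cdot 1$ holds trivially, and the explicit product formulas for $h_{N+1}, \pi_{N+1}, j_{N+1}, y_{N+1}$ reduce to empty products consistent with the claimed conventions.

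For the inductive step from $k+1$ to $k$, write $p_{k+1} = e_{-\gamma_k}(t_k) g$ with $g \in G_k$, so that
\[
e_{-\gamma_k}(x_k) \cdot p_{k+1} h_{k+1} \pi_{k+1} j_{k+1} = e_{-\gamma_k}(x_k + t_k) \cdot g h_{k+1} \pi_{k+1} j_{k+1}.
\]
Invoke \Cref{lem:commute} to commute $g$ past $e_{-\gamma_k}(x_k + t_k)$, producing some $g' \in G_k$, and then conjugate $e_{-\gamma_k}(x_k + t_k)$ past $h_{k+1}$. The latter rescales the argument by the product of $y_j^{-\langle \gamma_k, \gamma_j^\vee \rangle}$ over those $j > k$ for which $h_{k+1}$ actually contains an $h_{\gamma_j}$ factor, i.e.\ those already in the Bruhat-trick case; the rescaled argument is exactly $y_k$.

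At this point, split into cases on $y_k$. In the Bruhat-trick case ($y_k \notin \OO$, or $y_k \in \OO^\times$ with $\pi_{k+1}^{-1}(\gamma_k) \in \Phi^-$), apply the rank-one identity
\[
e_{-\gamma_k}(y_k) = e_{\gamma_k}(y_k^{-1})\, h_{\gamma_k}(y_k^{-1})\, s_{\gamma_k}\, e_{\gamma_k}(y_k^{-1}).
\]
Push the leading $e_{\gamma_k}(y_k^{-1})$ into $g'$ via Chevalley commutator relations to obtain an element of the coset $e_{-\gamma_{k-1}}(t_{k-1}) G_{k-1}$ (this defines the new shift $t_{k-1}$), yielding $p_k$. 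The torus factor $h_{\gamma_k}(y_k^{-1})$ joins $h_{k+1}$ to form $h_k$, the reflection $s_{\gamma_k}$ prepends $\pi_{k+1}$ to form $\pi_k$, and the trailing $e_{\gamma_k}(y_k^{-1})$, transported through $\pi_{k+1}$, becomes $e_{\pi_{k+1}^{-1}(\gamma_k)}(\pm y_k^{-1})$; by the sign condition this lies in $J$, and we absorb it into $j_{k+1}$ to form $j_k$. In the absorption case ($y_k \in \p$, or $y_k \in \OO^\times$ with $\pi_{k+1}^{-1}(\gamma_k) \in \Phi^+$), commute $e_{-\gamma_k}(y_k)$ directly through $\pi_{k+1}$ to obtain $e_{-\pi_{k+1}^{-1}(\gamma_k)}(\pm y_k)$, which lies in $J$ in both subcases since $J$ contains $e_{-\alpha}(\OO)$ and $e_{\alpha}(\p)$ for $\alpha \in \Phi^+$; absorb this into $j_{k+1}$ while leaving $h_k = h_{k+1}$ and $\pi_k = \pi_{k+1}$.

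The main technical obstacle is verifying that pushing $e_{\gamma_k}(y_k^{-1})$ past the $e_{-\gamma_j}(\cdot)$ factors of $g'$ (in particular past $e_{-\gamma_{k-1}}(\cdot)$) genuinely produces an element of the coset $e_{-\gamma_{k-1}}(t_{k-1}) G_{k-1}$: the Chevalley commutators at roots of the form $i\gamma_k - j\gamma_\ell$ for $\ell \leq k-1$ must stay within the product structure allowed by $G_{k-1}$. This follows from the convexity of the order $<_{\textbf{i}}$ and the fact that $\textbf{i}$ is a reduced word for $w_0$. Once each step is verified, the closed-form expressions for $h_k, \pi_k, j_k,$ and $y_k$ stated in the proposition follow by concatenating the per-step contributions for indices $j = N, N-1, \dots, k$.
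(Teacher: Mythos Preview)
Your proof is correct and follows essentially the same descending-induction scheme as the paper: the paper likewise initializes at $k=N+1$, peels off $e_{-\gamma_k}(x_k)$, invokes \Cref{lem:commute} to get $p_k'' e_{-\gamma_k}(x_k+t_k)$, conjugates past $h_{k+1}$ to produce $y_k$, and then branches on the same two cases. The only cosmetic difference is the form of the rank-one identity: the paper uses $e_{-\gamma}(z)=h_\gamma(z^{-1})e_\gamma(z)s_\gamma e_\gamma(z^{-1})$ rather than your $e_{\gamma}(z^{-1})h_\gamma(z^{-1})s_\gamma e_\gamma(z^{-1})$, but these are equivalent via $h_\gamma(z^{-1})e_\gamma(z)h_\gamma(z^{-1})^{-1}=e_\gamma(z^{-1})$, and the resulting definitions of $p_k,h_k,\pi_k,j_k$ coincide. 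Your explicit remark that the convexity of $<_{\mathbf i}$ is what guarantees $p_k\in e_{-\gamma_{k-1}}(t_{k-1})G_{k-1}$ is a point the paper simply asserts without comment.
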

    \begin{proof}
        By decreasing induction on $k$. For $k= N+1$, we have $p_{N+1} = 1$, $h_{N+1} = 1$, $\pi_{N+1} = w'$, $t_{N} = 0$, and $j_{N+1} = 1$. Next, suppose 
        \[
            uw' = \left(\prod_{j=1}^{k}e_{-\gamma_j}(x_j)\right)p_{k+1} h_{k+1} \pi_{k+1} j_{k+1}. 
        \]
        We take the last term $e_{-\gamma_k}(x_k)$ in the product and commute it with elements on the right. We write $p_{k+1} = e_{-\gamma_k}(t_k)p_{k}'$ for some $p_{k}' \in G_k$. By \Cref{lem:commute},
        \[
            e_{-\gamma_k}(x_k)p_{k+1} = e_{-\gamma_k}(x_k+t_k)p_{k}' = p_{k}''e_{-\gamma_k}(x_k+t_k),
        \]
        for some new $p_{k}'' \in G_{k}$. Next, commute with $h_{k+1}$ to get $e_{-\gamma_k}(x_k+t_k)h_{k+1} = h_{k+1}e_{-\gamma_k}(y_k)$. To commute with $\pi_{k+1}$, we consider the following cases.
        
        \begin{description}
            \item[If $y_k \not\in \OO$ or if ($y_k \in \OO^\times$ and $\pi_{k+1}^{-1}(\gamma_k) \in \Phi^-$)] We apply the identity 
            \[
                e_{-\gamma}(z) = h_{\gamma}(z^{-1})e_{\gamma}(z)s_{\gamma}e_{\gamma}(z^{-1}).
            \]
            to $e_{-\gamma_k}(y_k)$ and get 
            \[
                e_{-\gamma_k}(y_k)\pi_{k+1} = h_{\gamma}(y_k^{-1})e_{\gamma}(y_k)s_{\gamma_k}e_{\gamma_k}(y_k^{-1})\pi_{k+1} = h_{\gamma_k}(y_k^{-1})e_{\gamma_k}(y_k)s_{\gamma_k}\pi_{k+1}e_{\pi_{k+1}^{-1}(\gamma_k)}(y_k^{-1}).
            \]
            
            Let $h_{k+1}h_{\gamma_k}(y_k^{-1})e_{\gamma_k}(y_k)(h_{k+1}h_{\gamma_k}(y_k^{-1}))^{-1} = e_{\gamma_k}(b)$, then we define 
            \begin{align*}
                p_k &= p_{k}''e_{\gamma_k}(b) \in e_{-\gamma_{k-1}}(t_{k-1})G_{k-1} \\
                h_{k} &= h_{k+1}h_{\gamma_k}(y_k^{-1}) \in \widetilde{T} \\
                \pi_k &= s_{\gamma_{k}}\pi_{k+1} \in W \\
                j_k &= e_{\sigma_{k+1}^{-1}(\gamma_k)}(y_k^{-1})j_{k+1} \in J.
            \end{align*}
            
            \item[If $y_k \in \p$ or if ($y_k \in \OO^\times$ and $\pi_{k+1}^{-1}(\gamma_k) \in \Phi^+$)] Then $e_{-\gamma_k}(y_k)\pi_{k+1} = \pi_{k+1}e_{-\pi_{k+1}^{-1}(\gamma_k)}(y_k)$. 
            
            Then we define 
            \begin{align*}
                p_k &= p_{k}'' \in G_{k-1} \\
                h_{k} &= h_{k+1} \in \widetilde{T}\\
                \pi_k &= \pi_{k+1} \in W \\
                j_k &= e_{-\pi_{k+1}^{-1}(\gamma_k)}(y_k)j_{k+1} \in J.
            \end{align*}
        \end{description}
        In any case, the induction step is successful, and we are done. 
    \end{proof}
    
    \begin{theorem}[Explicit Iwahori Decomposition]\label{thm:iwahoridecomposition}
        Let $u \in U$ and $w' \in W$. Then there is an explicit Iwahori decomposition $uw' = p_1h_1\pi_1j_1 \in U^+\widetilde{T}WJ$ with $p_1 \in U^+$, $h_1 \in \widetilde{T}$, $\pi_1 \in W$, $j_1 \in J$ given by \Cref{lem:iwahoridecompositionstep}.
    \end{theorem}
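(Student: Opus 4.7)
My plan is to derive this theorem directly as the $k=1$ specialization of \Cref{lem:iwahoridecompositionstep}. That proposition is proved by decreasing induction starting from $k=N+1$, at which stage $p_{N+1}=h_{N+1}=j_{N+1}=1$ and $\pi_{N+1}=w'$; each inductive step strips off the leftmost remaining negative-root factor $e_{-\gamma_k}(x_k)$, commutes it past $p_{k+1}h_{k+1}\pi_{k+1}j_{k+1}$ via \Cref{lem:commute}, and absorbs the resulting terms according to the case analysis on whether $y_k$ lies in $\OO^\times$ and whether $\pi_{k+1}^{-1}(\gamma_k)$ is positive or negative. Once the induction reaches $k=1$, the prefix product $\prod_{j=1}^{0}e_{-\gamma_j}(x_j)$ is empty and the identity reduces to exactly $uw' = p_1 h_1 \pi_1 j_1$.

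It remains to confirm the four subgroup memberships. The relations $h_1 \in \widetilde{T}$, $\pi_1 \in W$, and $j_1 \in J$ are stated verbatim in \Cref{lem:iwahoridecompositionstep}. The only point still needing attention is $p_1 \in U^+$: the proposition asserts more generally that $p_k \in e_{-\gamma_{k-1}}(t_{k-1})G_{k-1}$, but at the terminal value $k=1$ there is no previous root $\gamma_0$ and no shift $t_0$ is ever introduced (the shifts $t_{k-1}$ arise only when a subsequent inductive step peels $e_{-\gamma_{k-1}}(t_{k-1})$ off of $p_k$, and no such step occurs after $k=1$). Hence $p_1 \in G_0 = U^+$, as required.

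The real mathematical content sits in \Cref{lem:iwahoridecompositionstep}; the present theorem is a repackaging of that result. The one subtlety worth double-checking is the boundary collapse at $k=1$ just described, which is what turns the coset statement $p_k \in e_{-\gamma_{k-1}}(t_{k-1}) G_{k-1}$ into the clean membership $p_1 \in U^+$. Once this is noted, the theorem follows with no further calculation.
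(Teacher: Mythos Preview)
Your proof is correct and matches the paper's own argument exactly: the paper simply says ``Set $k = 1$ in \Cref{lem:iwahoridecompositionstep} and notice that $p_1 \in G_0 = U^+$.'' Your additional explanation of why the boundary case $k=1$ collapses the coset $e_{-\gamma_{k-1}}(t_{k-1})G_{k-1}$ to $G_0$ is a welcome elaboration of a point the paper leaves implicit.
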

    \begin{proof}
        Set $k = 1$ in \Cref{lem:iwahoridecompositionstep} and notice that $p_1 \in G_0 = U^+$.
    \end{proof}
    
    We can also reformulate the result in terms of simple reflections. We favor this reformulation because it is easier to give a combinatorial description in terms of simple reflections.
    
    \begin{lemma}\label{lem:simplerelfections}
        Define 
        \[
            \sigma_{k} = \left[\prod_{j=k}^{N}\begin{cases}
                1, &\text{if $y_k \not\in \OO$ or if ($y_k \in \OO^\times$ and $\pi_{k+1}^{-1}(\gamma_k) \in \Phi^-$)} \\
                s_{i_j}, \quad &\text{if $y_k \in \p$ or if ($y_k \in \OO^\times$ and $\pi_{k+1}^{-1}(\gamma_k) \in \Phi^+$)}
            \end{cases}\right]w'.
        \]
        Then $\pi_k = \left(\prod_{j=N}^{k}s_{i_j}\right)\sigma_k$, in particular, $\pi_1 = w_0\sigma_1$. Then we can rewrite expressions for $h_k, \pi_k, j_k$ from the theorem in terms of simple reflections.
    \end{lemma}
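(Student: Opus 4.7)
The plan is to proceed by backward induction on $k$, from $k=N+1$ down to $k=1$. The base case $k=N+1$ is immediate: the product $\prod_{j=N}^{N+1} s_{i_j}$ is empty and $\sigma_{N+1} = w' = \pi_{N+1}$. For the inductive step, the key ingredient is the definition of the order on positive roots from the beginning of \Cref{section:decomposition}: $\gamma_k = s_{i_N} s_{i_{N-1}} \cdots s_{i_{k+1}} \alpha_{i_k}$. Abbreviating $w_{>k} := s_{i_N} \cdots s_{i_{k+1}}$, this yields the conjugation identity $s_{\gamma_k} = w_{>k}\, s_{i_k}\, w_{>k}^{-1}$.

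Assuming inductively that $\pi_{k+1} = w_{>k}\,\sigma_{k+1}$, I will split into the two cases of \Cref{lem:iwahoridecompositionstep}. In case 1, $\pi_k = s_{\gamma_k}\pi_{k+1}$ and $\sigma_k = \sigma_{k+1}$ (no factor is inserted at $j=k$), so
\[
    \pi_k = (w_{>k} s_{i_k} w_{>k}^{-1})(w_{>k} \sigma_{k+1}) = w_{>k} s_{i_k} \sigma_k = w_{>(k-1)} \sigma_k.
\]
In case 2, $\pi_k = \pi_{k+1}$ and $\sigma_k = s_{i_k}\sigma_{k+1}$, so $w_{>(k-1)}\sigma_k = w_{>k} s_{i_k} s_{i_k} \sigma_{k+1} = w_{>k} \sigma_{k+1} = \pi_k$. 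The conclusion $\pi_1 = w_0 \sigma_1$ follows from the full telescoping: $w_{>0} = s_{i_N}\cdots s_{i_1} = w_0^{-1} = w_0$, since the longest element is an involution.

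The rewritings of $h_k$ and $j_k$ will follow by exactly the same telescoping argument after inserting the analogous conjugation identities $h_{\gamma_j}(t) = w_{>j}\, h_{\alpha_{i_j}}(t)\, w_{>j}^{-1}$ and $e_{\pm\gamma_j}(t) = w_{>j}\, e_{\pm\alpha_{i_j}}(t)\, w_{>j}^{-1}$ into the product expressions of \Cref{lem:iwahoridecompositionstep}, and combining them with the already-proved factorization of $\pi_k$ (which is what produces the $\pi_{j+1}^{-1}$ appearing inside $j_k$). The main subtlety I anticipate is that in the metaplectic cover these root-subgroup conjugations can pick up cocycle scalars; however, because each lifted Weyl element appears paired with its inverse at every step of the telescoping, those cocycles cancel, leaving the stated identities in terms of simple reflections.
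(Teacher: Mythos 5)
Your backward induction establishing $\pi_k = w_{>(k-1)}\sigma_k$ is correct and takes the same route as the paper: base case at $k=N+1$, the conjugation identity $s_{\gamma_k} = w_{>k}\,s_{i_k}\,w_{>k}^{-1}$, and the case split from \Cref{lem:iwahoridecompositionstep}. The telescoping to $\pi_1 = w_0\sigma_1$ is also right.

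Where you go off-track is the last paragraph, on rewriting $h_k,j_k$ ``in terms of simple reflections.'' You reach for conjugation identities of root subgroups $e_{\pm\gamma_j}$ and $h_{\gamma_j}$ and worry about cocycle scalars in the cover, but that is not what this rewriting is about and no such cocycle bookkeeping is needed. The entire mechanism is the one-line Weyl-group computation that the paper records: since $\gamma_k = w_{>k}\alpha_{i_k}$ and you have just proved $\pi_{k+1} = w_{>k}\sigma_{k+1}$, it follows immediately that
\[
\pi_{k+1}^{-1}(\gamma_k) \;=\; \sigma_{k+1}^{-1}\,w_{>k}^{-1}\,w_{>k}\,\alpha_{i_k} \;=\; \sigma_{k+1}^{-1}(\alpha_{i_k}).
\]
This is precisely the identity that converts the case conditions in \Cref{lem:iwahoridecompositionstep} (which test whether $\pi_{k+1}^{-1}(\gamma_k)$ lies in $\Phi^{\pm}$) into conditions on $\sigma_{k+1}^{-1}(\alpha_{i_k})$, and hence into statements about simple reflections; it is what justifies the coloring rules of \Cref{coloringsDefinition}. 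Your proposal, as written, does not produce this identity and instead posits a different (and unnecessary) mechanism, so the final sentence of the lemma is not actually established by your argument.
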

    \begin{proof}
        Recall that $\gamma_{k} = s_{i_N}s_{i_{N-1}}\dots s_{i_{k+1}}\alpha_{i_k}$, and so 
		\[
		s_{\gamma_{k}} = (s_{i_N}s_{i_{N-1}}\dots s_{i_{k+1}})s_{i_k}(s_{i_{k+1}}^{-1}\dots s_{i_{N-1}}^{-1} s_{i_N}^{-1}).
		\]
		Note that $s_{i_N}s_{i_{N-1}}\dots s_{i_{k}} = s_{\gamma_{k}}s_{\gamma_{k+1}}\dots s_{\gamma_N}$. 
		
		We prove $\pi_k = \left(\prod_{j=N}^{k}s_{i_j}\right)\sigma_{k}$ by decreasing induction on $k$. For $k= N+1$ we have $\pi_{N+1} = \sigma_{N+1} = w'$. Next, if $\pi_{k+1} = \left(\prod_{j=N}^{k+1}s_{i_j}\right)\sigma_{k+1}$, we consider two cases. 
		
		If $y_k \in \p$ or if ($y_k \in \OO^\times$ and $\pi_{k+1}^{-1}(\gamma_k) \in \Phi^+$), we get 
		\[
		    \pi_{k} = \pi_{k+1} = \left(\prod_{j=N}^{k+1}s_{i_j}\right)\sigma_{k+1} = \left(\prod_{j=N}^{k}s_{i_j}\right)\sigma_{k}.
		\]
		
		If $y_k \not\in \OO$ or if ($y_k \in \OO^\times$ and $\pi_{k+1}^{-1}(\gamma_k) \in \Phi^-$), we get 
		\[
		    \pi_{k} = s_{\gamma_{k}}\pi_{k+1} = s_{\gamma_k}\left(\prod_{j=N}^{k+1}s_{i_j}\right)\sigma_{k+1} = \left(\prod_{j=N}^{k}s_{i_j}\right)\sigma_{k}.
		\]
		We also note that
		\[
		    \pi_{k+1}^{-1}(\gamma_k) = \left(\sigma_{k+1}^{-1}\prod_{j={k+1}}^{N}s_{i_j}\right)\left(s_{i_N}s_{i_{N-1}}\dots s_{i_{k+1}}\alpha_{i_k}\right) = \sigma_{k+1}^{-1}(\alpha_{i_k}).
		\]
		Now all the data can be written in terms of the simple reflections. 
    \end{proof}
    
    Now we extract the combinatorial essence from the results above. We want to describe all possible sequences of $\sigma_1, \sigma_2, \dots, \sigma_{N+1}$ that can occur in the explicit Iwahori decomposition. We drop variables $y_k$ and keep only their valuations $m_k = \max(\val(y_k^{-1}), 0)$, in other words, if $y_k \not\in \OO$, we take the valuation of its inverse, and if $y_k \in \OO$, we set $m_k$ to be zero. 
    
    \begin{definition}\label{coloringsDefinition}
    	Let $m \in \N^N$ and let $w, w' \in W$. An $(m,w,w')-$\textit{coloring} is a sequence $(\sigma_1, \dots, \sigma_{N+1}) \in W^{N+1}$ with $\sigma_{1} = w$, $\sigma_{N+1} = w'$, and 
    	\begin{equation}\label{coloringRules}
    		\sigma_k = \begin{cases}
    			\sigma_{k+1}, &\text{if $m_k > 0$}\\
    			s_{i_k} \sigma_{k+1}, &\text{if $m_k = 0$ and $\sigma_{k+1}^{-1}(\alpha_{i_k}) \in \Phi^+$}\\
    			\text{$\sigma_{k+1}$ or $s_{i_k}\sigma_{k+1}$}, &\text{if $m_k = 0$ and $\sigma_{k+1}^{-1}(\alpha_{i_k}) \in \Phi^-$}
    		\end{cases}.
    	\end{equation}
    	Denote by $\Sigma(m,w,w')$ the set of $(m,w,w')$-colorings. 
    \end{definition}
    
    We call $w'$ the \textit{input}, and $w$ the \textit{output} of the coloring. 
    
    Informally, start with $w' \in W$ and then read $(m_1,m_2,\dots,m_N)$ from right to left. At each entry, update the current element. If $m_k > 0$, skip it. If $m_k = 0$, multiply by $s_{i_k}$ if applying the inverse of the current element to $\alpha_{i_k}$ lies in $\Phi^+$. Otherwise, either multiply or skip it. Then $\Sigma(m,w,w')$ is the set of such sequences that end with $w \in W$. Note that $\Sigma(m,w,w')$ may be empty.
    
    \begin{remark}
        Note that the coloring depends on the vanishing of entries of $m$, that is, whether an entry is zero or not, but not on the non-zero values of $m_k$'s.
    \end{remark}
    
    \begin{example}
        Let $G = \GL_3$ and let $\textbf{i} = (2,1,2)$. Then the positive roots are
        \[
            \Phi^+ = \{(1,2), (1,3), (2,3) \} \text{ with } (1,2) <_\textbf{i} (1,3) <_\textbf{i} (2,3).
        \] 
        Let $w' = 1$ be the input. Here are examples of different $(m,w,w')$-colorings.
        
        \begin{description}
            \item[$m = (0,0,0)$] $(s_2s_1s_2,s_1s_2,s_2,1)$ has output $s_2s_1s_2 = w_0$
            \item[$m = (0,0,1)$] $(s_2s_1,s_1,1,1)$ has output $s_2s_1$
            \item[$m = (0,1,0)$] $(1,s_2,s_2,1)$ has output $1$ and $(s_2,s_2,s_2,1)$ has output $s_2$
            \item[$m = (0,1,1)$] $(s_2,1,1,1)$ has output $s_2$
            \item[$m = (1,0,0)$] $(s_1s_2,s_1s_2,s_2,1)$ has output $s_1s_2$
            \item[$m = (1,0,1)$] $(s_2s_1,s_1,1,1)$ has output $s_2s_1$
            \item[$m = (1,1,0)$] $(s_2,s_2,s_2,1)$ has output $s_2$
            \item[$m = (1,1,1)$] $(1,1,1,1)$ has output $1$.
        \end{description}
    \end{example}
    
    \begin{definition}\label{def:cells}
        For $m \in \N^N$ and a coloring $\tau \in \Sigma(m,w,w')$ we define cells $S_{m,\tau} = S_{m,\tau}^{\textbf{i}}$ by
        \[
            S_{m,\tau} = \{u \in U \mid \max(\val(y_k^{-1}),0) = m_k \text{ and } \sigma_k = \tau_k \},
        \]
        using coordinates introduced in \Cref{thm:iwahoridecomposition} and \Cref{lem:simplerelfections}.
    \end{definition}
    
    \begin{theorem}\label{thm:celldecomposition}
        The following decomposition holds.
        \[
            U \cap Bw_0wJ(w')^{-1} = \bigsqcup_{\substack{m \in \N^N \\ \sigma \in \Sigma(m,w,w')}}S_{m,\sigma}.
        \]
    \end{theorem}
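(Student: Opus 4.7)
The plan is to establish the identity by proving three claims in turn: (i) every $u$ on the left-hand side lies in some cell $S_{m,\sigma}$ with $\sigma \in \Sigma(m,w,w')$; (ii) each such cell is contained in the left-hand side; and (iii) the cells are pairwise disjoint as $(m,\sigma)$ varies.

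For (i), given $u \in U \cap \widetilde{B}w_0wJ(w')^{-1}$, I would run the algorithm of \Cref{lem:iwahoridecompositionstep} on $u$ with the chosen $w'$ and record the resulting coordinates $y_1,\ldots,y_N$ together with the Weyl elements $\sigma_1,\ldots,\sigma_{N+1}$, setting $m_k := \max(\val(y_k^{-1}),0)$. The disjointness of the decomposition $\widetilde{G} = \bigsqcup_{\pi \in W} \widetilde{B}\pi J$, applied to $uw' \in \widetilde{B}w_0 w J$, forces $\pi_1 = w_0 w$, and then $\sigma_1 = w$ follows from the relation $\pi_1 = w_0 \sigma_1$ of \Cref{lem:simplerelfections}. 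To check that $\sigma = (\sigma_1,\ldots,\sigma_{N+1}) \in \Sigma(m,w,w')$, I would compare the three cases of \eqref{coloringRules} to the four sub-branches of \Cref{lem:iwahoridecompositionstep}, using the identity $\pi_{k+1}^{-1}(\gamma_k) = \sigma_{k+1}^{-1}(\alpha_{i_k})$ from the proof of \Cref{lem:simplerelfections}. The branch $y_k \notin \OO$ (i.e.\ $m_k > 0$) yields $\sigma_k = \sigma_{k+1}$; when $m_k = 0$, the two subcases $y_k \in \p$ and $y_k \in \OO^\times$ combine with the sign of $\sigma_{k+1}^{-1}(\alpha_{i_k})$ to produce exactly the second and third cases of \eqref{coloringRules}, with the third case being where both allowed outcomes actually occur (via $y_k \in \p$ versus $y_k \in \OO^\times$). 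Hence $u \in S_{m,\sigma}$ by definition of the cell.

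For (ii), if $u \in S_{m,\sigma}$ with $\sigma \in \Sigma(m,w,w')$, then by the very definition of the cell, the algorithm applied to $u$ with $w'$ reproduces $\sigma_1 = w$, so $\pi_1 = w_0 w$ and $uw' = p_1 h_1 \pi_1 j_1 \in \widetilde{B}w_0 w J$. For (iii), the algorithm is deterministic: the $y_k$'s, and hence $m_k = \max(\val(y_k^{-1}),0)$ and the $\sigma_k$'s, are uniquely determined by $u$ and $w'$, so no point can lie in two distinct cells. The main obstacle is not conceptual but bookkeeping: the alignment of the four algorithmic sub-branches of \Cref{lem:iwahoridecompositionstep} with the three cases of \eqref{coloringRules} must be done carefully, which I would tabulate explicitly to make the correspondence transparent and to confirm that no case is double-counted or missed.
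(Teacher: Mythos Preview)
Your proposal is correct and follows essentially the same approach as the paper: both directions of containment are argued exactly as you outline, using the algorithm of \Cref{lem:iwahoridecompositionstep}, the relation $\pi_1 = w_0\sigma_1$ from \Cref{lem:simplerelfections}, and the case-by-case verification that the recorded $\sigma$ satisfies \eqref{coloringRules}. The only difference is that you make disjointness (your step (iii)) explicit via determinism of the algorithm, whereas the paper leaves this implicit in the construction; this is a welcome clarification rather than a departure.
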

    \begin{proof}
        We first show that $S_{m,\sigma} \subset Bw_0wJ(w')^{-1}$. Let $u \in S_{m,\sigma}$. By definition in local coordinates of \Cref{thm:iwahoridecomposition}, we have $\pi_1 = w_0\sigma_1 = w_0w$, hence, $u \in Bw_0wJ(w')^{-1}$.
    
        Now we show that $u \in U \cap Bw_0wJ(w')^{-1}$ lies in $S_{m,\sigma}$ for some $m \in \N^N$ and coloring $\sigma \in \Sigma(m,w,w')$. Let $u \in Bw_0wJ(w')^{-1}$. We use the coordinates from \Cref{thm:iwahoridecomposition} to get explicit Iwahori decomposition to define $m$ and $\sigma$. 
        
        We set $m_k = \max(\val(y_k^{-1}), 0)$. Next, we set $\sigma_k$ to be the sigmas introduced in \Cref{lem:simplerelfections}. Note that $\pi_1 = w_0\sigma_1$ and from $u \in Bw\pi_1J(w')^{-1}$ and $u \in Bw_0\sigma_1J(w')^{-1}$ by Iwahori decomposition, we conclude that $\sigma_1 = w$. Next, $\sigma$ is indeed a $(m,w,w')$-coloring: if $m_k > 0$, then $y_k \in \p$, and so $\pi_k = \gamma_k\pi_{k+1}$ and $\sigma_k = \sigma_{k+1}$. If $m_k = 0$ and $\sigma_{k+1}^{-1}(\alpha_{i_k}) \in \Phi^+$, then $\pi_k = \pi_{k+1}$ and $\sigma_k = s_{i_k}\sigma_{k+1}$. If $m_k = 0$ and $\sigma_{k+1}^{-1}(\alpha_{i_k}) \in \Phi^-$, then we either have $\sigma_k = s_{i_k}\sigma_{k+1}$ in case $y_k \in \p$, or $\sigma_k = \sigma_{k+1}$ in case $y_k \in \OO^\times$. Therefore, $\sigma$ is a $(m,w,w')$-coloring, and we are done.
    \end{proof}
    
    \begin{remark}
        Similar to Section 7 in \cite{McN11}, we can show that the cells $S_{m,\sigma}^{\textbf{i}}$ parametrize the generalized Mirkovi\'c-Vilonen cycles in the affine flag varieties. Alternative parametrization in terms of refined alcove paths was given in \cite{PRS09} by Parkinson, Ram, and Schwer. 
    \end{remark}
    
    Write $y_k = \varpi^{-m_k}u_k$, $u_k \in \OO_F\times$ for $y_k \not\in \OO_F$ and $y_k = u_k$, $u_k \in \OO_F$ for $y_k \in \OO_F$. From the proof of \Cref{thm:celldecomposition}, we find a cell $S_{m,\sigma}$ is a set of elements in $U$ with $u_k \in D_k$, where the domains $D_{k} = D_{k}(m,\sigma)$ are defined by 
    \[
        D_k \coloneqq \begin{cases}
        	\OO^\times, &\text{if $m_k > 0$}\\
        	\OO, &\text{if $m_k = 0$, $\sigma_{k+1}^{-1}(\alpha_{i_k}) \in \Phi^+$}\\
        	\OO^\times, &\text{if $m_k = 0$, $\sigma_{k+1}^{-1}(\alpha_{i_k}) \in \Phi^-$, and $\sigma_k = \sigma_{k+1}$}\\
        	\p, &\text{if $m_k = 0$, $\sigma_{k+1}^{-1}(\alpha_{i_k}) \in \Phi^-$, and $\sigma_k = s_{i_k}\sigma_{k+1}$}
        \end{cases}.
    \]
    
    Now we give the relation to the cells introduced by McNamara in \cite{McN11}.
    
    \begin{theorem}[\cite{McN11}, Section 4]\label{thm:mcn} The following decomposition of $U$ holds. 
        \[
            U = \bigsqcup_{m \in \N^N}C_m^{\textbf{i}},
        \]
        where cells $C_m = C_m^{\textbf{i}}$ are defined in Section 4 of \cite{McN11}.
    \end{theorem}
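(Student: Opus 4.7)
The plan is to deduce this decomposition as a direct consequence of the finer Iwahori cell decomposition of \Cref{thm:celldecomposition}. McNamara's original argument in Section 4 of \cite{McN11} proceeds by direct analysis of his Iwasawa algorithm; I would instead bundle his cells out of ours by forgetting the $\sigma$-coloring and summing over the Weyl outputs.

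First I would specialize \Cref{thm:celldecomposition} to $w' = 1$ and take the union over $w \in W$. Since \Cref{thm:iwahoridecomposition} guarantees $\pi_1 \in W$, every $u \in U$ lies in $\widetilde{B} w_0 w J$ for a unique $w$, so the sets $U \cap \widetilde{B} w_0 w J$ partition all of $U$, yielding
\[
    U \;=\; \bigsqcup_{m \in \N^N} \Bigl( \bigsqcup_{w \in W} \bigsqcup_{\sigma \in \Sigma(m,w,1)} S_{m,\sigma}^{\textbf{i}} \Bigr).
\]
It then remains to identify the inner double union with McNamara's cell $C_m^{\textbf{i}}$. By \Cref{def:cells}, each $S_{m,\sigma}^{\textbf{i}}$ is the locus of $u \in U$ whose coordinates $y_1,\dots,y_N$ produced by \Cref{lem:iwahoridecompositionstep} satisfy $\max(\val(y_k^{-1}),0) = m_k$ together with the coloring constraint $\sigma_k = \tau_k$. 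Since McNamara's $C_m^{\textbf{i}}$ is carved out purely by the valuation condition on the $y_k$'s, summing over all admissible $\sigma$ and over the Weyl outputs $w$ erases precisely the $\sigma$-dependence and recovers $C_m^{\textbf{i}}$ exactly.

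The main step to verify is that the $y_k$ from our Iwahori algorithm genuinely agree with McNamara's $y_k$ from his Iwasawa algorithm; this is the expected main obstacle, but it is essentially built in. The Iwasawa decomposition is obtained from the Iwahori one by enlarging $J$ to the maximal compact $K$, and under this enlargement the inductive formulas for $y_k$ and $h_k$ and the branching cases on $\val(y_k^{-1})$ in \Cref{lem:iwahoridecompositionstep} reduce verbatim to McNamara's Algorithm 4.4. Once this compatibility is noted (or one simply cites Section 4 of \cite{McN11}), the stated decomposition follows.
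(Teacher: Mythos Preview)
The paper does not prove this theorem at all; it is stated purely as a citation of McNamara. Your route---deducing it from \Cref{thm:celldecomposition} by forgetting the coloring---is therefore genuinely different, and in fact what you are really proposing is a proof of \Cref{lem:mcnamaracellrelation}, from which \Cref{thm:mcn} is immediate.

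The outline is sound but your justification of the key compatibility is not quite right. You assert that the inductive formulas for $y_k$ in \Cref{lem:iwahoridecompositionstep} ``reduce verbatim to McNamara's Algorithm~4.4'' once $J$ is enlarged to $K$. They do not: when $y_k \in \OO^\times$ and $\pi_{k+1}^{-1}(\gamma_k) \in \Phi^-$, the paper's algorithm applies the identity $e_{-\gamma}(z) = h_\gamma(z^{-1}) e_\gamma(z) s_\gamma e_\gamma(z^{-1})$ and absorbs $h_{\gamma_k}(y_k^{-1})$ into $h_k$, whereas McNamara simply pushes $e_{-\gamma_k}(y_k)$ into $K$ and leaves $h_k$ alone. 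Consequently the two sequences of $y_j$'s for $j<k$ can genuinely differ. What saves you is that $y_k^{-1} \in \OO^\times$ in this case, so the extra torus factor $h_{\gamma_k}(y_k^{-1})$ only rescales subsequent $y_j$'s by units; a short induction shows the shifts $t_j$ are unaffected and the valuations $\max(\val(y_j^{-1}),0)$ coincide with McNamara's for every $j$. That is the actual argument you need to supply in place of ``reduce verbatim''. Once you do, the identification $\bigsqcup_{w,\sigma} S_{m,\sigma}^{\textbf i} = C_m^{\textbf i}$ and hence the decomposition follow as you say.
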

    
    We have the following relation of our cells with McNamara's cells. 
    \begin{proposition}\label{lem:mcnamaracellrelation}
        McNamara cells decomposes into cells $S_{m,\sigma}$ as follows: 
        \[
            C_m \cap Bw_0wJ(w')^{-1} = \bigsqcup_{\sigma \in \Sigma(m,w,w')}S_{m,\sigma}.
        \]
    \end{proposition}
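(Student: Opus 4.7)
The plan is to deduce this proposition from \Cref{thm:celldecomposition} and \Cref{thm:mcn} via the intermediate inclusion $S_{m,\sigma} \subseteq C_m$ for every $\sigma \in \Sigma(m,w,w')$. Granted that inclusion, the argument becomes formal: starting from the decomposition
\[
    U \cap Bw_0 w J(w')^{-1} = \bigsqcup_{m' \in \N^N,\, \sigma \in \Sigma(m',w,w')} S_{m',\sigma}
\]
of \Cref{thm:celldecomposition} and intersecting both sides with $C_m$, the disjointness of McNamara's cells forces $m' = m$, leaving exactly $\bigsqcup_{\sigma \in \Sigma(m,w,w')} S_{m,\sigma}$ on the right.

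To establish the inclusion $S_{m,\sigma} \subseteq C_m$, I would compare our Iwahori algorithm (\Cref{lem:iwahoridecompositionstep}) applied to $uw'$ with McNamara's Iwasawa algorithm applied to $u$. Both produce coordinates $y_k$ via a recursion of the form $y_k = (x_k + t_k)\prod_{j > k}\text{(factor)}_j$, and in both cases the cell containing $u$ is read off from $m_k = \max(\val(y_k^{-1}),0)$. The two algorithms diverge only at indices $j$ where $y_j \in \OO^\times$: our algorithm, guided by the coloring, may insert the factor $y_j^{-\langle \gamma_k,\gamma_j^\vee\rangle}$ where McNamara's inserts $1$. Since $y_j \in \OO^\times$ is a unit, these discrepant factors are themselves units and do not affect valuations. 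A decreasing induction on $k$ then gives $\val(y_k^{\text{ours}}) = \val(y_k^{\text{McN}})$, and in particular $m_k^{\text{ours}}(u) = m_k^{\text{McN}}(u)$ for all $k$, which is exactly the inclusion $S_{m,\sigma} \subseteq C_m$.

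The main obstacle in this inductive comparison is the compatibility of the shifts $t_k$. Since $t_k$ is extracted from $p_{k+1}$ via commutations in $G_k$ whose exact form depends on the branching history, the expressions for $t_k^{\text{ours}}$ and $t_k^{\text{McN}}$ are genuinely different. What has to be checked at each step is that their difference has nonnegative valuation, so that $\val(x_k + t_k)$ is stable under the switch between the two algorithms; this reduces to tracking how the unit-valued discrepancies in the twist by $h_{k+1}$ propagate through the iterated commutations. Once this compatibility is verified, the inclusion $S_{m,\sigma} \subseteq C_m$ follows, and the proposition is a direct consequence of the formal manipulation outlined above.
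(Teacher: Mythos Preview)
The paper states this proposition without proof, placing it immediately after \Cref{thm:celldecomposition} and \Cref{thm:mcn} as if it were a formal consequence. Your reduction to the inclusion $S_{m,\sigma}\subseteq C_m$ followed by the intersection argument is exactly the right shape, and the formal part in your first paragraph is correct.

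The gap is in your proposed verification of $S_{m,\sigma}\subseteq C_m$. You assert that the check reduces to showing $t_k^{\text{ours}}-t_k^{\text{McN}}$ has nonnegative valuation. This is false already for $\GL_3$ with $\textbf{i}=(1,2,1)$, so that $\gamma_1=\alpha_1$, $\gamma_2=\alpha_1+\alpha_2$, $\gamma_3=\alpha_2$. Suppose $y_3\in\OO^\times$ and the Iwahori algorithm takes branch~1 at step~3 (the unit case with $\pi_4^{-1}(\gamma_3)\in\Phi^-$) while McNamara's takes branch~2. Then $p_3^{\text{ours}}=e_{\gamma_3}(y_3^{-1})$ versus $p_3^{\text{McN}}=1$. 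At step~2 the commutator $[e_{-\gamma_2}(x_2),e_{\gamma_3}(y_3^{-1})]$ contributes an $e_{-\gamma_1}(cx_2y_3^{-1})$ factor (since $-\gamma_2+\gamma_3=-\gamma_1$), so $t_1^{\text{ours}}=cx_2y_3^{-1}$ while $t_1^{\text{McN}}=0$. If additionally $y_2\notin\OO$, forcing $\val(x_2)<0$, then $t_1^{\text{ours}}-t_1^{\text{McN}}$ has \emph{negative} valuation.

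Nevertheless $m_1^{\text{ours}}=m_1^{\text{McN}}$ in this example: one computes $y_1^{\text{ours}}=y_3^{2}\,y_1^{\text{McN}}+cy_3$, so the two differ by a unit multiple plus an integral correction. This is the correct inductive hypothesis: $y_k^{\text{ours}}=u_k\,y_k^{\text{McN}}+r_k$ with $u_k\in\OO^\times$ and $r_k\in\OO$, which immediately gives $\max(\val((y_k^{\text{ours}})^{-1}),0)=\max(\val((y_k^{\text{McN}})^{-1}),0)$. The point is that the bad valuation in the shift discrepancy is cancelled by the $(y_j^{\text{ours}})^{-\langle\gamma_k,\gamma_j^\vee\rangle}$ factor in the product defining $y_k$, since the offending $x_j$ in $t_k$ arises from a commutator at a step $j$ where branch~1 was taken and hence that same $y_j$ appears in the product. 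Carrying this through the induction is what the argument actually requires; your hypothesis on the bare $t_k$'s is too weak to survive the recursion.
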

    
    \begin{remark}
        This connection of cells $S_{m,\sigma}$ explains the splitting phenomena observed in the colored lattice models. Since the integral over a cell $C_m$ equals to the sum of integrals over correspodning $S_{m,\sigma}$, we get a transition from colored models to uncolored models.
    \end{remark}
    
    \section{Colored data}\label{section:coloreddata}
    In the case of the general linear group and a good choice of a long word decomposition, it is possible to write down colorings from \Cref{section:decomposition} combinatorically which will be useful when computing Iwahori Whittaker functions in \Cref{section:calculation}. In that section we will need weights and contributions of the colored data. For coherency, we define them in this section next to the corresponding data. The statistics $r_{ij}$ and $s_{ij}$ and the Gauss sums $g(r_{ij},s_{ij})$ are defined in \Cref{section:calculation}.
    
    Let $G = \GL_{r+1}$. Realize the root system as $\Phi = \{(i,j) \in \Z^2 \mid 1 \leq i < j \leq r+1 \}$ and the positive roots as $\Phi^+ = \{(i,j) \in \Z^2 \mid 1 \leq i < j \leq r+1 \}$. Denote the order of $\Phi^+$ by $N = r(r+1)/2$. We identify weights $\Lambda = X_*(T) \cong \Z^{r+1}$ and use the standard elementary basis $e_i$, so that simple roots $\alpha_i = e_i - e_{i+1}$. The fundamental weights $\varpi_i$ are defined by 
	\[
        \varpi_i = (\underbrace{1, 1, \dots, 1}_{\text{$i$ times}}, \underbrace{0, \dots, 0}_{\text{$r+1-i$ times}}), \quad i = 1,\dots,r+1.
    \]
    
    We write a weight $\lambda = (\lambda_1, \lambda_2, \dots, \lambda_{r+1})$ uniquely as $\lambda = \sum_i \Lambda_i \varpi_i$ with $\Lambda_i = \lambda_i - \lambda_{i+1}$.
    
    Let $\textbf{i} = \Delta$ be the delta word, where \[
    	\Delta = (r,r-1,r,r-2,r-1,r,\dots,1,2,\dots,r).
    \]
    The choice of the delta word induces a total ordering $<_\Delta$ on the positive roots given by $(i,j) <_\Delta (i',j')$ if $j < j'$ or if $j = j'$ and $i < i'$. I.e.,
    \[
    	(1,2) <_\Delta (1,3) <_\Delta (2,3) <_\Delta (1,4) <_\Delta (2,4) <_\Delta \dots <_\Delta (r,r+1).
    \]
    
    We choose the delta word because it allows us to use the induction argument. To emphasize it, we write data labeled by positive roots in the form of a table, reading the data from the end and writing in the table from right to left, from top to bottom. For example,
    \[
    	\Delta = \begin{pmatrix}
    		1 & 2 & \dots & r-1 & r\\
    		& 2 & \dots & r-1 & r\\
    		& &\ddots & \vdots & \vdots\\
    		& & & r-1 & r \\
    		& & & & r
    	\end{pmatrix}, \quad 
    	\Phi^+ = \begin{pmatrix}
    		(1,r+1) & (2,r+1) & \dots & (r-1,r+1) & (r,r+1)\\
    		& (1,r) & \dots & (r-2,r) & (r-1,r)\\
    		& &\ddots & \vdots & \vdots\\
    		& & & (1,3) & (2,3) \\
    		& & & & (1,2)
    	\end{pmatrix}.
    \]
    
    We identify $W \cong S_{r+1}$. For the delta word, we interpret colorings in terms of actions on the set $\{1,2,\dots,r+1\}$; in this context we refer to numbers $1,2,\dots,r+1$ as \textit{colors}. 
    
    The results of the section are summarized in the following
    
    \begin{theorem}[Equivalence of Colored Data]\label{thm:bijection}
        There are weight-preserving bijections between
        \begin{itemize}
            \item colored Lusztig data $\LLu(\lambda+\rho,w,w')$,
            \item colored Gelfand-Tsetlin patterns $\GGT(\lambda+\rho,w,w')$,
            \item colored states $\mathfrak{S}(\lambda+\rho,w,w')$.
        \end{itemize}
    \end{theorem}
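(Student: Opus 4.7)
The plan is to construct the two bijections in a single pass, using the delta word $\Delta$ to align the scanning order on all three sides, and then verify weight preservation termwise against the product formula in \Cref{thm:miw}. Recall that under $\Delta$, the positive roots are ordered so that $\textbf{m} = (m_{(i,j)})_{1 \le i < j \le r+1}$ fits into the triangular array displayed just before the theorem. The classical (uncolored) bijection $\Lu \leftrightarrow \GT$ under the delta word sends Lusztig coordinates to GT entries by partial summation along rows and translates nonnegativity of $m_{(i,j)}$ into the interlacing inequalities defining a GT pattern; this dictionary is already used implicitly in McNamara's Theorem 8.6 and gives the ``skeleton'' on top of which I add colors.

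For the bijection $\LLu(\lambda+\rho,w,w') \leftrightarrow \GGT(\lambda+\rho,w,w')$, I would peel colors off the coloring sequence $\sigma = (\sigma_1,\dots,\sigma_{N+1}) \in \Sigma(m,w,w')$ one step at a time. Because $\Delta$ enumerates positive roots in exactly the order used to populate rows of the GT pattern, each transition $\sigma_{k+1} \mapsto \sigma_k$ in \eqref{coloringRules} can be attached to the corresponding GT-entry (or to the edge between two adjacent entries). The three branches of \eqref{coloringRules} --- forced equality when $m_k > 0$, forced reflection when $m_k = 0$ and $\sigma_{k+1}^{-1}(\alpha_{i_k}) \in \Phi^+$, and a genuine binary choice otherwise --- are precisely the local compatibility conditions that define a colored GT pattern in $\GGT(\lambda+\rho,w,w')$, with the boundary conditions $\sigma_{N+1}=w'$ and $\sigma_1=w$ providing the input/output data. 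Weight preservation on this side reduces to the observation that $\G(\textbf{m},\alpha;q)$ in \eqref{eq:lusztigcontribution} depends only on the local data $(m_k, \sigma_k, \sigma_{k+1})$ at the corresponding position, which is transported without change by the bijection.

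For the bijection $\GGT(\lambda+\rho,w,w') \leftrightarrow \mathfrak{S}(\lambda+\rho,w,w')$, I would use the standard row-by-row translation between GT patterns and lattice model states. The $k$-th row of the GT pattern records the vertical edge occupancies of the $k$-th row of the ice; interlacing becomes conservation of paths through each vertex. The coloring on the GT pattern is precisely the label on each colored strand as it travels through the lattice, and each admissible vertex configuration is matched to one of the three branches of \eqref{coloringRules}. The input permutation $w'$ prescribes the colors entering from the top of the model, while the output $w$ prescribes the order in which colored strands exit on the left. Since the delta word fixes the scanning convention consistently on both sides, Boltzmann weights and coloring contributions $\G(\textbf{m},\alpha;q)$ match vertex by vertex.

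The main obstacle is the third branch of \eqref{coloringRules}, where $m_k = 0$, $\sigma_{k+1}^{-1}(\alpha_{i_k}) \in \Phi^-$, and $\sigma_k$ can be either $\sigma_{k+1}$ or $s_{i_k}\sigma_{k+1}$. On the GT side this corresponds to two distinct admissible colorings of the same underlying entry, and on the lattice side it corresponds to two distinct admissible vertex configurations of the same underlying shape. I expect the careful bookkeeping here to be the bulk of the work: one must verify that the two choices split the Boltzmann weight (equivalently, the $p$-adic integral over the cell $S_{m,\sigma}^{\textbf{i}}$ of \Cref{def:cells}) in exactly the same way on all three sides, which ultimately follows from the domain decomposition $D_k = \OO^\times \sqcup \p$ recorded immediately after \Cref{thm:celldecomposition}. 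Once this matching is in place, the bijections and weight preservation follow by a direct induction on $k = N, N-1, \dots, 1$.
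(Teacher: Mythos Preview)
Your plan for $\LLu \leftrightarrow \GGT$ is essentially what the paper does: in fact, the paper \emph{defines} colored GT-patterns by transporting the coloring through the uncolored bijection $a_{i,j} = a_{i,j+1} - m_{i,j}$ of \Cref{lem:bijectionlusztiggt}, so on that side there is nothing to prove beyond checking that the contribution table \eqref{eq:lusztigcontribution} rewrites correctly under the substitution $m_{i,j} = a_{i,j+1} - a_{i,j}$ and $s_{i,j} = a_{i,j} - a_{i+1,j+1} - 1$.

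The $\GGT \leftrightarrow \mathfrak{S}$ part, however, has two genuine gaps.

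First, you never mention strictness. The paper's bijection (\Cref{lem:gtstatesbijection}) is between \emph{strict} colored GT-patterns $\SGGT(\lambda+\rho,w,w')$ and states $\mathfrak{S}(\lambda+\rho,w,w')$; a non-strict pattern contains a triangle ${}_a A,\ {}_b A,\ A$ with $a<b$, and the lattice model simply has no admissible configuration for this (two colors would have to occupy the same vertical edge in the wrong relative order). Your proposal treats the bijection as if every colored GT-pattern produced a state, and identifies the ``binary choice'' branch of \eqref{coloringRules} as the only obstacle, but the actual obstruction to bijectivity lies elsewhere. The weight-preserving statement survives only because non-strict patterns carry weight zero.

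Second, and more seriously, the matching is not ``vertex by vertex'' against the index set $\Phi^+$. There are $N = r(r+1)/2$ factors $\G(\textbf{m},\alpha;q)$ but $(r+1)(\lambda_1 + r + 1)$ vertices in the grid, so a direct one-to-one pairing cannot exist. What the paper does instead is observe that a lattice state is determined by its vertical edges, and that a single vertical edge carrying a \emph{set} $\Sigma$ of colors corresponds to a run of $|\Sigma|$ equal consecutive entries in one GT row. The Boltzmann weight of that one vertex, e.g.\ $(-q^{-1})^{|\Sigma|}$ in the first configuration of \Cref{fig:configurations}, is then matched against the product of $|\Sigma|$ local contributions $\G$ coming from those equal GT entries, and one must argue that strictness forces the colors $c_1 > c_2 > \dots > c_k$ in that run so that each factor is exactly $-q^{-1}$. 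Your induction on $k$ along the coloring sequence does not see this grouping, and without it the weight comparison does not go through.
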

    
    \subsection{Colored Lusztig data}
    Let us rewrite \Cref{coloringsDefinition} for $(m,w,w')$-colorings in terms of permutations $w,w' \in S_{r+1}$. Note that $w(\alpha_i) \in \Phi^+$ if and only if $w(i) < w(i+1)$. 
    
    Let $m \in\N^N$ and let $w,w' \in S_{r+1}$ be two permutations. An \textit{$(m,w,w')$-coloring} is a sequence $\sigma = (\sigma_{12}, \sigma_{13}, \dots, \sigma_{r,r+1}, \sigma') \in S_{r+1}^{N+1}$ with $\sigma_{12} = w$ and $\sigma' = w'$. For the uniform notation, we denote $\sigma_{i,i} = \sigma_{1,i+1}$ for $i\in 1, 2,\dots,r$ and $\sigma' = \sigma_{1,r+2} = \sigma_{r+1,r+1}$. Then \cref{coloringRules} becomes 
    \begin{equation}\label{eq:instrutionsrules}
    	\sigma_{i,j} = \begin{cases}
    		\sigma_{i+1,j}, &\text{if $m_{i,j} > 0$}\\
    		s_{i} \sigma_{i+1,j}, &\text{if $m_{i,j} = 0$ and $\sigma_{i+1,j}^{-1}(i) < \sigma_{i+1,j}^{-1}(i+1)$}\\
    		\text{$\sigma_{i+1,j}$ or $s_{i}\sigma_{i+1,j}$}, &\text{if $m_{i,j} = 0$ and $\sigma_{i+1,j}^{-1}(i) > \sigma_{i+1,j}^{-1}(i+1)$}
    	\end{cases}.
    \end{equation}
    
    From now on, we consider only the colorings for the delta word for the general linear group. We call $\sigma' = w'$ the \textit{input} and $\sigma = w$ the \textit{output} of a coloring $\sigma$. With the new notation, we give a characterization of the colorings in the following
    
    \begin{lemma}\label{lem:coloring}
    	An $(m,w,w')$-coloring $\sigma$ is uniquely determined by $\{\sigma_{1,r+1-j+2}(r+1-k+2)\}$ with $j = 1, 2, \dots, r+1$ and $k = j,\dots,r+1$. We organize this data in table with $r+1$ rows:
    	\begin{equation}\label{eq:colortable}
    		\begin{pmatrix}
    			\sigma_{1,r+2}(1) & \sigma_{1,r+2}(2) & \dots & \sigma_{1,r+2}(r) & \sigma_{1,r+2}(r+1)\\
    			& \sigma_{1,r+1}(2) & \dots & \sigma_{1,r+1}(r) & \sigma_{1,r+1}(r+1)\\
    			& & \ddots & \vdots & \vdots\\
    			& & & \sigma_{1,3}(r) & \sigma_{1,3}(r+1) & \\
    			& & & & \sigma_{1,2}(r+1)
    		\end{pmatrix}.
    	\end{equation}
    	Note that the zeroth row defines the input $\sigma_{1,r+2} = w'$.
    \end{lemma}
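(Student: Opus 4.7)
The proof proceeds by downward induction on $j$ (equivalently upward on $\ell = r+3-j$), reconstructing one column of the coloring at a time. The base case $j = 1$, $\ell = r+2$, is immediate: the top row of the table spells out the full one-line notation of $\sigma_{1,r+2} = w'$, which is precisely the input.

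For the inductive step, I assume $\sigma_{1,\ell+1}$ is already known. Using the convention $\sigma_{\ell,\ell} = \sigma_{1,\ell+1}$ to seed column $\ell$, I process the recurrence (\ref{eq:instrutionsrules}) at $i = \ell-1, \ell-2, \dots, 1$. At each step the value $m_{i,\ell}$ together with the relative order of $\sigma_{i+1,\ell}^{-1}(i)$ and $\sigma_{i+1,\ell}^{-1}(i+1)$ dictates $\sigma_{i,\ell}$ uniquely except in the ``or'' case $m_{i,\ell} = 0$ and $\sigma_{i+1,\ell}^{-1}(i) > \sigma_{i+1,\ell}^{-1}(i+1)$, where both $\sigma_{i,\ell} = \sigma_{i+1,\ell}$ and $\sigma_{i,\ell} = s_i \sigma_{i+1,\ell}$ are allowed.

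The key observation resolving the ``or'' ambiguity is that every subsequent simple reflection $s_{i-1}, s_{i-2}, \dots, s_1$ applied later in column $\ell$ acts only on values in $\{1, 2, \dots, i\}$, so the position of value $i+1$ becomes frozen immediately after step $i$ and coincides with its position in $\sigma_{1,\ell}$. Hence the choice at step $i$ corresponds exactly to whether value $i+1$ lands at position $\sigma_{i+1,\ell}^{-1}(i+1)$ (no swap) or $\sigma_{i+1,\ell}^{-1}(i)$ (swap). Combining this with the facts that (a) values exceeding $\ell$ in $\sigma_{1,\ell+1}$ retain their positions in $\sigma_{1,\ell}$ because the column-$\ell$ reflections permute only $\{1,\dots,\ell\}$, (b) the row-$j$ table data gives $\sigma_{1,\ell}(k)$ for $k \in \{j, j+1, \dots, r+1\}$, and (c) $\sigma_{1,\ell}$ is a permutation, I can read off the position of each relevant value $i+1$ as the steps are processed in order, resolving every ``or'' choice in turn.

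The main obstacle is the careful bookkeeping needed at each ``or'' step to verify that the available information -- induction-fixed positions coming from values exceeding $\ell$ in $\sigma_{1,\ell+1}$, the table entries at positions $\geq j$, and the choices already made at larger $i'$ in the same column -- always suffices to distinguish the two candidate positions for value $i+1$. Once the column-$\ell$ choices are all fixed, a forward trace of (\ref{eq:instrutionsrules}) from $\sigma_{\ell,\ell}$ recovers every intermediate $\sigma_{i,\ell}$, and the induction advances to column $\ell-1$; after $r$ iterations the entire coloring is recovered.
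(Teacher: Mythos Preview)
Your approach is essentially the same as the paper's: both argue by induction on rows (your ``columns'' $\ell$), reducing to the claim that the intermediate permutations $\sigma_{i,\ell}$ in a single row are determined once $\sigma_{1,\ell}$ is known. The paper dispatches this key step in one line by observing that
\[
\sigma_{1,\ell}\,\sigma_{1,\ell+1}^{-1} \;=\; s_{1}^{\epsilon_1}\,s_{2}^{\epsilon_2}\cdots s_{\ell-1}^{\epsilon_{\ell-1}},\qquad \epsilon_k\in\{0,1\},
\]
and that such a product of increasing simple reflections is written \emph{uniquely}, so the $\epsilon_k$ (and hence all intermediate $\sigma_{i,\ell}$) are forced. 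Your ``frozen value'' argument is exactly a hands-on proof of this uniqueness: after step $i$ the remaining reflections $s_{i-1},\dots,s_1$ fix the value $i+1$, so $\epsilon_i$ is read off from $\sigma_{1,\ell}^{-1}(i+1)$. The two arguments are equivalent; the paper's is terser, yours is more explicit.

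Where your write-up is weaker is the part you flag as ``the main obstacle.'' You need that the table entries in row $j$, together with the inductively known $\sigma_{1,\ell+1}$, actually pin down enough of $\sigma_{1,\ell}$ to run the frozen-value argument. The clean way to close this (and what the paper is implicitly using) is: the row-$\ell$ reflections permute only the $\ell$ values $\{1,\dots,\ell\}$, so the $r+1-\ell$ positions carrying values $>\ell$ are inherited from $\sigma_{1,\ell+1}$; the table supplies $\sigma_{1,\ell}(k)$ at the remaining $\ell-1$ positions $k\in\{r+3-\ell,\dots,r+1\}$; and the last value is forced since $\sigma_{1,\ell}$ is a permutation. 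Stating this count explicitly would remove the hand-waving in your final paragraph.
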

    \begin{proof}
    	Write $m$ and the corresponding simple reflections in the form
    	\[
    	m = \begin{pmatrix}
    		m_{1,r+1} & m_{2,r+1} & \dots & m_{r-1,r+1} & m_{r,r+1}\\
    		& m_{1,r} & \dots & m_{r-2,r} & m_{r-1,r}\\
    		& &\ddots & \vdots & \vdots\\
    		& & & m_{13} & m_{23} \\
    		& & & & m_{12}
    	\end{pmatrix}, \quad
    	\begin{pmatrix}
    		s_{1} & s_{2} & \dots & s_{r-1} & s_{r}\\
    		& s_{2} & \dots & s_{r-1} & s_{r}\\
    		& &\ddots & \vdots & \vdots\\
    		& & & s_{r-1} & s_{r} \\
    		& & & & s_{r}
    	\end{pmatrix}.
    	\]
    	The evaluation of $\sigma$ goes from the top row of $m$ down to bottom, reading each row from right to left. At each entry we either multiply by the corresponding $s_j$ or skip it according to the colorings rules \cref{eq:instrutionsrules}. Note that after the $k$-th row the future permutations do not affect colors in positions $1,2,\dots,k$ since there are no reflections $s_1,\dots,s_{k}$ in the sequel. Hence, by induction it is enough to prove that permutations $\sigma_{k,r+1}$ for $k \in 1,2,\dots,r$ of the first row are determined by $\sigma_{1,r+1}$ alone. Indeed, by the colorings rules in \cref{eq:instrutionsrules}, 
    	\[
    		\sigma_{1,r+1} = (s_1^{\epsilon_1}\dots s_r^{\epsilon_r})w', \quad \epsilon_k \in \{0, 1\}.
    	\]
    	But there is a unique way to write $\sigma_{1,r+1}(w')^{-1}$ as a product of increasing simple reflections. Hence, $\sigma_{1,r+1}$ determines all $\epsilon_k$'s which in order determine all $\sigma_{k,r+1}$'s. Finally, $\sigma_{1,r+1}$ is determined by its action on colors $2,3,\dots,r+1$ which is the first row of $\eqref{eq:colortable}$ (after the zeroth one).
    \end{proof}
    
    \begin{example}
    	Let $m = \begin{pmatrix}
    		1 & 0\\
    		& 0
    	\end{pmatrix}$. Let the input be $w' = 1$. Let us use \Cref{lem:coloring} to write all possible colorings:
    	\begin{description}
    	    \item[output $w = 1$] $(1, s_2, s_2, 1)$, or $\begin{psmallmatrix}
        		\colorbox{red}{1} & \colorbox{green}{2} & \colorbox{blue}{\color{white}{3}}\\
        		& \colorbox{blue}{\color{white}{3}} & \colorbox{green}{2} \\
        		& & \colorbox{green}{2}
        	\end{psmallmatrix}$;\\
    	    \item[output $w = s_1$] $(s_2, s_2, s_2, 1)$, or $\begin{psmallmatrix}
        		\colorbox{red}{1} & \colorbox{green}{2} & \colorbox{blue}{\color{white}{3}}\\
        		& \colorbox{blue}{\color{white}{3}} & \colorbox{green}{2}\\
        		& & \colorbox{blue}{\color{white}{3}}
        	\end{psmallmatrix}$.
    	\end{description}
    \end{example}
    
    We cannot resist the temptation to merge $m$ with the coloring provided by \Cref{lem:coloring}. For convenience, we add the zeroth row to $m$ filled with dashes. We overlay $m$ and $\sigma$. If $m_{i,j}$ has color $k$, we write ${}_k m_{i,j}$. The example above becomes
    \[
    	m = \begin{pmatrix}
    		- & - & -\\
    		  & 1 & 0\\
    		  &   & 0 
    	\end{pmatrix}, \qquad 
    	\begin{pmatrix}
    		{}_{1}\colorbox{red}{--} & {}_{2}\colorbox{green}{--} & {}_{3}\colorbox{blue}{\color{white}{--}}\\
    		& {}_{3} \colorbox{blue}{\color{white}{1}} & {}_{2} \colorbox{green}{0}\\
    		& & {}_{2}\colorbox{green}{0}
    	\end{pmatrix} \text{ and } 
    	\begin{pmatrix}
    		{}_{1}\colorbox{red}{--} & {}_{2}\colorbox{green}{--} & {}_{3}\colorbox{blue}{\color{white}{--}}\\
    		& {}_{3} \colorbox{blue}{\color{white}{1}} & {}_{2} \colorbox{green}{0}\\
    		& & {}_{3} \colorbox{blue}{\color{white}{0}}
    	\end{pmatrix}.
    \]
    
    Let $m \in \N^N$ and $\lambda = (\lambda_1,\lambda_2,\dots,\lambda_{r+1}) \in \Lambda$ be a weight. We write $\lambda = \sum_i\Lambda_i \varpi_i$ as the sum of fundamental weights, so $\Lambda_i = \lambda_i - \lambda_{i+1}$. We define statistics $s_{i,j} = s_{i,j}(m, \lambda)$ by
    \[
        s_{i,j} = \Lambda_i + \sum_{k=j}^{r}m_{i+1,k+1} - \sum_{k=j}^{r+1}m_{i,k},
    \]
    
    \begin{definition}\label{def:lusztigdata}
        Let $\lambda$ be a weight. The finite set of $m \in \N^N$ such that $s_{i,j} \geq -1$ for all $(i,j) \in \Phi^+$ is called the \textit{Lusztig data corresponding to $\lambda+\rho$} and denoted by $\Lu(\lambda+\rho)$.
    \end{definition}
    
    Lusztig data is used to parametrize the Kashiwara crystal $\mathcal{B}(\lambda+\rho)$ for $\GL_{r+1}$ which explains the shift by $\rho$ in the notation. We exclusively use colorings for the Lusztig data. So we specialize from arbitrary strings $\N^N$ to the set of Lusztig data. 
    
    A \textit{colored Lusztig data} corresponding to $\lambda$ with input $w$ and output $w'$ is the pair $(m,\sigma)$ of a Lusztig datum $m \in \Lu(\lambda+\rho)$ and a $(m,w,w')$-coloring $\sigma$ that we visualize as a table of entries of $m$ colored by $\sigma$ using \Cref{lem:coloring} as discussed above. The set of all colored Lusztig data with input $w$ and output $w'$ is denoted by $\LLu(\lambda+\rho, w, w')$.
    
    This definition is not exactly useful since for a given Lusztig datum $m$ we need to find a coloring $\sigma$, make a coloring by \Cref{lem:coloring}, and only then merge the pattern with the coloring to get a colored Lusztig datum. Now we give a purely combinatorial definition of the colored Lusztig data that doesn't involve $(m,w,w')$-colorings at all.
    
    \begin{definition}\label{def:coloring}
        Let $\lambda \in \Lambda$ be a weight and $w,w' \in S_{r+1}$ be two permutations. A \textit{coloring} $\sigma$ of a Lusztig data $m \in \Lu(\lambda + \rho)$ with input $w'$ and output $w$ is an assignment of colors $1,2,\dots,r+1$ to each entry of $m$ according to the following procedure. 
        
        \begin{procedure}\label{procedure:coloring}
        Write $m \in \Lu(\lambda + \rho)$ in the form 
    	\[
        	m = \begin{pmatrix}
        		- & - & - & \dots & - & -\\
        		& m_{1,r+1} & m_{2,r+1} & \dots & m_{r-1,r+1} & m_{r,r+1}\\
        		& & m_{1,r} & \dots & m_{r-2,r} & m_{r-1,r}\\
        		& & &\ddots & \vdots & \vdots\\
        		& & & & m_{13} & m_{23} \\
        		& & & & & m_{12}
        	\end{pmatrix}.
    	\]
    	
    	 The zero-th row of dashes is given colors by the input permutation $w'$, that is, the zero-th row has colors $w'(1),w'(2),\dots,w'(r+1)$. Starting from the next row, we color entries from top to bottom, from right to left, using the following rules at each step. 
    	 
    	 You have a buffer color $e$. Every time you start a new row, you update $e$ to be the color to the top right most entry of the previous row. While in a row, $e$ will be updated as the procedure goes.  At each step you have a triangle of values and colors
    	 \[
        	 \begin{pmatrix}
            	{}_a A & {}_b B\\
            	& C
            \end{pmatrix}
        \]
        
        \begin{description}
            \item[If $C > 0$] Paint $C$ to the color $e$ and update $e = a$. 
            \item[If $C = 0$ and $a < b$] Paint $C$ to the color $a$ and don't update $e$
            \item[If $C= 0 $ and $a > b$] You can do one of the following 
                \begin{enumerate}
                    \item Paint $C$ to the color $e$ and update $e = a$
                    \item Paint $C$ to the color $a$ and don't update $e$
                \end{enumerate}
        \end{description}
        \end{procedure}
        
        Consider colors that are present in $k$-th row, but not in $(k+1)$-th row. Informally, they ''leave`` the table. They form a permutation which should be the output $w$.
    \end{definition}
    
    Note that the colorings from the definition above give all possible colorings by \Cref{lem:coloring}. Therefore, we give
    
    \begin{definition}
        Let $\lambda \in \Lambda$ be a weight and $w,w' \in S_{r+1}$ be two permutations. The set of \textit{colored Lusztig data} $\LLu(\lambda+\rho,w,w')$ is the set of all possible pairs of Lusztig data $m \in \Lu(\lambda+\rho)$ and colorings $\sigma \in \Sigma(m,w,w')$ given by \Cref{def:coloring} with input $w'$ and output $w$.
    \end{definition}
    
    \begin{remark}
        The coloring procedure itself is not an algorithm as it requires a choice in the case $C = 0$ and $a > b$. But it is the basis of an algorithm enumerating all colored Lusztig data as the leaves of a binary tree of all possible colorings.
    \end{remark}
    
    \begin{example}
    	Let $\lambda = (1, 0, 0)$ and let
    	\[
    	m = \begin{pmatrix}
    		- & - & - & -\\
    		& 1 & 0 & 0\\
    		&   & 0 & 1\\
    		&   &   & 0	
    	\end{pmatrix} \in \Lu(\lambda+\rho).
    	\]
    	
    	Here is the application of the coloring procedure to produce a coloring with input $w' = 1$ and output $w = \begin{psmallmatrix}
    	    1 & 2 & 3 & 4 \\
    	    1 & 2 & 4 & 3
    	\end{psmallmatrix}$.
    	\[
    	m = \begin{pmatrix}
    		{}_1\colorbox{red}{--} & {}_2\colorbox{green}{--} & {}_3\colorbox{blue}{\color{white}{--}} & _{4}\colorbox{magenta}{--} \\
    		& 1 & 0 & 0 \\
    		&   & 0 & 1 \\
    		&   &   & 0
    	\end{pmatrix} \xrightarrow{1}
    	\begin{pmatrix}
    		{}_1\colorbox{red}{--} & {}_2\colorbox{green}{--} & {}_3\colorbox{blue}{\color{white}{--}} & _{4}\colorbox{magenta}{--} \\
    		& 1 & 0 & {}_{3}\colorbox{blue}{\color{white}{0}} \\
    		&   & 0 & 1 \\
    		&   &   & 0
    	\end{pmatrix} \xrightarrow{2} 
    	\begin{pmatrix}
    		{}_1\colorbox{red}{--} & {}_2\colorbox{green}{--} & {}_3\colorbox{blue}{\color{white}{--}} & _{4}\colorbox{magenta}{--} \\
    		& 1 & {}_2\colorbox{green}{0} & {}_{3}\colorbox{blue}{\color{white}{0}} \\
    		&   & 0 & 1 \\
    		&   &   & 0
    	\end{pmatrix} \xrightarrow{3}
    	\]
    	\[
    	\xrightarrow{3}
    	\begin{pmatrix}
    		{}_1\colorbox{red}{--} & {}_2\colorbox{green}{--} & {}_3\colorbox{blue}{\color{white}{--}} & _{4}\colorbox{magenta}{--} \\
    		& {}_4\colorbox{magenta}{1} & {}_2\colorbox{green}{0} & {}_{3}\colorbox{blue}{\color{white}{0}} \\
    		&   & 0 & 1 \\
    		&   &   & 0
    	\end{pmatrix} \xrightarrow{4}
    	\begin{pmatrix}
    		{}_1\colorbox{red}{--} & {}_2\colorbox{green}{--} & {}_3\colorbox{blue}{\color{white}{--}} & _{4}\colorbox{magenta}{--} \\
    		& {}_4\colorbox{magenta}{1} & {}_2\colorbox{green}{0} & {}_{3}\colorbox{blue}{\color{white}{0}} \\
    		&   & 0 & {}_3\colorbox{blue}{\color{white}{1}} \\
    		&   &   & 0
    	\end{pmatrix} \xrightarrow{5} 
    	\begin{pmatrix}
    		{}_1\colorbox{red}{--} & {}_2\colorbox{green}{--} & {}_3\colorbox{blue}{--} & _{4}\colorbox{magenta}{--} \\
    		& {}_4\colorbox{magenta}{1} & {}_2\colorbox{green}{0} & {}_{3}\colorbox{blue}{\color{white}{0}} \\
    		&   & {}_4 \colorbox{magenta}{0} & {}_3\colorbox{blue}{\color{white}{1}} \\
    		&   &   & 0
    	\end{pmatrix} \xrightarrow{6} 
    	\]
    	\[
    	\xrightarrow{6}
    	\begin{pmatrix}
    		{}_1\colorbox{red}{--} & {}_2\colorbox{green}{--} & {}_3\colorbox{blue}{\color{white}{--}} & _{4}\colorbox{magenta}{--} \\
    		& {}_4\colorbox{magenta}{1} & {}_2\colorbox{green}{0} & {}_{3}\colorbox{blue}{\color{white}{0}} \\
    		&   & {}_4 \colorbox{magenta}{0} & {}_3\colorbox{blue}{\color{white}{1}} \\
    		&   &   & {}_3\colorbox{blue}{\color{white}{0}}
    	\end{pmatrix}.
    	\]
    	
    	At the beginning, the buffer color $e = 4$.
    	\begin{enumerate}
    	    \item[Step 1:] $C = 0$ and $3 = a < b = 4$. Paint $C = 0$ to the color $3$ (blue)
    	    \item[Step 2:] $C = 0$ and $2 = a < b = 3$. Paint $C = 0$ to the color $2$ (green)
    	    \item[Step 3:] $C = 1$. Paint $C = 1$ to the color $4$ and set $e = 1$ (red)
    	    \item[New row:] Set $e = 3$ (blue)
    	    \item[Step 4:] $C = 1$. Paint $C = 1$ to the color $e = 3$ (blue)
    	    \item[Step 5:] $C = 0$ and $4 = a > b = 2$. We have a choice. We choose to paint $C = 0$ to the color $a = 4$. We do not update color $e$.
    	    \item[New row:] Set $e = 3$ (blue)
    	    \item[Step 6:] $C = 0$ and $4 = a > b = 3$. We have a choice. We choose to paint $C = 0$ to the color $e = 3$ (blue)
    	\end{enumerate}
    	
    	All possible colorings with the input $\sigma' = 1$ are listed below. 
    	
    	\[
    		\left\{\begin{pmatrix}
    			{}_1\colorbox{red}{--} & {}_2\colorbox{green}{--} & {}_3\colorbox{blue}{\color{white}{--}} & _{4}\colorbox{magenta}{--} \\
    			& {}_4\colorbox{magenta}{1} & {}_2\colorbox{green}{0} & {}_{3}\colorbox{blue}{\color{white}{0}} \\
    			&   & {}_4 \colorbox{magenta}{0} & {}_3\colorbox{blue}{\color{white}{1}} \\
    			&   &   & {}_4\colorbox{magenta}{0}
    		\end{pmatrix}, 
    		\begin{pmatrix}
    			{}_1\colorbox{red}{--} & {}_2\colorbox{green}{--} & {}_3\colorbox{blue}{\color{white}{--}} & _{4}\colorbox{magenta}{--} \\
    			& {}_4\colorbox{magenta}{1} & {}_2\colorbox{green}{0} & {}_{3}\colorbox{blue}{\color{white}{0}} \\
    			&   & {}_4 \colorbox{magenta}{0} & {}_3\colorbox{blue}{\color{white}{1}} \\
    			&   &   & {}_3\colorbox{blue}{\color{white}{0}}
    		\end{pmatrix},
    		\begin{pmatrix}
    			{}_1\colorbox{red}{--} & {}_2\colorbox{green}{--} & {}_3\colorbox{blue}{\color{white}{--}} & _{4}\colorbox{magenta}{--} \\
    			& {}_4\colorbox{magenta}{1} & {}_2\colorbox{green}{0} & {}_{3}\colorbox{blue}{\color{white}{0}} \\
    			&   & {}_2 \colorbox{green}{0} & {}_3\colorbox{blue}{\color{white}{1}} \\
    			&   &   & {}_2\colorbox{green}{0}
    		\end{pmatrix}\right\}.
    	\]
    \end{example}
    
    Let $\textbf{m} = (m,\sigma) \in \LLu(\lambda+\rho)$ be a colored Lusztig datum. A weight $z^\textbf{m}$ is defined by
    \[
        z^\textbf{m} = \prod_{\alpha \in \Phi^+}z^{m_\alpha \alpha} = \prod_{i < j}(z_i/z_j)^{m_{ij}}.
    \]
    
    Let 
    \[
    	\begin{matrix}
    		{}_aA & {}_bB & \\
    		& {}_cC & {}_d D 
    	\end{matrix}
    \] 
    be a block of values and colors in the datum $\textbf{m}$, where $C$ is the position of $m_{i,j}$. If $C$ is the right-most entry in $m$, for convenience, we assume $D > 0$. Then the contribution $\G(\textbf{m}, \alpha)$ at $\alpha = (i,j) \in \Phi^+$ is defined by 
    \begin{equation}\label{eq:lusztigcontribution}
    	\resizebox{\textwidth}{!}{$\displaystyle
        \G\left(\begin{matrix}
    		{}_aA & {}_bB &\\
    	       	  & {}_cC & {}_d D 
    	\end{matrix}\right) \eqqcolon \begin{cases}
    	    \begin{cases}
    			g(r_{i,j}, s_{i,j}), &\text{if $C > 0$}\\
    			1, &\text{if $C = 0$ and $a < b$ and $s_{ij} \geq 0$}\\
    			0, &\text{if $C = 0$ and $a < b$ and $s_{ij} = -1$}\\
    			1-q^{-1}, &\text{if $C = 0$ and $a > b$, $c \neq a$ and $s_{ij}  \geq 0$}\\
    			-q^{-1}, &\text{if $C = 0$ and $a > b$, $c \neq a$ and $s_{ij} = -1$}\\
    			q^{-1}, &\text{if $C = 0$ and $a > b$, $c = a$ and $s_{ij} \geq 0$}\\
    			q^{-1}, &\text{if $C = 0$ and $a > b$, $c = a$ and $s_{ij} = -1$}
    		\end{cases}, &\parbox[t]{.2\textwidth}{if $D > 0$ or \\if $D = 0$ and $d \neq b$}\\
    		\begin{cases}
    			g(r_{i,j}, 0), &\text{if $C > 0$}\\
    			1, &\text{if $C = 0$ and $a < b$}\\
    			1-q^{-1}, &\text{if $C = 0$ and $a > b$, $c \neq a$}\\
    			q^{-1}, &\text{if $C = 0$ and $a > b$, $c = a$}
    		\end{cases}, &\text{if $D = 0$ and $d = b$}.
    	\end{cases}.$}
    \end{equation}
    
    Note that the weight of a colored Lusztig datum is zero if there is any root $(i, j) \in \Phi^+$ with $C = m_{i,j} = 0$, $s_{i,j} = -1$ and $a < b$ when $D > 0$ or ($D = 0$ and $d \neq b$). It motivates the following
    
    \begin{definition}
        A colored Lusztig datum $\textbf{m} \in \LLu(\lambda+\rho,w,w')$ is called \textit{strict} if there are no blocks of values and colors
        \[
        	 \begin{pmatrix}
            	{}_a A & {}_b B &\\
    	       	  & {}_c C & {}_d D
            \end{pmatrix}
        \]
        with $C = m_{i,j} = 0$, $s_{ij} = -1$, and $a < b$ when $D > 0$ or ($D = 0$ and $d \neq b$). Denote the set of strict colored Lusztig data by $\SLLu(\lambda+\rho,w,w')$. 
    \end{definition}
    
    In the $n$-metaplectic case, $g(r_{\alpha}, 0) = 0$ if $r_{\alpha} \not\equiv 0 \pmod{n}$. Analyzing the weight above, we see that it is zero if $r_\alpha \not\equiv 0 \pmod{n}$, $s_{ij} = 0$, when $D > 0$ or ($D = 0$ and $d \neq b$); or if $r_{i,j} \not\equiv 0 \pmod{n}$ when $D = 0$ and $d = b$. It motivates the following
    
    \begin{definition}
        A colored Lusztig datum $\textbf{m} \in \LLu(\lambda+\rho,w,w')$ is called ($n$-)\textit{superstrict} if $m$ is strict and $r_{\alpha} \equiv 0 \pmod{n}$ for all $\alpha \in \Phi^+$ if $s_{i,j} = 0$ when $D > 0$ or ($D = 0$ and $d \neq b$); or if $D = 0$ and $d = b$. Denote the set of strict colored Lusztig data by $\SSLLu(\lambda+\rho,w,w')$. 
    \end{definition}
    
    Prefix ``super'' comes from the supersymmetric lattice models that we will introduce later. 
    
    \subsection{Colored Gelfand-Tsetlin Patterns}
	We show that the colored Lusztig data is is in a weight-preserving bijection with the colored Gelfand-Tsetlin patterns which we define momentarily.
    
    A Gelfand-Tsetlin pattern (or GT-pattern for short) with the top row $(a_{11}, a_{12}, \dots, a_{1n}) \in \N^n$ is a triangular array of integers 
    \[
    	\left\{\begin{matrix}
    
        a_{11} & & a_{12} & & \dots & & a_{1n}\\
    
        & a_{22} & & \dots & & a_{2n} &\\
        & & \ddots & & \udots & & \\
        & & & a_{nn} & & &
        
        \end{matrix}\right\}
    \]
    such that the betweenness condition $a_{i,j} \geq a_{i+1,j+1} \geq a_{i,j+1}$ is satisfied for all $i,j$. In other words, each entry in the pattern lies between two entries above it. Denote by $\text{GT}(a_{11},a_{12},\dots,a_{1n})$ the set of GT-patterns with the top row $(a_{11},a_{12},\dots,a_{1n})$.
    
    Let $\mu$ be a dominant weight, that is, a partition. For our convenience, we parametrize $\GT(\mu)$ by solutions $(a_{1,2}, a_{1,3}, \dots, a_{r,r+1}) \in \N^N$ of the following inequalities: for each $(i, j) \in \Phi^+$, $a_{i,j} \geq 0$, and
    \[
        a_{i,j+1} \geq a_{i,j} \geq a_{i+1,j+1},
    \]
    where we denote $a_{i,r+2} = \mu_{i}$ for each $i = 1, 2, \dots, r+1$. This is how it looks:
    \[
    	\left\{\begin{matrix}
    
        \mu_1 &  & \mu_2 & & \dots & & \mu_{r} & & \mu_{r+1}\\
    	& a_{1,r+1} & & a_{2,r+1} & \dots & a_{r-1,r+1} & & a_{r,r+1} & \\
    	&  & a_{1,r} & & \dots & & a_{r-1,r} & & \\
    	&  &  \ddots&  & &  &\udots &\\
    	&  &  & a_{1,3} & & a_{2,3} & & \\
    	&  & & & a_{1,2} & & & 
        
        \end{matrix}\right\}
    \]
    
    A weight of a GT-pattern $T$ is defined by 
    \[
        z^T = \prod_{\alpha \in \Phi^+}z^{(a_{i,j+1}-a_{ij}) \alpha} = \prod_{i < j}(z_i/z_j)^{(a_{i,j+1}-a_{ij})}.
    \]
    
    Let $d_k = d_k(T)$ be the sum of entries in $k$-th row of a GT-pattern $T$, and $d_{r+2} = 0$. Then the weight $z^T$ is also given by 
    \[
        z^{T} = z^{\lambda+\rho}\prod_{i=1}^{r+1}z_i^{d_{i+1}-d_i}.
    \]
    
    Indeed, we get the power of $z_k$ in $z^T = \prod_{i < j}(z_i/z_j)^{a_{i,j+1}-a_{ij}}$ as follows: 
    \[
        \sum_{j=k+1}^{r+1}(a_{k,j+1}-a_{k,j}) -\sum_{i=1}^{k-1}(a_{i,k+1}-a_{i,k}) = d_{k+1} - d_k + (\lambda_k + r+1 - k). 
    \]
    
    Hence, $z^T = z^{\lambda+\rho}\prod_{i=1}^{r+1}z_i^{d_{i+1}-d_i}$, as required.
    
    We now show that GT-patterns with the top row $\lambda + \rho$ are in weight-preserving bijection with Lusztig data corresponding to $\lambda + \rho$. 
    
    \begin{lemma}\label{lem:bijectionlusztiggt}
        There is a weight-preserving bijection between GT-patterns with the top row $\lambda + \rho$ and colored Lusztig data corresponding to $\lambda + \rho$. The map $f \colon \Lu(\lambda + \rho) \to \GT(\lambda + \rho)$ defined by $f(m) = T = (a_{1,2},\dots,a_{r,r+1}) \in \N^N$ with $a_{i,j} = a_{i,j+1} - m_{i,j}$ for all $(i,j) \in \Phi^+$ is a weight-preserving bijection, that is, $z^m = z^T$. 
    \end{lemma}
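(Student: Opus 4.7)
The plan is to verify directly that the stated map $f$ and its obvious inverse give a bijection, with the Lusztig condition $s_{i,j} \geq -1$ translating exactly into the second betweenness inequality. Define the candidate inverse $g\colon \GT(\lambda+\rho) \to \Lu(\lambda+\rho)$ by $g(T)_{i,j} = a_{i,j+1} - a_{i,j}$. Since the recurrence $a_{i,j} = a_{i,j+1} - m_{i,j}$ with initial condition $a_{i,r+2} = (\lambda+\rho)_i$ has a unique solution, $f$ and $g$ are mutually inverse at the level of arrays; what remains is to check that $f(m)$ actually lies in $\GT(\lambda+\rho)$, that $g(T)$ actually lies in $\Lu(\lambda+\rho)$, and that weights match.

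The first betweenness inequality $a_{i,j+1} \geq a_{i,j}$ is immediate from $m_{i,j} \geq 0$. The key computation is the translation of the second betweenness inequality $a_{i,j} \geq a_{i+1,j+1}$ into the Lusztig condition. Using the telescoping sums
\begin{align*}
\sum_{k=j}^{r+1} m_{i,k} &= \sum_{k=j}^{r+1}(a_{i,k+1} - a_{i,k}) = a_{i,r+2} - a_{i,j},\\
\sum_{k=j}^{r} m_{i+1,k+1} &= \sum_{k=j}^{r}(a_{i+1,k+2} - a_{i+1,k+1}) = a_{i+1,r+2} - a_{i+1,j+1},
\end{align*}
together with $a_{i,r+2} - a_{i+1,r+2} = (\lambda+\rho)_i - (\lambda+\rho)_{i+1} = \Lambda_i + 1$, I would compute
\[
s_{i,j} = \Lambda_i + (a_{i+1,r+2} - a_{i+1,j+1}) - (a_{i,r+2} - a_{i,j}) = a_{i,j} - a_{i+1,j+1} - 1.
\]
Therefore $s_{i,j} \geq -1$ if and only if $a_{i,j} \geq a_{i+1,j+1}$, which simultaneously shows $f(m) \in \GT(\lambda+\rho)$ when $m \in \Lu(\lambda+\rho)$ and $g(T) \in \Lu(\lambda+\rho)$ when $T \in \GT(\lambda+\rho)$.

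Finally, weight preservation is immediate, since both weights are defined by the same expression in the differences $a_{i,j+1} - a_{i,j} = m_{i,j}$:
\[
z^m = \prod_{i<j}(z_i/z_j)^{m_{i,j}} = \prod_{i<j}(z_i/z_j)^{a_{i,j+1} - a_{i,j}} = z^T.
\]
There is no real obstacle here; the only nontrivial step is the telescoping identity converting $s_{i,j}$ into $a_{i,j} - a_{i+1,j+1} - 1$, which cleanly aligns the Lusztig inequality with the betweenness condition and makes both directions of the bijection fall out simultaneously.
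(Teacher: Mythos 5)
Your proof is correct and follows essentially the same route as the paper: both reduce the second betweenness inequality to the identity $s_{i,j} = a_{i,j} - a_{i+1,j+1} - 1$ via the telescoping expression $a_{i,j} = a_{i,r+2} - \sum_{k=j}^{r+1} m_{i,k}$, observe that $m_{i,j}\ge 0$ gives the first betweenness inequality, check that the evident inverse works, and note that the weights agree because both are functions of the differences $a_{i,j+1}-a_{i,j}=m_{i,j}$. No gaps.
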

    \begin{proof}
        First we show that $f(m)$ is a GT-pattern, that is, $a_{i,j+1} \geq a_{i,j} \geq a_{i+1,j+1}$. 
        
        Note that $a_{i,r+2} = \lambda_i + r+1 - i$ since the top row is $\lambda + \rho$. Thus, by induction, 
        \[
            a_{i,j} = a_{i,r+2} - \sum_{k=j}^{r+1}m_{i,k} = \lambda_i + r+1 - i - \sum_{k=j}^{r+1}m_{i,k}.
        \]
        
        Recall that $s_{i,j} = \lambda_i - \lambda_{i+1} + \sum_{k=j}^{r}m_{i+1,k+1} - \sum_{k=j}^{r+1}m_{i,k}$ and so $s_{i,j} = a_{i,j} - a_{i+1,j+1} - 1$. Since $m \in \Lu(\lambda + \rho)$, we have $s_{i,j} \geq -1$ for all $(i,j) \in \Phi^+$. Thus, we get $s_{i,j} = a_{i,j} - a_{i+1,j+1} -1  \geq -1$, or $a_{i,j} \geq a_{i+1,j+1}$. On the other hand, since $a_{i,j} = a_{i,j+1} - m_{i,j}$ and $m_{i,j} \geq 0$, we get $a_{i,j+1} \geq a_{i,j}$. Thus, $f(m)$ is indeed a GT-pattern. The same argument shows that the inverse map $g \colon \GT(\lambda+\rho) \to \Lu(\lambda+\rho)$ defined by $g(T) = (m_{1,2}, \dots, m_{r,r+1}) \in \N^N$ with $m_{i,j} = a_{i,j+1} - a_{i,j}$ is the inverse to $f$, and so $f$ is a bijection. Since $m_{ij} = a_{i,j+1}-a_{i,j}$, the bijection is weight-preserving, that is, $z^m = z^T$.
    \end{proof}
    
    \begin{remark}
    	More generally, for a dominant weight $\lambda$, the map given by the same formula yeilds a bijection $\Lu(\lambda) \to \GT(\lambda)$ between two parametrizations of the crystal $\mathcal{B}(\lambda)$. We won't need this result in the sequel.
    \end{remark}
    
    Let $\textbf{m} \in \LLu(\lambda + \rho,w,w')$ be a colored Lusztig data. By the bijection above, we can define the coloring of a GT-patterns by coloring each entry $a_{i,j}$ in the same color as $m_{i,j}$ under the bijection. It defines colored GT-patterns $\GGT(\lambda+\rho,w,w')$. 
    
    For convenience, we rewrite the step of coloring \Cref{procedure:coloring} for the GT-patterns. The coloring of the top row is given by the input $w'$. Starting from the next row, we color entries from top to bottom, from right to left, using the following rules at each step.
    \[
    \left\{\begin{matrix}
    	{}_a A &   &  {}_b B\\
     	       & C & 
    \end{matrix}\right\} \to 
    \begin{cases}
        \left\{\begin{matrix}
    		{}_a A & & {}_b B\\
    		& {}_e C &
    	\end{matrix}\right\}, &\text{if $C > A$}\\
    	\left\{\begin{matrix}
    		{}_a A & & {}_b B\\
    		& {}_a C &
    	\end{matrix}\right\}, &\text{if $C = A$ and $a < b$}\\
    	\left\{\begin{matrix}
    		{}_a A & & {}_b B\\
    		& {}_a C &
    	\end{matrix}\right\} \text{ or } 
    	\left\{\begin{matrix}
    		{}_a A & & {}_b B\\
    		& {}_e C &
    	\end{matrix}\right\}, &\text{if $C = A$ and $a > b$}
    	
    \end{cases}.
    \]
    
    Let $\textbf{T} \in \GGT(\lambda+\rho,w,w')$. Let 
    \[
    	\begin{matrix}
    		{}_a A &  & {}_b B &\\
    		& {}_c C &   & {}_d D 
    	\end{matrix}
    \] 
    be a block of values and colors in the pattern $\textbf{T}$, where $C$ is the position of $a_{i,j}$. If $C$ is the right-most entry in $T$, for convenience we assume $D < B$. Then the contribution $\G(\textbf{T}, \alpha)$ at $\alpha = (i,j) \in \Phi^+$ is defined by 
    
    \begin{equation}
    	\resizebox{\textwidth}{!}{$\displaystyle
        \G\left(\begin{matrix}
    		{}_a A &  & {}_b B &\\
    		& {}_c C &   & {}_d D 
    	\end{matrix}\right) \eqqcolon \begin{cases}	
    		\begin{cases}
    			g(r_{i,j}, s_{i,j}), &\text{if $C < A$}\\
    			1, &\text{if $C = A > B$ and $a < b$}\\
    			0, &\text{if $C = A = B$ and $a < b$}\\
    			1-q^{-1}, &\text{if $C = A > B$ and $a > b$ and $c = b$}\\
    			-q^{-1}, &\text{if $C = A = B$ and $a > b$ and $c = b$}\\
    			q^{-1}, &\text{if $C = A > B$ and $a > b$ and $c = a$}\\
    			q^{-1}, &\text{if $C = A = B$ and $a > b$ and $c = a$}
    		\end{cases}, &\parbox[t]{.2\textwidth}{if $D < B$ or \\if $D = B$ and $d \neq b$}\\
    		\begin{cases}
    			g(r_{i,j}, 0), &\text{if $C < A$}\\
    			1, &\text{if $C = A$}\\
    			1-q^{-1}, &\text{if $C = A$ and $a < b$ and $c = b$}\\
    			q^{-1}, &\text{if $C = A$ and $a > b$ and $c = a$}
    		\end{cases}, &\text{if $D = B$ and $d = b$}
    	\end{cases}.$}
    \end{equation}
    
    The statistics $r_{ij}$ and $s_{ij}$ and the Gauss sums $g(r_{ij},s_{ij})$ are defined in \Cref{section:calculation}. We rewrite them in terms of colored GT-patterns: $s_{ij} = a_{ij} - a_{i+1,j+1} - 1$ and $r_{ij} = \sum_{k \leq i}(a_{k,j+1}-a_{kj})$. 
    
    A colored GT-pattern $T$ is called \textit{strict} if no triangle of values and colors 
    \[\left\{\begin{matrix}
    	{}_a A &  & {}_b A &\\
    		& A &   & {}_d D 
    \end{matrix}\right\}\]
    with $a < b$ when $D < A$ or ($D = A$ and $d \neq b$) is present in $T$. Denote the set of strict colored GT-patterns by $\SGGT(\lambda+\rho,w,w')$. A colored GT-pattern $T$ is called ($n$-)\textit{superstrict} if $T$ is strict and $r_{ij} \equiv 0 \pmod{n}$ for all entries $C$ with $C < A < B$ when $D < B$ or ($D = B$ and $d \neq b$), or when $D = B$ and $d = b$ in the notation above. The same map from \Cref{lem:bijectionlusztiggt} gives the bijection between strict Lusztig data and strict GT-patterns; and between $n$-superstrict Lusztig data and $n$-superstrict GT-patterns. 
    
    When computing examples, GT-patterns are especially convenient because of the following observation. A term $a_{ij}$ with $s_{ij} = -1$ will be \textit{right-leaning}, that is, $a_{ij} = a_{i+1,j+1}$. Similarly, an entry with $m_{i,j} = 0$, will be \textit{left-leaning}, that is, $a_{ij} = a_{i,j+1}$. In \cite{BBF11}, such entries were decorated by boxes and circles.
    
    \subsection{Colored Lattice Models}
    We show that the colored data introduced above is in a weight-preserving bijection with the supersymmetric lattice models dual to the ones from \cite{BBBG19, BBBG20}. We work with the delta version of the models while in \cite{BBBG19, BBBG20} the gamma version is used. See the details in Section 8 of \cite{BBF11}.
    
    \subsubsection{Non-metaplectic case $n=1$}
    
    We introduce the colored lattice model $\mathfrak{S}(\mu,w,w')$ that depends on a partition $\mu$ of length $r+1$ and two permutations $w,w' \in S_{r+1}$. 
    
    The model is a rectangular grid consisting of $\mu_1 + 1$ columns numbered from $\mu_1$ to $0$ from left to right, and $n$ rows numbered from $r+1$ to $1$ from top to bottom. We launch $r+1$ colored paths of $r+1$ different colors that can go only down and left. Any number of paths can occupy a given vertical edges, but only one color can occupy a given horizontal edge. More formally, we allow only configurations from \Cref{fig:configurations} in our model. The edges with no colors are marked by plus signs. See Sections 6-7 of \cite{BBBG19} for details. 
    
    \newcommand{\arrowdiagram}[4]{
    \begin{tikzpicture}[anchor=base, baseline]
      \draw[line width=1.0pt ] (-1,0) node[left]{$#1$} -- (1,0) node[right]{$#3$};
      \draw[line width=5pt] (0,1) node[above]{$#2$} -- (0,-1) node[below]{$#4$};
    \end{tikzpicture}
    }
    
    \begin{figure}[ht]
    \begin{equation*}
        \resizebox{\textwidth}{!}{$\displaystyle
        \begin{array}[t]{|c|c|c|}\hline
            \arrowdiagram{+}{\Sigma}{+}{\Sigma} & \arrowdiagram{c}{\Sigma}{c}{\Sigma} & \arrowdiagram{c}{\Sigma}{+}{\Sigma \setminus \{c\}} \\
            y_i(-q^{-1})^{|\Sigma|} & (q^{-1})^{|\Sigma \cap [1,c-1]|} & (-q^{-1})^{|\Sigma \cap [c+1,r+1]|}(q^{-1})^{|\Sigma \cap [1,c-1]|}
            \\ \hline
            \arrowdiagram{+}{\Sigma}{c}{\Sigma \cup \{c\}} & \arrowdiagram{c}{\Sigma}{d}{\Sigma \cup \{d\} \setminus \{c\}} & \arrowdiagram{d}{\Sigma}{c}{\Sigma \cup \{c\} \setminus \{d\}}
            \\
            y_i(1-q^{-1})(-q^{-1})^{|\Sigma \cap [1,c-1]|} & (1-q^{-1})(-q^{-1})^{|\Sigma \cap [d-1,c-1]|}(q^{-1})^{|\Sigma \cap [1,d-1]|} & \text{not admissible!}
            \\ \hline
          \end{array} $
            }
    \end{equation*}
    \caption{The admissible configurations and Boltzmann weights on $i$-th row. Here $\Sigma$ is a set of colors passing through the vertical edge. The empty set corresponds to the plus sign. We assume $c > d$.}\label{fig:configurations}
    \end{figure}
    
    Weight $\mu$ and permutations $w,w'$ specify the boundary conditions: paths are launched at columns $\mu_1,\mu_2,\dots,\mu_{r+1}$ and have colors $w'(1), w'(2), \dots, w'(r+1)$ in this order. Moreover, the left boundary condition specifies what colors can leave at each row. The order of leaving colors is $w(1), w(2), \dots, w(r+1)$. See \Cref{fig:latticeexample} for an example. 
    
    \begin{figure}
        \centering
        \begin{tikzpicture}[xscale=0.75, yscale=0.5, font=\small]
            \grid{3}{4}
            
            \draw (1, 8) node[line width=1.5pt, red, spin] {$1$};
            \draw (5, 8) node[line width=1.5pt, cyan, spin] {$\scriptstyle 2,3$};
            \draw (7, 8) node[line width=1.5pt, magenta, spin] {$4$};
            
            \draw (0, 7) node[line width=1.5pt, blue, spin] {$3$};
            \draw (0, 5) node[line width=1.5pt, red, spin] {$1$};
            \draw (0, 3) node[line width=1.5pt, magenta, spin] {$4$};
            \draw (0, 1) node[line width=1.5pt, green, spin] {$2$};
        \end{tikzpicture} \quad \begin{tikzpicture}[xscale=0.75, yscale=0.5, font=\small]
            \newcommand\eps{0.15}
            \grid{3}{4}
            
            \draw [red, path] 
            (1,8) node {$1$} to
            (1,6) node {$1$} to [out=-90,in=0,looseness=2.0]
            (0, 5) node {$1$};
            
            \draw [green, path] 
            (5,8) to
            (5,6) node {$2$} to
            (5,4) to
            (5,2) node {$2$} to [out=-90,in=0,looseness=2.0]
            (4, 1) node {$2$} to
            (2, 1) node {$2$} to
            (0, 1) node {$2$};
            
            \draw [blue, path]
            (5-\eps, 8) to [out=-90, in=0, looseness=2.0]
            (4, 7) node {$3$} to 
            (2, 7) node {$3$} to
            (0, 7) node {$3$};
            
            \draw [magenta, path]
            (7,8) node {$4$} to 
            (7, 6) node {$4$} to [out=-90, in=0, looseness=2.0] 
            (6, 5) node {$4$} to [out=180, in=90, looseness=2.0] 
            (5+\eps, 4) to [out=-90, in=0, looseness=2.0]
            (4, 3) node {$4$} to 
            (2, 3) node {$4$} to
            (0, 3) node {$4$};
            
            \draw (5, 8) node[line width=1.5pt, cyan, spin] {$\scriptstyle 2,3$};
            
            \draw (5, 4) node[line width=1.5pt, brown, spin] {$\scriptstyle 2,4$};
        \end{tikzpicture}
        
        \caption{On the left are the boundary conditions of the model $\mathfrak{S}(\mu,w,w')$ for $\mu = (3,1,1,0)$, $w = 1$, $w' = \begin{psmallmatrix}1 & 2 & 3 & 4 \\ 3 & 1 & 4 & 2\end{psmallmatrix}$; on the right an example of a state in this model.}
        \label{fig:latticeexample}
    \end{figure}
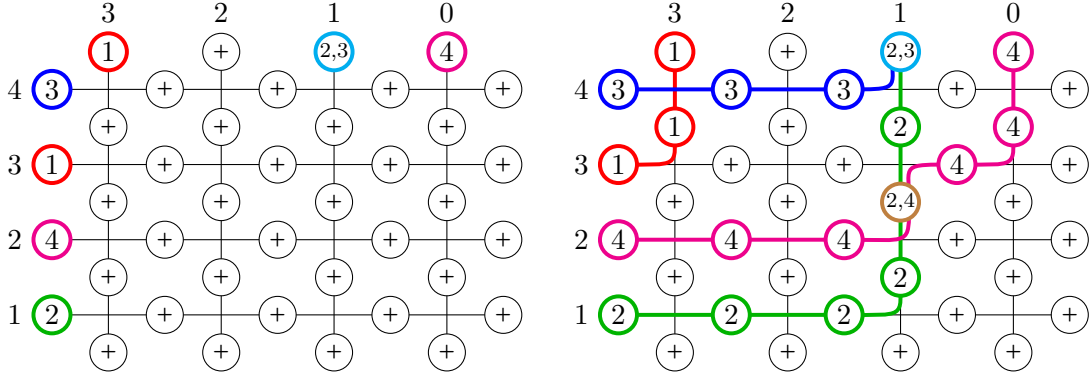
    
    \begin{remark}
        Note that weights we introduce in our model are dual to the weights from Figure 12 of \cite{BBBG19} in the same way as the delta weights are dual to the gamma weights in \cite{BBF11}. Hence, we can call the model in \cite{BBBG19} the \textit{colored gamma ice} and our model the \textit{colored delta ice}.
    \end{remark}
    
    Let $S \in \mathfrak{S}(\mu,w,w')$ be a state in the model. We define a weight of $S$ as the product of Boltzmann weights of all vertices in $S$:
    \[
        \wt(S) = \prod_{v \in S}\wt(v),
    \]
    where Boltzmann weights $\wt(v)$ are given in \Cref{fig:configurations}. 
    
    \begin{lemma}\label{lem:gtstatesbijection}
        There is a weight-preserving bijection between strict colored $GT$-patterns $\SGGT(\lambda+\rho,w,w')$ and colored lattice states $\mathfrak{S}(\lambda+\rho,w,w')$. The weight-preserving property here means that 
        \[
            \G(T) = \prod_{\alpha \in \Phi^+}\G(T,\alpha) = z^{\lambda+\rho}\prod_{v \in S}\wt(v) = z^{\lambda+\rho}\wt(S),
        \]
        where $y_i = z_i^{-1}$.
    \end{lemma}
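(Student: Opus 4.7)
The plan is to build the bijection row-by-row, using the classical GT-pattern / lattice state correspondence and then overlaying the colors. Given $\textbf{T} \in \SGGT(\lambda+\rho,w,w')$, I construct a state $S$ as follows. Number the rows of the lattice $r+1, r, \ldots, 1$ from top to bottom, and the columns $\mu_1, \mu_1-1, \ldots, 0$ from left to right, where $\mu = \lambda + \rho$. Reading the GT-pattern from top to bottom, the $k$-th row $(a_{k,k}, a_{k,k+1}, \ldots, a_{k,r+1})$ specifies which columns carry a vertical path entering row $r+2-k$ from above: column $a_{k,j}$ carries a downward path colored by the color of entry $a_{k,j}$. The horizontal paths are then forced: between column $a_{k+1,j+1}$ and column $a_{k,j}$ (inclusive) of row $r+2-k$, one path of the appropriate color travels to the left, reflecting the fact that a path enters each row from the top and leaves each row to the left. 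Conversely, reading a state $S$ row by row recovers the columns occupied by each color at every horizontal level, and these recover the rows of a GT-pattern whose betweenness follows from the requirement that each horizontal path travels only from right to left.

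The boundary conditions match by construction: the top row of $\textbf{T}$ equals $\lambda+\rho$ with colors $w'(1), \ldots, w'(r+1)$, matching the top boundary; and the colors that ``leave'' successive rows of $\textbf{T}$ form exactly the left boundary $w(1), \ldots, w(r+1)$. To see that $S$ is admissible, observe that the only forbidden configuration in \Cref{fig:configurations} is the one where a color $d$ enters on the top of a vertex and a color $c > d$ leaves on the bottom. In the GT-picture this translates to a triangle with $C = A = B$ (so no horizontal path actually exits this vertex, yet a color swap would be needed), or more precisely to a triangle of the shape $\bigl\{{}_a A,\; {}_b A \mid A \mid {}_d D\bigr\}$ with $a < b$ when $D < A$ or $D = A, d \neq b$ --- which is exactly the configuration excluded by strictness. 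So strict colored GT-patterns correspond precisely to admissible colored lattice states.

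It remains to match weights vertex-by-vertex with the contributions $\G(\textbf{T}, \alpha)$ attached to $\alpha = (i,j) \in \Phi^+$. The vertex of $S$ associated to entry $a_{i,j}$ of the pattern has top/bottom vertical edges and left/right horizontal edges whose occupations are read off from the values and colors of the triangle $\bigl\{{}_a A,\, {}_b B \mid {}_c C,\, {}_d D\bigr\}$ surrounding $a_{i,j}$. The case $C < A$ (resp.\ $C = A$) places a path on the left horizontal edge (resp.\ does not), and this distinguishes the top row of \Cref{fig:configurations} from its bottom row. The conditions $D < B$ vs.\ $D = B$ and $d \neq b$ vs.\ $d = b$ similarly control which vertex configurations appear (in particular, whether the right edge is occupied and whether the bottom vertical edge carries the same color as the top). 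Running through the six admissible cases and matching $y_i = z_i^{-1}$ together with the correspondence $r_{ij} = \sum_{k \leq i}(a_{k,j+1} - a_{kj})$, $s_{ij} = a_{ij} - a_{i+1,j+1} - 1$, one checks that each Boltzmann weight in \Cref{fig:configurations} equals the corresponding value in the case-split defining $\G(\textbf{T}, \alpha)$. The extra factor $z^{\lambda+\rho}$ in the product reflects the initial value $y_i$-factors contributed by the plus-signed horizontal edges on each row, which are not captured in the local contribution $\G(\textbf{T}, \alpha)$ of a single entry.

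The main obstacle is the bookkeeping in the last paragraph: one has to track which of the six admissible vertex types a given position realizes, based on the four adjacent values and colors in the GT-pattern, and verify that the powers of $-q^{-1}$, $q^{-1}$, and $1-q^{-1}$ on each side agree. Once this is done case by case (including the convention $D < B$ at the right boundary of each row), the bijection is evidently weight-preserving and the lemma follows.
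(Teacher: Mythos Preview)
Your proposal is correct and follows essentially the same approach as the paper: establish the bijection via the correspondence between GT-entries and occupied vertical edges, invoke strictness to rule out the single non-admissible configuration, and verify the weight identity by a case-by-case comparison of the Boltzmann weights with the contributions $\G(\textbf{T},\alpha)$. The paper organizes the weight check slightly differently---grouping the several GT-entries that share a column value into a single lattice vertex with $|\Sigma|>1$ and reading off the power of $-q^{-1}$ from that grouping, rather than speaking of ``the vertex associated to entry $a_{i,j}$'' as if the correspondence were one-to-one---but the substance is the same.
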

    \begin{proof}
        We first notice that a state in the model is uniquely determined by its vertical edges. Indeed, admissible configurations allow paths to move only down and left. The number of rows is exactly the number of paths. Then the vertical edges prescribe the positions at which each colored path must descent and at each row one path leaves the state.
        
        A colored GT-pattern uniquely specifies the positions and colors of paths on vertical edges. The value corresponds to the position and the color corresponds to the color of a path. Conversely, each state in the lattice models gives a unique colored GT-pattern. 
        
        For uniqueness, notice that the only obstruction is when we have the following triangle of entries and colors in a colored GT-pattern:
        \[
            \left\{\begin{matrix}
            	{}_a A &   &  {}_b A\\
             	       & A & 
            \end{matrix}\right\}.
        \]
        By strictness of GT-patterns, we have $a > b$, hence, there is no ambiguity.
        
        For the weight-preserving property we consider separately the contribution of $z$ to the weight and the contribution of $q$. Note that the only vertices that give $z$-contributions are vertices with an empty horizontal edge. Their number is exactly the difference between two elements in a GT-pattern from two consequent rows. Hence, all together, we get exactly the difference of rows.
        
        For $q$ contribution, we just follow how a GT-pattern is mapped to a lattice state. We consider an example of the first type of a vertex in \Cref{fig:configurations}. Consider part of a GT-pattern with the following configuration where $a > b$. 
        \[
            \left\{
                \begin{matrix}
                    {}_{c} A & & {}_{c_1} B & \dots & {}_{c_{k}} B &       & {}_{c_{k}} B\\
                      & {}_{d_1} B & & {}_{d_2} B         & \dots & {}_{d_{k}} B
                \end{matrix}
            \right\}
        \]
        
        Then it maps exactly to the first type of a vertex from \Cref{fig:configurations}. The only possible configurations for weights to be strict is when $c_1 > c_2 > \dots c_{k}$ and $d_1 > d_2 > \dots > d_k$ due to the strictness condition. But then it forces $d_k = c_k$ for all $k$. The weight of such configuration is $(-q^{-1})^k$ which is exactly the $q$ contribution of the corresponding vertex in the lattice model. 
    
        Other vertices types are similar. 
    \end{proof}
    
    \subsubsection{Metaplectic case}
    Now we introduce the supersymmetric lattice model $\mathfrak{S}^n(\mu,w,w')$ that depends a partition $\mu$ of length $r+1$, two permutations $w,w' \in S_{r+1}$, and an integer $n = 1, 2, \dots$. The case $n=1$ corresponds to the non-metaplectic model from the previous section. The supersymmetric model is the dual lattice model to the one considered in \cite{BBBG20}, so we use their vocabulary. In particular, they explain that in physics, the prefix s is used to imply that scolor is the supersymmetry partner of color. Scolors are elements on $\Z/n\Z$. 
    
    The model is a rectangular grid consisting of $\mu_1 + 1$ columns numbered from $\mu_1$ to $0$ from left to right. We launch $r+1$ colored paths of $r+1$ different colors that can go only down and left. We also launch $r+1$ scolored paths of different scolors that can go only down and right. Any number of colors or scolors can occupy a given vertical edges, but only one color or scolor can occupy a given horizontal edge. Moreover, colors and scolors should use the same vertical edges. More formally, we allow only configurations from \Cref{fig:superconfigurations} in our model. The edges with no color are marked by plus signs. See \cite{BBBG20} for details. 
    
    For convenience, we write $c,d$ for colors, and $\overline{j}, \overline{f}$ for scolors. In other words, $\overline{j}, \overline{f} \in \Z/n\Z$. 
    
    \begin{figure}[ht]
    \begin{equation*}
        \resizebox{\textwidth}{!}{$\displaystyle
        \begin{array}[t]{|c|c|c|c|c|}\hline
            \arrowdiagram{\overline{f}}{\Sigma}{\overline{f}}{\Sigma} & \arrowdiagram{c}{\Sigma}{c}{\Sigma} & \arrowdiagram{c}{\Sigma}{\overline{j}}{\Sigma \setminus \{c\}} \\ 
            y_i g(\overline{f}-\overline{j},-1)^{|\Sigma|} & (q^{-1})^{|\Sigma \cap [1,c-1]|} & (-q^{-1})^{|\Sigma \cap [c+1,r+1]|}(q^{-1})^{|\Sigma \cap [1,c-1]|}\\  \hline
            \arrowdiagram{\overline{j}}{\Sigma}{c}{\Sigma \cup \{c\}} & \arrowdiagram{c}{\Sigma}{d}{\Sigma \cup \{d\} \setminus \{c\}} & \arrowdiagram{d}{\Sigma}{c}{\Sigma \cup \{d\} \setminus \{c\}}
            \\
            y_i(1-q^{-1})(-q^{-1})^{|\Sigma \cap [1,c-1]|} & (1-q^{-1})(-q^{-1})^{|\Sigma \cap [d-1,c-1]|}(q^{-1})^{|\Sigma \cap [1,d-1]|} & \text{not admissible!}
            \\ \hline
          \end{array}$
          }
    \end{equation*}
    \caption{The admissible configurations and Boltzmann weights. We consider $i$-th row and $j$-th column. Here $\Sigma$ is a set of colors passing through the vertical edge. The empty set corresponds to the plus sign. In the second row are corresponding weights on the row labeled by $i$. We assume $c > d$.}\label{fig:superconfigurations}
    \end{figure}
    
    Note that the only admissible configurations are the ones where a path with scolor $\overline{e}$ descends on columns $j \equiv e \pmod{n}$. It will be useful because the Gauss sum $g(a,0) = 0$ when $a \not\equiv 0 \pmod{n}$. The possible configurations will rule out the states with zero contributions. 
    
    \begin{lemma}
        The Boltzmann weight of a state in the supersymmetric model equals to the weight of the corresponding $n$-superstrict colored GT-pattern under the bijection given in \Cref{lem:gtstatesbijection}.
    \end{lemma}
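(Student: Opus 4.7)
The plan is to extend the argument of \Cref{lem:gtstatesbijection} to the metaplectic setting, first lifting the bijection to incorporate the scolor data and then matching Boltzmann weights vertex type by vertex type. Since an admissible state in $\mathfrak{S}^n(\mu,w,w')$ is still determined by its vertical edges, and since the admissibility of supersymmetric vertices forces a scolored path of scolor $\overline{e}$ to descend only on columns $j \equiv e \pmod{n}$, the scolor content of the vertical edges is recovered uniquely from the descent columns already recorded by the underlying GT-pattern via the $n = 1$ bijection. Thus the map from \Cref{lem:gtstatesbijection} lifts directly to the supersymmetric setting. The $n$-superstrict condition $r_{i,j} \equiv 0 \pmod{n}$ should translate precisely into admissibility of the induced scolor labels on the horizontal edges, so no additional patterns need to be discarded.

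Next, I would match Boltzmann weights in \Cref{fig:superconfigurations} against the local contributions in the definition of $\G(\textbf{T},\alpha)$, one vertex type at a time. The five vertex types whose horizontal labels are colors carry the same $q$-power weights as in the non-metaplectic \Cref{fig:configurations}, so the local matching follows verbatim from \Cref{lem:gtstatesbijection}. The only genuinely new vertex is the first one, carrying weight $y_i \, g(\overline{f}-\overline{j},-1)^{|\Sigma|}$, which must be identified with the Gauss sum $g(r_{i,j}, s_{i,j})$ contributed by a GT-pattern entry with $C < A$. The $z$-contribution $y_i = z_i^{-1}$ is collected over exactly these vertices and yields the overall $z^{\lambda+\rho}$ factor as before.

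The main obstacle is therefore identifying $\overline{f}-\overline{j}$ with $r_{i,j}$ (modulo $n$) and the exponent $|\Sigma|$ with the quantity playing the role of $s_{i,j}$. I would argue that, as one traverses row $i$ from right to left, the scolor on each horizontal edge shifts by the column index of each descending colored path, so the cumulative shift at the vertex over column $j$ reads off exactly $\sum_{k \leq i}(a_{k,j+1}-a_{k,j}) = r_{i,j}$ modulo $n$. Combined with the multiplicativity of the Gauss sum over the paths crossing a vertical edge, this should produce the desired equality $g(\overline{f}-\overline{j},-1)^{|\Sigma|} = g(r_{i,j}, s_{i,j})$. The $n$-superstrict hypothesis enters at the boundary cases where $s_{i,j} = 0$ and one needs $g(r_{i,j},0) \neq 0$, i.e., $r_{i,j} \equiv 0 \pmod{n}$, which is precisely the congruence condition in the definition of $n$-superstrict. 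Once this bookkeeping is carried out, the rest of the argument is a straightforward reprise of the non-metaplectic case.
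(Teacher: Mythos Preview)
Your plan is essentially the same as the paper's approach: build on the non-metaplectic bijection of \Cref{lem:gtstatesbijection}, observe that the scolor descent constraint encodes precisely the congruence condition on $r_{i,j}$ defining $n$-superstrictness, and note that the only change in Boltzmann weights is the replacement of $(-q^{-1})$ by $g(\overline{f}-\overline{j},-1)$ in the first vertex type. The paper's proof is in fact far terser than your outline; it simply asserts the two points above without spelling out the bookkeeping you describe.

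One small imprecision: your target equality $g(\overline{f}-\overline{j},-1)^{|\Sigma|} = g(r_{i,j}, s_{i,j})$ conflates a single vertex with a single GT-entry. As in the proof of \Cref{lem:gtstatesbijection}, one vertex of the first type corresponds to a \emph{block} of $|\Sigma|$ GT-entries sharing a column, so the right-hand side should be a product of $|\Sigma|$ Gauss-sum contributions, each equal to $g(\cdot,-1)$ with the appropriate $r$-shift. Your appeal to ``multiplicativity'' suggests you already see this; just make the product explicit when you write it up.
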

    \begin{proof}
        We build on \Cref{lem:gtstatesbijection}. We already have a bijection between strict GT-patterns and colored states. Now we refine it to get a bijection between superstrict GT-patterns and supersymmetric states. Indeed, the supercolored paths give exactly the restriction on $r_{ij}$ which are given in the definition of the superstrict GT-patterns. 
        
        For the weights, we notice that $g(r_{ij}, 0)$ is taken care by the limitation of admissible states (or superstrict GT-patterns). The only thing we need to change now is to replace $(-q)^{-1}$ with $g(\overline{f}-\overline{j},-1)$ which is what we do in \Cref{fig:superconfigurations}. 
    \end{proof}
	
	\section{Computing metaplectic Iwahori Whittaker functions}\label{section:calculation}
	In this section we compute Iwahori Whittaker functions for the metaplectic covers of the general linear group $G = \GL_{r+1}$. We inherit notation from \Cref{section:coloreddata}: basis $e_i$, weights $\Lambda = X_*(T) \cong \Z^{r+1}$, roots $\Phi = \{(i,j) \in \Z^2 \mid 1 \leq i < j \leq r+1 \}$, positive roots $\Phi^+ = \{(i,j) \in \Z^2 \mid 1 \leq i < j \leq r+1 \}$, and the long word decomposition $\textbf{i} = \Delta$.
	
	Let $\widetilde{G}$ be a $n$-fold metaplecic cover of $G$ from \Cref{section:metaplecticgroups} coresponding to the bilinear form $B$ defined by $B(e_i,e_j) = \delta_{i,j}$. Then $l_\alpha = Q(\alpha^\vee) = B(\alpha^\vee, \alpha^\vee)/2 = 1$ for any $\alpha \in \Phi^+$. At the end of the section we consider other metaplectic covers.
	
	The Iwahori Whittaker functions are determined by \eqref{eq:miw}:
    \[
        \phi_w(\lambda,w'; z) = \int_{U} f_w(uw')\psi_\lambda(u)\,du.
    \]
    
    Recall that $f_w(uw') = f(u)$ for $u \in U \cap Bw_0wJ(w')^{-1}$ and zero otherwise. Then by \Cref{thm:celldecomposition}, we can rewrite values $\phi_w(\lambda,w'; z)$ as
    \[
        \phi_w(\lambda,w'; z) = \int_{U} f_w(uw')\psi_\lambda(u)\,du = \sum_{\substack{m \in \N^N \\ \sigma \in \Sigma(m,w,w')}}\int_{S_{m,\sigma}^\Delta}f(u)\psi_\lambda(u)\,du.
    \]
    
    
    Let $u \in S_{m,\sigma}^{\Delta}$. We use coordinates $y_1,y_2,\dots,y_N$ from \Cref{lem:iwahoridecompositionstep}. We introduce new coordinates $u_1,u_2,\dots,u_N$ as follows: if $y_k \not\in \OO_F$ (equivalently, $m_k > 0$), we write $y_k = \varpi^{-m_k}u_k$ for $u_k \in \OO_F^\times$, and if $y_k \in \OO_F$ (equivalently, $m_k = 0$), then $y_k = u_k$ for $u_k \in \OO_F$. We also need coordinates $w_1,w_2,\dots,w_N$ defined by $w_k = u_k$ if $m_k > 0$ and $w_k = 1$ if $m_k = 0$.

    We first give the explicit expression for function $f$ and the character $\psi_\lambda$ in terms of coordinates $u_1, u_2, \dots, u_N$ and $w_1,w_2,\dots,w_N$.
    
    \begin{lemma}\label{lem:explicitf}
        Let $u \in S_{m,\sigma}^\textbf{i}$ for arbitrary long word decomposition $\textbf{i}$. Then
        \[
            f(u)\,du = \left(\prod_{\alpha \in \Phi^+}z^{m_\alpha \alpha}(u_\alpha, \varpi)_{ij}^{m_\alpha + \sum_{\alpha <_\textbf{i} \beta}\langle \beta, \alpha^\vee \rangle m_\beta}\right)du_1du_2\dots du_N,
        \]
        where
        \[
            (\cdot, \cdot)_{i,j} = \begin{cases}
    			q^{-1}(\cdot, \cdot), &\text{if $m_{i,j} > 0$}\\
    			1, &\text{otherwise}
    		\end{cases}.
        \]
    \end{lemma}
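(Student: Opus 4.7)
The plan is to plug $u$ into the explicit Iwahori decomposition $uw' = p_1 h_1 \pi_1 j_1$ of \Cref{thm:iwahoridecomposition} and exploit the fact that $f$ is the Iwahori-fixed vector in the unramified genuine principal series: it is left-invariant under $U^+$ and right-invariant under $J$, so $f(uw')$ depends only on the value of the unramified character (times $\delta^{1/2}$) on $h_1$, together with the metaplectic $2$-cocycle that accumulates when writing $h_1$ as a product of Chevalley torus generators. The Weyl piece $\pi_1$ only controls the support of $f$, which is already built into the restriction to $S^{\textbf{i}}_{m,\sigma}$.

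First I would evaluate $z\cdot\delta^{1/2}$ on $h_1 = \prod_k h_{\gamma_k}(y_k^{-1})$, where the product runs over those indices $k$ for which \Cref{lem:iwahoridecompositionstep} takes the non-trivial branch (in particular every $k$ with $m_k > 0$). Each such factor contributes $z^{m_k\gamma_k}$ together with a power of $|y_k|$ coming from $\delta^{1/2}$, producing the claimed product $\prod_\alpha z^{m_\alpha\alpha}$ plus some $q^{-1}$ factors. Second, I would compute the Jacobian of the change of variables $(x_1,\dots,x_N)\mapsto(u_1,\dots,u_N)$. The recursion
\[
    y_k = (x_k+t_k)\prod_{j=k+1}^{N}y_j^{-\langle\gamma_k,\gamma_j^\vee\rangle}
\]
is triangular in the $x$'s with shifts depending only on later indices, so the Jacobian factorizes as $\prod_k\prod_{j>k}|y_j|^{\langle\gamma_k,\gamma_j^\vee\rangle}$; further substituting $y_k=\varpi^{-m_k}u_k$ yields additional factors $|\varpi|^{-m_k}$ when $m_k>0$. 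Combining the Jacobian with the $\delta^{1/2}$ contribution produces a factor of $q^{-1}$ with exponent exactly $m_\alpha + \sum_{\alpha<_{\textbf{i}}\beta}\langle\beta,\alpha^\vee\rangle m_\beta$ for each $\alpha$ with $m_\alpha > 0$.

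Third, I would collect the Hilbert symbols arising from the $n$-fold cover. Splitting $h_{\gamma_k}(\varpi^{m_k}u_k^{-1}) = h_{\gamma_k}(\varpi^{m_k})\,h_{\gamma_k}(u_k^{-1})$ picks up a symbol $(u_k,\varpi)^{m_k}$ (using $l_{\gamma_k}=1$ for $\GL_{r+1}$), and commuting the resulting torus elements past one another inside the cover produces additional symbols $(u_k,\varpi)^{m_j\langle\gamma_j,\gamma_k^\vee\rangle}$ for $j>_{\textbf{i}} k$. Summing gives precisely the exponent $m_\alpha+\sum_{\alpha<_{\textbf{i}}\beta}\langle\beta,\alpha^\vee\rangle m_\beta$, so packaging the $q^{-1}$ factors and Hilbert symbols into the compound symbol $(\cdot,\cdot)_{ij}$ yields the formula in the statement.

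The main obstacle is the simultaneous bookkeeping of the metaplectic cocycle, $\delta^{1/2}$, and the Jacobian: distinct orderings of commutations of the $h_{\gamma_k}(y_k^{-1})$'s yield different intermediate Hilbert symbols, and one must verify that after all cancellations the exponent at each root matches the stated one. This is essentially the computation of Section~8 of \cite{McN11} adapted to the Iwahori setting; crucially, the data $w$, $w'$ and $\sigma$ enter only through which branch of \Cref{lem:iwahoridecompositionstep} is taken at each step, and not into the Jacobian or the character value itself, so the formula holds uniformly across all cells $S^{\textbf{i}}_{m,\sigma}$.
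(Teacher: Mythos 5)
Your strategy is sound and essentially reproduces the computation underlying the result, but the paper takes a shortcut you did not take: since $S^{\textbf{i}}_{m,\sigma}\subset C^{\textbf{i}}_m$ by \Cref{lem:mcnamaracellrelation}, the author simply invokes Lemma~6.3 of \cite{McN11} (with the Hilbert-symbol typo corrected by its Theorem~8.4), and only gestures at the inductive use of the relation $h_\alpha(x)h_\beta(y)h_\alpha(x)^{-1}=h_\beta(y)(x,y)^{\langle\beta,\alpha^\vee\rangle l_\beta}$ with $l_\beta=1$. You instead re-derive McNamara's formula from scratch; this is more explicit, and your three ingredients (the value of $\delta^{1/2}\chi$ on $h_1$, the triangular Jacobian of $(x_k)\mapsto(u_k)$, and the cocycle Hilbert symbols) are the right ones.

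One imprecision you should address: you assert that $w$, $w'$, $\sigma$ ``enter only through which branch is taken\ldots and not into the Jacobian or the character value itself.'' That is not literally true. When $y_k\in\OO^\times$ and $\pi_{k+1}^{-1}(\gamma_k)\in\Phi^-$, the Iwahori recursion takes the nontrivial branch and contributes an extra factor $h_{\gamma_k}(y_k^{-1})$ to $h_1$ that would not appear in McNamara's Iwasawa algorithm; it also reinserts $y_k^{-\langle\gamma_j,\gamma_k^\vee\rangle}$ into the recursion defining later $y_j$, so both $h_1$ and the $u_j$ coordinates genuinely change with the branch choices. What saves the formula is that all such extra torus factors are supported on $\OO^\times$: they lie in (the splitting of) $T(\OO_F)$, on which the unramified character $\chi$ and the modular character $\delta^{1/2}$ are both trivial, they leave all valuations $m_k$ unchanged, and their Hilbert-symbol contributions cancel against the symbols generated by the corresponding unit perturbations of the $u_j$'s. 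This is the argument needed to conclude that the formula is uniform across all $S^{\textbf{i}}_{m,\sigma}$ lying over a fixed $C^{\textbf{i}}_m$; without it, the reduction to McNamara's lemma is not automatic.
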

    \begin{proof}
        By \Cref{lem:mcnamaracellrelation}, we have $S_{m,\sigma}^{\textbf{i}} \subset C_m^{\textbf{i}}$, and then it is Lemma 6.3 in \cite{McN11}. Note that typo in this lemma (there is no Hilbert symbol in the formula). The correct value of $f$ is given in the same paper in the proof of Theorem 8.4. The idea of the proof is prove it by induction using relation from \cref{eq:hrelations}:
        \[
            h_\alpha(x)h_\beta(y)h_\alpha(x)^{-1} = h_\beta(y)(x,y)^{ \langle \beta, \alpha^\vee \rangle l_\beta}.
        \]
        In our case $l_\beta = 1$ for all $\beta \in \Phi^\vee$ due to the choice of the metaplectic cover.
    \end{proof}
    
    \begin{lemma}\label{lem:explicitpsi}
        Let $u \in S_{m,\sigma}^\Delta$. Then
        \[
            \psi_\lambda(u) = \prod_{\alpha \in \Phi^+}\psi_{ij}\left(\varpi^{s_{ij}}u_{ij}\frac{\prod_{k=j+1}^{r+1} w_{i,k}}{\prod_{k=j}^{r} w_{i+1,k+1}}\right),
        \]
        where
        \[
            \psi_{i,j} = \begin{cases}
    			\psi, &\text{if $j=i+1$}\\
    			\psi, &\text{if $m_{i+1,j} > 0$}\\
    			\psi, &\text{if $m_{i+1,j} = 0$ and $\sigma_{i+1,j}^{-1}(i) < \sigma_{i+1,j}^{-1}(i+1)$}
    			\\
    			1, & \text{otherwise}
    		\end{cases}.
    	\]
    \end{lemma}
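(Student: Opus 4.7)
The plan is to reduce the statement to a matrix-entry computation using the delta decomposition, then apply the change of variables provided by \Cref{lem:iwahoridecompositionstep}, and finally translate $\varpi$-valuations into the coloring data.

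First, recall that on $U=U^{-}$ the Whittaker character factors through the simple-root subdiagonal matrix entries: one has
\[
    \psi_\lambda(u) \;=\; \psi\!\left(\sum_{i=1}^{r}\varpi^{-\Lambda_i}\,(u)_{i+1,i}\right),
\]
where $(u)_{i+1,i}$ denotes the $(i+1,i)$-entry of $u$. This follows from the conjugation description $\psi_\lambda(u)=\psi_0(h_\lambda u h_\lambda^{-1})$ with $h_\lambda=\mathrm{diag}(\varpi^{\lambda_1},\dots,\varpi^{\lambda_{r+1}})$ and $\psi_0$ the standard Whittaker character. A short induction shows that for the delta word $u=\prod_{k=1}^{N}e_{-\gamma_k}(x_k)$ one has $(u)_{j',i'}=x_{i',j'}$, since the nested structure of $\Delta$ prevents cross-terms between distinct $e_{-\gamma_k}$'s. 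In particular $(u)_{i+1,i}=x_{i,i+1}$, so
\[
    \psi_\lambda(u) \;=\; \prod_{i=1}^{r}\psi\!\bigl(\varpi^{-\Lambda_i}\,x_{i,i+1}\bigr).
\]

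Next, apply \Cref{lem:iwahoridecompositionstep} to expand $x_{i,i+1}$ in the $y$-coordinates. Because $\langle(i,i+1),(i',j')^\vee\rangle$ vanishes unless $i'\in\{i,i+1\}$, one obtains
\[
    x_{i,i+1}+t_{i,i+1} \;=\; y_{i,i+1}\prod_{j'=i+2}^{r+1}y_{i,j'}^{\,\epsilon_{i,j'}}\,y_{i+1,j'}^{\,-\epsilon_{i+1,j'}},
\]
where $\epsilon_{i',j'}\in\{0,1\}$ is the indicator of the non-trivial branch, i.e.\ $\epsilon_{i',j'}=1$ exactly when $\sigma_{i',j'}=s_{i_{(i',j')}}\sigma_{(i',j')+1}$. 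Using additivity of $\psi$ and the fact that each shift $t_{i,i+1}$ either vanishes or is absorbed by an integral change of variable, the single character $\psi(\varpi^{-\Lambda_i}x_{i,i+1})$ telescopes into a product over the positive roots in the $i$-th row of the coloring table, contributing one factor for every $(i,j)$ with $j\geq i+1$.

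Now substitute $y_k=\varpi^{-m_k}u_k$ (so $y_k=\varpi^{-m_k}w_k$ when $m_k>0$ and $y_k=u_k$, $w_k=1$ when $m_k=0$), collect powers of $\varpi$, and simplify. The exponent at position $(i,j)$ rearranges as
\[
    -\Lambda_i - m_{i,j} + \sum_{j'>j}\bigl(m_{i,j'}\epsilon_{i,j'}-m_{i+1,j'}\epsilon_{i+1,j'}\bigr),
\]
which collapses to $-s_{i,j}$ after using that $\epsilon_{i',j'}=1$ automatically whenever $m_{i',j'}>0$ (since then $y_{i',j'}\notin\OO$ forces the non-trivial branch). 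Absorbing the sign into $\psi$ yields $\varpi^{s_{i,j}}$, and the remaining units arrange themselves into $u_{i,j}\cdot\prod_{k>j}w_{i,k}/\prod_{k\geq j}w_{i+1,k+1}$, matching the claimed ratio.

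The main obstacle is the case analysis that determines when $\psi_{i,j}=\psi$ versus $\psi_{i,j}=1$. When $m_{i+1,j}=0$ and $\sigma_{i+1,j}^{-1}(i)>\sigma_{i+1,j}^{-1}(i+1)$, the algorithm of \Cref{lem:iwahoridecompositionstep} may choose either branch at step $(i+1,j)$, and one must verify that in \emph{both} sub-cases of \cref{eq:instrutionsrules} the resulting $\varpi$-power keeps the argument of $\psi$ inside $\OO$, causing the character to evaluate trivially; this is what produces $\psi_{i,j}=1$. Conversely, when $\sigma_{i+1,j}^{-1}(i)<\sigma_{i+1,j}^{-1}(i+1)$ or when $m_{i+1,j}>0$, the $y_{i+1,j}$-factor survives the telescoping and forces a genuinely non-integral shift, leaving $\psi_{i,j}=\psi$. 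Matching this delicate interplay between the coloring rules and the $p$-adic valuations of the $y_k$'s is the technical core of the argument; everything else is routine bookkeeping.
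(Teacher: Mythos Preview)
Your overall strategy matches the paper's: both reduce to the subdiagonal-entry formula $\psi_\lambda(u)=\prod_i\psi(\varpi^{\Lambda_i}x_{i,i+1})$, feed the change of variables from \Cref{lem:iwahoridecompositionstep} into each simple-root factor, and then read off the branch-dependent case analysis. The paper's proof is itself only a two-line pointer to Proposition~8.2 and Theorem~8.4 of \cite{McN11}, so at the level of \emph{plan} you are aligned with it.

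That said, two places in your write-up are doing more work than you acknowledge. First, the step you call ``telescoping'' is precisely the content of McNamara's Proposition~8.2, and it is not merely bookkeeping: you must explain how the \emph{single} additive character $\psi(\varpi^{\Lambda_i}x_{i,i+1})$ becomes a \emph{product} $\prod_{j\ge i+1}\psi_{i,j}(\cdots)$ indexed by all positive roots in row $i$. Since $\psi$ is additive, not multiplicative, this cannot come from factoring the argument $y_{i,i+1}\prod y_{i,j'}^{\pm\epsilon}$; rather, one needs a genuine telescoping \emph{sum} identity relating the arguments $\varpi^{s_{i,j}}u_{i,j}W_{i,j}$ across consecutive $j$. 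Your sentence ``the single character telescopes into a product over the positive roots in the $i$-th row'' asserts this but gives no mechanism, and the handling of the shifts $t_{i,i+1}$ (``either vanishes or is absorbed'') similarly hides a nontrivial type-A computation.

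Second, your explanation of why $\psi_{i,j}=1$ is misleading. You write that in the relevant branch ``the $\varpi$-power keeps the argument of $\psi$ inside $\OO$,'' but the lemma asserts that the \emph{character itself} is trivial in those cases, not that its argument happens to be integral. The distinction matters because the argument $\varpi^{s_{i,j}}u_{i,j}W_{i,j}$ can have $s_{i,j}=-1$ even when $\psi_{i,j}=1$, so the integrality explanation fails. The correct reason is structural: when $m_{i+1,j}=0$ and the coloring takes the indicated branch, the factor corresponding to $(i,j)$ simply does not arise in the telescoped expansion of $\psi(\varpi^{\Lambda_i}x_{i,i+1})$; writing $\psi_{i,j}=1$ is a notational device to record its absence. (There is also a persistent sign: the paper has $\varpi^{\Lambda_i}$, not $\varpi^{-\Lambda_i}$, and your claimed exponent $-\Lambda_i-m_{i,j}+\cdots$ does not match $-s_{i,j}$ without an extra sign flip you do not justify.)
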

    \begin{proof}
        Analogous to Proposition 8.2 and the beginning of Theorem 8.4 in \cite{McN11}. The idea of the proof is to follow how the entries of $u$ change with each step of \Cref{lem:iwahoridecompositionstep} in coordinates $y_k,u_k,w_k$ and write the value of $\psi_\lambda = \prod_{i=1}^{r}\psi(\varpi^{\Lambda_i} x_{i,i+1})$ explicitly. 
    \end{proof}
    
    By lemmas above, we have 
    \[
        \int_{S_{m,\sigma}^\Delta}f(u)\psi_\lambda(u)\,du = \left(\prod_{\alpha \in \Phi^+}z^{m_\alpha \alpha}\right)I(m,\sigma),
    \]
    
    where by $I(m,\sigma)$ we denote
    \begin{equation}\label{eq:summandintegral}
        \int_{S_{m,\sigma}^{\Delta}}\prod_{\alpha \in \Phi^+}\psi_{ij}\left(\varpi^{s_{ij}}u_{ij}\frac{\prod_{k=j+1}^{r+1} w_{i,k}}{\prod_{k=j}^{r} w_{i+1,k+1}}\right)(u_\alpha,\varpi)_{ij}^{m_\alpha + \sum_{\alpha <_\textbf{i} \beta}\langle \beta, \alpha^\vee \rangle m_\beta}\,du_{12}du_{13}\dots du_{r,r+1}.
    \end{equation}
    
    The change of variables 
    \[
        t_{ij} = u_{ij}\frac{\prod_{k=j+1}^{r+1} w_{i,k}}{\prod_{k=j}^{r} w_{i+1,k+1}}.
    \]
    transforms the integral into
    \[
        I(m, \sigma) = \prod_{\alpha \in \Phi^+}I_\alpha(m,\sigma),
    \]
    where 
    \[
        I_\alpha(m,\sigma) = \int_{D_{ij}}(t_\alpha, \varpi)^{r_{\alpha}}\psi(\varpi^{s_\alpha}t_\alpha)\,dt_\alpha,
    \]
    and
    \[
        s_{i,j} = \Lambda_i + \sum_{k=j}^{r}m_{i+1,k+1} - \sum_{k=j}^{r+1}m_{i,k}, \quad 
        r_{i,j} = \sum_{k \leq i}m_{k,j},
    \]
    and the domains $D_{i,j} = D_{i,j}(m,\sigma)$ are defined by 
    \[
        D_{i,j} \coloneqq \begin{cases}
        	\OO^\times, &\text{if $m_{i,j} > 0$}\\
        	\OO, &\text{if $m_{i,j} = 0$, $\sigma_{i+1,j}^{-1}(i) < \sigma_{i+1,j}^{-1}(i+1)$}\\
        	\OO^\times, &\text{if $m_{i,j} = 0$, $\sigma_{i+1,j}^{-1}(i) > \sigma_{i+1,j}^{-1}(i+1)$, and $\sigma_{i,j} = \sigma_{i+1,j}$}\\
        	\p, &\text{if $m_{i,j} = 0$, $\sigma_{i+1,j}^{-1}(i) > \sigma_{i+1,j}^{-1}(i+1)$, and $\sigma_{i+1,j} = s_{i}\sigma_{i+1,j}$}
        \end{cases}.
    \]
    
    See Section 8 of \cite{McN11} for details of this computation.
    
    Recall from \Cref{def:lusztigdata} that $\Lu(\lambda+\rho)$ denotes the set of Lusztig data corresponding to $\lambda+\rho$. It is a finite set for any weight $\lambda$. 
	\begin{lemma}
	    The integral $I(m,\sigma) = 0$ unless $m \in \Lu(\lambda + \rho)$. 
	\end{lemma}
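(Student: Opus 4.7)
The plan is to exploit the factorization $I(m,\sigma)=\prod_{\alpha\in\Phi^+}I_\alpha(m,\sigma)$ already recorded above, and to show that some local factor vanishes whenever $m\notin\Lu(\lambda+\rho)$, i.e.\ whenever $s_{i,j}\le -2$ for some $(i,j)\in\Phi^+$.

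The first ingredient is the standard $p$-adic vanishing statement: for the additive character $\psi$ of conductor $\OO_F$, any finite-order multiplicative character $\chi$ of $F^\times$ (in particular the twist $t\mapsto(t,\varpi)^{r}$), and any domain $D\in\{\OO_F,\OO_F^\times,\p\}$,
\[
\int_D \chi(t)\,\psi(\varpi^{s}t)\,dt \;=\; 0 \qquad\text{whenever }s\le -2.
\]
This follows by partitioning $D$ into cosets of a subgroup $1+\p^{c}$ small enough that $\chi$ is constant on each coset, and then using that $\psi(\varpi^{s}\cdot)$ is a nontrivial additive character of $\OO_F/\p^{|s|-1}$, so the sum of $\psi$ over cosets vanishes. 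Applied to the factor $I_\alpha$ at any $\alpha=(i,j)$ with $\psi_{i,j}=\psi$, this immediately gives $I_\alpha=0$ when $s_{i,j}\le -2$, hence $I(m,\sigma)=0$.

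The second step is to argue that a violation $s_{i,j}\le -2$ cannot be confined to entries where $\psi_{i,j}=1$. Since $\psi_{i,i+1}=\psi$ for every simple root, the base case of the propagation is free. For $j>i+1$, inspection of \Cref{lem:explicitpsi} shows that $\psi_{i,j}=1$ forces $m_{i+1,j}=0$ together with $\sigma_{i+1,j}^{-1}(i)>\sigma_{i+1,j}^{-1}(i+1)$. Using the telescoping identity
\[
s_{i,j}-s_{i,j+1}=m_{i+1,j+1}-m_{i,j}
\]
along row $i$, together with the coloring rule \eqref{eq:instrutionsrules} that describes how $\sigma_{i+1,j}$ evolves in $j$, one transports any failure at a character-free entry to a neighbouring entry. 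I expect this to show by induction on $j-i$ that every failing row already contains a failing entry with $\psi_{i,j}=\psi$, and Step~1 then finishes the job. This parallels the spherical calculation of Section~8 of \cite{McN11}, specialised so that each $I_\alpha$ can be analysed independently.

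The main obstacle is exactly this combinatorial propagation in the second step: certifying, by a careful case analysis of the coloring transitions at $\sigma_{i+1,j}\rightsquigarrow\sigma_{i,j}$, that a failure of $s_{i,j}\ge -1$ at a character-free entry cannot survive in isolation along its row. Once that combinatorial claim is settled, the integral identity is an immediate consequence of the $p$-adic orthogonality lemma.
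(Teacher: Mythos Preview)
Your approach matches the paper's: factor $I(m,\sigma)=\prod_\alpha I_\alpha$, kill any factor with $\psi_{i,j}=\psi$ and $s_{i,j}\le -2$ by the standard oscillatory-integral argument, and then show by propagation along row $i$ that a failure $s_{i,j}\le -2$ can always be pushed to an entry where $\psi_{i,j}=\psi$.

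The one thing to correct is your belief that the propagation requires a case analysis of the coloring transitions. It does not, and the paper's proof makes no use of the coloring at all. The only fact you need from $\psi_{i,j}=1$ is the consequence $m_{i+1,j}=0$; the condition on $\sigma_{i+1,j}$ is irrelevant here. Shifting your telescoping identity by one step gives
\[
s_{i,j-1}-s_{i,j}=m_{i+1,j}-m_{i,j-1},
\]
so $m_{i+1,j}=0$ yields $s_{i,j-1}=s_{i,j}-m_{i,j-1}\le s_{i,j}\le -2$. Thus the failure propagates from $(i,j)$ to $(i,j-1)$ automatically. Iterating (or, as you say, by induction on $j-i$) you eventually hit $j=i+1$, where $\psi_{i,i+1}=\psi$ always, and Step~1 applies. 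No analysis of how $\sigma_{i+1,j}$ evolves is needed, so the ``main obstacle'' you flag is not actually present.
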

	\begin{proof}
	    Let $m \in \N^{N}$ be not in $\Lu(\lambda + \rho)$. Let $(i,j) \in \Phi^+$ be such that $s_{i,j} < -1$. Recall that character $\psi$ is trivial on $\OO$ and nontrivial on $\varpi^{-1}\OO$.
	
	   \begin{description}
			\item[Case 1] Assume that $\psi_{i,j} = \psi$ in \Cref{lem:explicitpsi}. Then $I_\alpha(m,\sigma)$ becomes
			\[
			\int_{D_{i,j}}(t_{i,j},\varpi)^{r_{i,j}}_{i,j}\psi(\varpi^{s_{i,j}}t_{i,j})\,dt_{i,j}.
			\]
			Then no matter if $D_{i,j}$ is $\OO$, $\OO^\times$, or $\p$, the condition $s_{i,j} < -1$ implies that $I_\alpha(m,\sigma) = 0$ as an integral of a nontrivial character over a compact subgroup. 
			
			\item[Case 2] Assume that $\psi_{i,j} = 1$ in \Cref{lem:explicitpsi}. Note that $m_{i+1, j} = 0$ implies that $s_{i,j} = s_{i,j-1} + m_{i,j-1}$, and since $m_{i,j-1} \geq 0$, we have $s_{i,j-1} < -1$. Then consider $I_{i,j-1}(m,\sigma)$ instead. By above, $I_{i,j-1}(m,\sigma)$ is zero unless $m_{i+1,j-1} = 0$. Without loss of generality we assume that $m_{i+1,j-1} \neq 0$ (if it exists) by decreasing $j$ if necessary. In other words, we get that $I_{i,j-1}(m,\sigma) = 0$. 
		\end{description}
    	In any case, $I(m,\sigma) = \prod_{\alpha \in \Phi^+}I_\alpha(m,\sigma) = 0$ if $m$ is not in $\Lu(\lambda+\rho)$.
	\end{proof}
	
	\begin{remark}\label{rem:mcnamaraconjecture}
	    McNamara conjectures that (Remark 8.5 in \cite{McN11}) that the analogous sum for the spherical Whittaker function is always finite for arbitrary root system and arbitrary decomposition $\textbf{i}$. Leslie in \cite{Les19} shows that it is not always the case and gives an example of a long word decomposition for $G_2$ that has infinitely many non-zero terms in the corresponding evaluation. 
	\end{remark}
	
	Next, we compute integrals $I_\alpha(m, \sigma)$ explicitly. The integrals will be expressed in terms of the \textit{(normalized) Gauss sum} corresponding to character $\psi$ which is given by
    \[
        g(a, b)	= q^{-1}\int_{\mathcal{O}_F^\times}(t, \varpi)^a\psi(\varpi^b t)\,dt, \quad a, b \in \Z.
    \]
    
    Standard manipulations with the integral give the following explicit values:
    \[
    	g(a, b) = \begin{cases} 
    		0, \quad &\text{if $b < -1$},\\
    		-q^{-1}, \quad &\text{if $b=-1$ and $a \equiv 0 \pmod{n}$},\\
    		1-q^{-1}, \quad &\text{if $b \geq 0$ and $a \equiv 0 \pmod{n}$},\\
    		0, \quad &\text{if $b \geq 0$ and $a \not\equiv 0 \pmod{n}$}
    	\end{cases}.
    \]
    
    Moreover, if $a \not\equiv 0 \pmod{n}$, then $|g(a, -1)| = q^{1/2}$. 
    
    \begin{lemma}\label{lem:integralcalculation}
        Let $m \in \Lu(\lambda + \rho)$. Then $I_\alpha(m,\sigma) = \G(m,\sigma,\alpha)$, where
        
        \begin{equation}\label{eq:contribution}
            \G(m,\sigma,\alpha) = \begin{cases}
                \begin{cases}
    				g(r_{i,j}, s_{i,j}), &\text{if $m_{i,j} > 0$}\\
    				1, &\text{if $m_{i,j} = 0$ and  $D_{i,j} = \OO$ and $s_{i,j} \geq 0$}\\
    				0, &\text{if $m_{i,j} = 0$ and $D_{i,j} = \OO$ and $s_{i,j} = -1$}\\
    				1-q^{-1}, &\text{if $m_{i,j} = 0$ and $D_{i,j} = \OO^\times$ and $s_{i,j} \geq 0$}\\
    				-q^{-1}, &\text{if $m_{i,j} = 0$ and $D_{i,j} = \OO^\times$ and $s_{i,j} = -1$}\\
    				q^{-1}, &\text{if $m_{i,j} = 0$ and $D_{i,j} = \p$ and $s_{i,j} \geq 0$}\\
    				q^{-1}, &\text{if $m_{i,j} = 0$ and $D_{i,j} = \p$ and $s_{i,j} = -1$}
        		\end{cases}, \quad &\text{if $\psi_{i,j} = \psi$}\\
                \begin{cases}
    				g(r_{i,j}, 0), &\text{if $m_{i,j} > 0$}\\
    				1, &\text{if $m_{i,j} = 0$ and $D_{i,j} = \OO$}\\
    				1-q^{-1}, &\text{if $m_{i,j} = 0$ and $D_{i,j} = \OO^\times$}\\
    				q^{-1}, &\text{if $m_{i,j} = 0$ and $D_{i,j} = \p$}
    			\end{cases}, \quad &\text{if $\psi_{i,j} = 1$}
            \end{cases}.
        \end{equation}
    \end{lemma}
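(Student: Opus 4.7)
The plan is to compute the integral $I_\alpha(m,\sigma)$ case by case and match the result to the formula \eqref{eq:contribution}. Four pieces of data control the casework: whether $m_{i,j}$ is positive or zero (which determines the Hilbert-symbol normalization $(\cdot,\cdot)_{i,j}$ from \Cref{lem:explicitf}), whether $\psi_{i,j}$ equals $\psi$ or is trivial (from \Cref{lem:explicitpsi}), which of $\OO, \OO^\times, \p$ the domain $D_{i,j}$ is, and the value of $s_{i,j} \in \{-1, 0, 1, \dots\}$ (the lower bound following from the hypothesis $m \in \Lu(\lambda+\rho)$).

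First I would record the elementary $p$-adic integrals on which every subcase rests. Using that $\psi$ is trivial on $\OO$ and nontrivial on $\varpi^{-1}\OO$, one has $\int_{\OO}\psi(\varpi^b t)\,dt = 1$ for $b \geq 0$ and $0$ for $b < 0$. The substitution $t \mapsto \varpi t$ yields $\int_{\p}\psi(\varpi^b t)\,dt = q^{-1}\int_{\OO}\psi(\varpi^{b+1}t)\,dt$, and the partition $\OO^\times = \OO \setminus \p$ gives $\int_{\OO^\times}\psi(\varpi^b t)\,dt$ as the difference. Together with the defining identity $g(a,b) = q^{-1}\int_{\OO^\times}(t,\varpi)^a\psi(\varpi^b t)\,dt$, these are all the analytic ingredients needed.

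Second, in the case $m_{i,j} > 0$ we have $D_{i,j} = \OO^\times$ and the integrand of $I_\alpha$ inherits the $q^{-1}$ normalization from $(\cdot,\cdot)_{i,j}$. If $\psi_{i,j} = \psi$, the integral is precisely $g(r_{i,j}, s_{i,j})$ by the defining identity of the Gauss sum. If $\psi_{i,j} = 1$, one observes that $\psi(t) = 1$ for $t \in \OO^\times$ and rewrites the integral as $g(r_{i,j}, 0)$. Third, when $m_{i,j} = 0$ the Hilbert symbol trivializes and the integrand reduces to $\psi_{i,j}(\varpi^{s_{i,j}}t)$. Each of the remaining entries of \eqref{eq:contribution} is then read off from the three elementary identities: integration over $\p$ yields $q^{-1}$ whether $s_{i,j} \geq 0$ or $s_{i,j} = -1$, since the substitution reduces it to integration of a trivial character on $\OO$; integration over $\OO$ yields $1$ or $0$ according as $s_{i,j} \geq 0$ or $s_{i,j} = -1$; and integration over $\OO^\times$ yields $1 - q^{-1}$ or $-q^{-1}$ by subtraction.

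The only obstacle is bookkeeping rather than analysis: carefully tracking the absorption of the $q^{-1}$ factor into the Gauss-sum normalization in the $m_{i,j} > 0$ case, and matching each of the eleven rows of \eqref{eq:contribution} to its corresponding combination of domain, character, and $s_{i,j}$. No new analytic input is required beyond the three elementary $p$-adic integrals and the definition of $g(a,b)$.
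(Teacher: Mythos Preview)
Your proposal is correct and follows essentially the same case-by-case computation as the paper, splitting first on whether $\psi_{i,j}=\psi$ or $\psi_{i,j}=1$, then on whether $m_{i,j}>0$ or $m_{i,j}=0$, and finally on the domain $D_{i,j}$ and the sign of $s_{i,j}$. The paper's proof is terser (it simply writes down each integral and its value without spelling out the elementary identities for $\int_{\OO}$, $\int_{\p}$, $\int_{\OO^\times}$), but the logical content is identical.
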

    \begin{proof}
        We mindlessly compute it case by case. Note that we have $s_{i,j} \geq -1$ for all $(i,j) \in \Phi^+$.
    	
    	\begin{description}
    		\item[Case 1] Assume that $\psi_{i,j} = \psi$ in \Cref{lem:explicitpsi}.
    		
    		\begin{description}
    			\item[Case 1.1] Let $m_{i,j} > 0$. Then the integral $I_{i,j}(m,\sigma)$ becomes 
    			\[
    			q^{-1}\int_{\OO^\times}(t_{i,j}, \varpi)^{r_{i,j}}\psi(\varpi^{s_{i,j}}t_{i,j}) \, dt_{i,j} = g(r_{i,j}, s_{i,j}),
    			\]
    			by the definition of the normalized Gauss sum. 
    			\item[Case 1.2] Let $m_{i,j} = 0$. Then the integral $I_{i,j}(m,\sigma)$ becomes
    			\[
    			\int_{D_{i,j}}\psi(\varpi^{s_{i,j}}t_{i,j})\,dt_{i,j} = \begin{cases}
    				1, &\text{if $D_{i,j} = \OO$ and $s_{i,j} \geq 0$}\\
    				0, &\text{if $D_{i,j} = \OO$ and $s_{i,j} = -1$}\\
    				1-q^{-1}, &\text{if $D_{i,j} = \OO^\times$ and $s_{i,j} \geq 0$}\\
    				-q^{-1}, &\text{if $D_{i,j} = \OO^\times$ and $s_{i,j} = -1$}\\
    				q^{-1}, &\text{if $D_{i,j} = \p$ and $s_{i,j} \geq 0$}\\
    				q^{-1}, &\text{if $D_{i,j} = \p$ and $s_{i,j} = -1$}
    			\end{cases},
    			\]
    		\end{description}
    		
    		\item[Case 2] Assume that $\psi_{i,j} = 1$ in \Cref{lem:explicitpsi}.
    		
    		\begin{description}
    			\item[Case 2.1] Let $m_{i,j} > 0$. Then the integral $I_\alpha(m,\sigma)$ becomes
    			\[
    			q^{-1}\int_{\OO^\times}(t_{i,j}, \varpi)^{r_{i,j}} \, dt_{i,j} = g(r_{i,j}, 0).
    			\]
    			
    			\item[Case 2.2] Let $m_{i,j} = 0$. Then the integral $I_\alpha(m,\sigma)$ becomes 
    			\[
    			\int_{D_{i,j}}\, dt_{i,j} = \begin{cases}
    				1, &\text{if $D_{i,j} = \OO$}\\
    				1-q^{-1}, &\text{if $D_{i,j} = \OO^\times$}\\
    				q^{-1}, &\text{if $D_{i,j} = \p$}
    			\end{cases}.
    			\]
    		\end{description}
    	\end{description}
    	Thus, $I_\alpha(m,\sigma) = G(m,\sigma,\alpha)$, and it finishes the proof.
    \end{proof}
    
    We summarize the proof above.
    
    \begin{theorem}[Evaluation of Iwahori Whittaker functions]\label{thm:miw}
        Let $\lambda \in \Lambda$ with $\lambda_{r+1} = 0$ and let $w, w' \in W$. Then the integrals $\phi_w(\lambda, w'; z)$ defined by \cref{eq:miw} that determines values of Iwahori Whittaker functions is given by 
        \begin{equation*}
            \phi_w(\lambda, w'; z) = \sum_{\textbf{m} \in \LLu(\lambda+\rho,w,w')}\prod_{\alpha \in \Phi^+}\G(\textbf{m},\alpha; q)z^{m_\alpha \alpha},
        \end{equation*}
        where contributions $\G(\textbf{m},\alpha; q) \in \Z[q^{-1}]$ are given explicitly by \eqref{eq:lusztigcontribution}.
    \end{theorem}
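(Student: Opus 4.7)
The plan is to follow the four-step geometric strategy laid out in the introduction: decompose the domain, split the integral into a sum over cells, evaluate the local coordinates on each cell, and then repackage the sum as a sum over colored Lusztig data. Everything needed has essentially been set up in the preceding sections; the theorem is the assembly.

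First, starting from
\[
\phi_w(\lambda, w'; z) = \int_{U} f_w(uw')\,\psi_\lambda(u)\,du,
\]
I would use that $f_w(uw')$ is supported on $U \cap \widetilde{B}w_0 w J(w')^{-1}$ (this is the definition of the Iwahori section $f_w$) together with \Cref{thm:celldecomposition} to rewrite this as
\[
\phi_w(\lambda, w'; z) = \sum_{m \in \N^N}\sum_{\sigma \in \Sigma(m,w,w')}\int_{S_{m,\sigma}^{\Delta}} f(u)\,\psi_\lambda(u)\,du.
\]
Inside each cell, I would switch from the ambient coordinates $(x_1,\dots,x_N)$ to the coordinates $(u_1,\dots,u_N)$ (and the auxiliary $w_k$) produced by \Cref{lem:iwahoridecompositionstep}. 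The Jacobian of this change of variables is trivial in the appropriate sense on each cell because \Cref{lem:iwahoridecompositionstep} expresses $y_k$ as $x_k+t_k$ (with $t_k$ depending only on later coordinates) scaled by monomials in the later $y_j$, so the chain of substitutions is measure-preserving on each piece; this is the same measure comparison used in Section 8 of \cite{McN11}.

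Second, I would plug in the explicit formulas for $f(u)\,du$ (\Cref{lem:explicitf}) and $\psi_\lambda(u)$ (\Cref{lem:explicitpsi}). The key algebraic simplification is that the product of $z^{m_\alpha\alpha}$ factors out of the integral, and after the change of variables $t_{ij} = u_{ij}\prod_{k=j+1}^{r+1} w_{i,k}/\prod_{k=j}^{r} w_{i+1,k+1}$ the remaining integrand factors as $\prod_\alpha (t_\alpha,\varpi)^{r_\alpha}\psi_{ij}(\varpi^{s_\alpha}t_\alpha)$ with the domain of integration a product $\prod_\alpha D_{i,j}$. Hence the cell integral factorizes as $\prod_\alpha I_\alpha(m,\sigma)$. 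The nontrivial bookkeeping here is checking that the statistics $r_{ij}$ and $s_{ij}$ really come out as claimed after collecting the Hilbert symbols and the shifts produced by the recursive torus conjugations; this is the routine but careful part, parallel to the corresponding computation in \cite{McN11}.

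Third, I would establish vanishing outside $\Lu(\lambda+\rho)$ using the lemma already in the excerpt: if $s_{ij} < -1$ then either $\psi_{ij}=\psi$ kills $I_{ij}$ directly, or $\psi_{ij}=1$ forces one to inspect $I_{i,j-1}$ instead, and a short descent on $j$ finds a vanishing factor. This reduces the sum to $m \in \Lu(\lambda+\rho)$, matching the index set $\LLu(\lambda+\rho,w,w')$ of colored Lusztig data (via the identification of colorings $\sigma \in \Sigma(m,w,w')$ with colored Lusztig data from \Cref{def:coloring}).

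Finally, I would compute $I_\alpha(m,\sigma)$ case by case using the definition of the normalized Gauss sum $g(a,b)$ and the fact that $\psi$ is trivial on $\OO$ and nontrivial on $\varpi^{-1}\OO$. This is exactly \Cref{lem:integralcalculation}, and a direct comparison of its output to \eqref{eq:lusztigcontribution} (translating the ``$D_{i,j}$'' conditions into the ``adjacent colors $a,b,c,d$'' conditions via \Cref{def:coloring} and the domain table) shows $I_\alpha(m,\sigma) = \G(\mathbf{m},\alpha;q)$. Substituting back yields exactly the claimed formula. The main obstacle is the careful verification that the case split on $(D_{i,j}, \psi_{ij}, s_{ij})$ in \Cref{lem:integralcalculation} matches the case split on $(C, a, b, c, d, D)$ in \eqref{eq:lusztigcontribution}; but this is a purely combinatorial translation that is built into the coloring procedure \Cref{procedure:coloring}, so once the dictionary is written out explicitly the identification is immediate.
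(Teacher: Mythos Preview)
Your proposal is correct and follows essentially the same approach as the paper: decompose $U$ into cells $S_{m,\sigma}^\Delta$ via \Cref{thm:celldecomposition}, plug in \Cref{lem:explicitf} and \Cref{lem:explicitpsi}, change variables to factorize $I(m,\sigma)=\prod_\alpha I_\alpha(m,\sigma)$, invoke the vanishing lemma to restrict to $\Lu(\lambda+\rho)$, evaluate each $I_\alpha$ via \Cref{lem:integralcalculation}, and then translate $(m,\sigma)$ and $\G(m,\sigma,\alpha)$ into colored Lusztig data and \eqref{eq:lusztigcontribution}. The paper's proof is exactly this chain of equalities, compressed into a five-line display.
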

    \begin{proof}
        We have
        \begin{align*}
            \phi_w(\lambda, w'; z) 
            &= \int_{U} f_w(uw')\psi_\lambda(u)\,du \\
            &= \sum_{m,\sigma} \int_{S_{m,\sigma}^{\Delta}}f(u)\psi_\lambda(u)\,du \\
            &= \sum_{m,\sigma}\left(\prod_{\alpha \in \Phi^+}z^{m_\alpha \alpha}\right)I(m,\sigma) \\
            &= \sum_{m,\sigma}\prod_{\alpha \in \Phi^+}z^{m_\alpha \alpha}\prod_{\alpha \in \Phi^+}I_\alpha(m,\sigma) \\
            &= \sum_{m,\sigma}\prod_{\alpha \in \Phi^+}\G(m,\sigma,\alpha)z^{m_\alpha \alpha}.
        \end{align*}
        By \Cref{section:coloreddata}, the pairs $(m,\sigma)$ are parametrized by colored Lusztig data $\LLu(\lambda+\rho,w,w')$ and the contributions $\G(m,\sigma,\alpha)$ from \cref{eq:contribution} can be written by $\G(\textbf{m},\alpha)$ from \cref{eq:lusztigcontribution}. It concludes the proof.
    \end{proof}
    
    \subsection{Other metaplectic covers}
    At the beginning of the section we chose a specific metaplectic cover that corresponds to the bilinear form $B$ defined by $B(e_i,e_j) = \delta_{ij}$. Now we explore the situation of arbitrary metaplectic cover. See \cite{Fre20} for details.
    
    Let $\widetilde{G}$ be a metaplectic cover of $G$ from \Cref{section:metaplecticgroups} corresponding to the bilinear linear form $B$. The main difference is that $l_{\alpha} = Q(\alpha^\vee) = B(\alpha^\vee, \alpha^\vee)/2$ enters the formula for multiplication on the torus $\widetilde{T}$ in \cref{eq:hrelations}:
    \[
        h_\alpha(x)h_\beta(y)h_\alpha(x)^{-1} = h_\beta(y)(x, y)^{B(\beta,\alpha^\vee)}. 
    \]
    
    Then in \Cref{lem:explicitf}, we have the explicit expression for function $f$ as follows:
    \[
        f(u)\,du = \left(\prod_{\alpha \in \Phi^+}z^{m_\alpha \alpha}(u_\alpha, \varpi)_{ij}^{m_\alpha + \sum_{\alpha <_\Delta \beta}B(\beta,\alpha^\vee) m_\beta} \right).
    \]
    Note that the only difference is that $\langle \beta, \alpha^\vee \rangle$ is replaced with $B(\beta,\alpha^\vee)$ which comes from the multiplication on the torus. It changes the formula for $I_\alpha(m,\sigma)$:
    \[
        I_\alpha(m,\sigma) = \int_{D_{ij}}(t_\alpha, \varpi)^{Q(\alpha^\vee) r_\alpha}\psi(\varpi^{s_\alpha}t_\alpha)\,dt_\alpha.
    \]
    
    It results the change in the formula for weights. Each instance of the normalized Gauss sum $g(r_{\alpha},s_{\alpha})$ will be change to $g(Q(\alpha^\vee)r_{\alpha}, s_{\alpha})$. This is the only change. 
    
    \section{Examples}\label{section:examples}
    In this section, we use \Cref{thm:miw} to compute values $\phi_w(\lambda,w'; z)$ given by \cref{eq:miw} which determine the metaplectic Iwahori Whittaker functions for the general linear group. For brevity, in all examples we write $t = -q^{-1}$. We write examples in terms of colored GT-patterns, but by \Cref{thm:bijection}, they are equivalent to colored Lusztig data or colored lattice models. 
    
    Let $\mu$ be a weight. We denote $a_{i,r+2} = \mu_{i}$ for each $i \in 1,\dots,r+1$. A GT-pattern with the top row $\mu$ looks like this: 
    \[
    	\left\{\begin{matrix}
    
        \mu_1 &  & \mu_2 & & \dots & & \mu_{r} & & \mu_{r+1}\\
    	& a_{1,r+1} & & a_{2,r+1} & \dots & a_{r-1,r+1} & & a_{r,r+1} & \\
    	&  & a_{1,r} & & \dots & & a_{r-1,r} & & \\
    	&  &  \ddots&  & &  &\udots &\\
    	&  &  & a_{1,3} & & a_{2,3} & & \\
    	&  & & & a_{1,2} & & & 
        
        \end{matrix}\right\}
    \]

    Recall that the statistics $s_{ij}$ are defined by
    \[
        s_{ij} = a_{ij} - a_{i+1,j+1} - 1.
    \]
    In particular, $s_{ij} = -1$ if an entry $a_{ij}$ is right-leaning; otherwise $s_{ij} \geq 0$. We also recall that statistics $r_{ij}$ are defined by
    \[
        r_{i,j} = \sum_{k \leq i}(a_{i,j+1}-a_{ij}).
    \]
    
    To have an additional check to our computations, we use
	\begin{theorem}[\cite{CS80}]
	    The value of the spherical Whittaker function on the diagonal element $\varpi^{\lambda}$ equals to
	    \begin{equation}\label{eq:csformula}
	        W_0(\varpi^\lambda) = \sum_{w \in W}W_w(\varpi^\lambda w') = \prod_{\alpha \in \Phi^+}(1+t z_\alpha)s_\lambda(z), \quad \text{for any $w' \in W$}.
	    \end{equation}
	\end{theorem}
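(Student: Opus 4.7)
My plan is twofold, matching the two equalities in the statement. The first equality is internal to the framework of this paper, while the second is the classical Casselman--Shalika formula.

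For the equality $W_0(\varpi^\lambda) = \sum_{w \in W}W_w(\varpi^\lambda w')$, I would combine \Cref{thm:mcn} and \Cref{lem:mcnamaracellrelation}. Namely, \Cref{thm:mcn} gives the McNamara decomposition $U = \bigsqcup_{m \in \N^N}C_m^{\textbf{i}}$, and \Cref{lem:mcnamaracellrelation} refines each $C_m$ via $C_m \cap \widetilde{B}w_0 w J(w')^{-1} = \bigsqcup_{\sigma \in \Sigma(m,w,w')}S_{m,\sigma}$. For fixed $w'$, the Bruhat pieces $\widetilde{B}w_0 w J(w')^{-1}$ are disjoint as $w$ ranges over $W$ and their union covers all of $\widetilde{G}$, so taking the union over $w$ reassembles $C_m$. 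Translating this to integrals and applying \Cref{thm:miw} and McNamara's Theorem~8.6 then gives $\phi_0(\lambda;z) = \sum_{w \in W}\phi_w(\lambda,w';z)$, independent of $w'$, as already noted in the remark following \Cref{thm:miw}.

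For the second equality, the cleanest route within this paper is to invoke the Casselman--Shalika formula directly from \cite{CS80}; alternatively one may derive it combinatorially by first applying McNamara's Theorem~8.6 to express $\phi_0(\lambda;z)$ as a sum over $\Lu(\lambda+\rho)$, then invoking the bijection of \Cref{lem:bijectionlusztiggt} to rewrite this as a sum over $\GT(\lambda+\rho)$, and finally applying Tokuyama's identity \cite{Tok88} (in the non-metaplectic case $n=1$, which is the setting of the stated formula) to collapse the sum over strict Gelfand--Tsetlin patterns into the product $\prod_{\alpha \in \Phi^+}(1+t z_\alpha)\,s_\lambda(z)$.

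The main obstacle is the first equality: one must verify that the disjoint union $\bigsqcup_{w \in W}\bigsqcup_{\sigma \in \Sigma(m,w,w')}S_{m,\sigma}$ exhausts $C_m$ without overlap for every fixed input $w'$. This reduces to the combinatorial claim that for each $m$ and each choice of $w' \in W$, \Cref{procedure:coloring} produces a complete binary branching of admissible colorings whose leaves are in bijection with the points of $C_m$ and whose outputs $w = \sigma_1$ partition the leaves according to Bruhat position. The verification is an unwinding of \Cref{lem:simplerelfections} and \Cref{lem:coloring}, checking that independence of the spherical integral from $w'$ matches the fact that for every $m$ the multiset $\{\sigma_1 : \sigma \in \Sigma(m, \cdot, w')\}$ is in bijection with the set of choices made at the ambiguous nodes in the coloring procedure; this is essentially the content of the remark following \Cref{lem:mcnamaracellrelation}.
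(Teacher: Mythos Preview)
The paper does not prove this theorem at all: it is stated with attribution to \cite{CS80} and used only as an external check on the examples. Your proposal therefore supplies substantially more than the paper does, and the argument you sketch is essentially correct.

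That said, your route to the first equality is heavier than necessary. The paper already records (in the remark following the definition of the Iwahori basis in \Cref{section:iwahorifunctions}) that $\phi_K^\chi = \sum_{w \in W}\phi_w^\chi$, which immediately gives $W_0(g) = \sum_{w \in W}W_w(g)$ for every $g \in \widetilde{G}$. Taking $g = \varpi^\lambda w'$ and using that $W_0$ is right $K$-invariant (so $W_0(\varpi^\lambda w') = W_0(\varpi^\lambda)$ since $w' \in K$) yields the first equality in one line. Your cell-decomposition argument via \Cref{thm:mcn} and \Cref{lem:mcnamaracellrelation} is valid and is indeed what underlies the remark $\phi_0(\lambda;z) = \sum_{w}\phi_w(\lambda,w';z)$ following \Cref{thm:miw}, but it is not needed here: the identity of vectors already does the work before any integration or decomposition. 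For the second equality, citing \cite{CS80} is precisely what the paper does; your Tokuyama alternative is a legitimate independent derivation in the non-metaplectic case.
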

	
	Therefore, in non-metaplectic case $n=1$, the sum over the Weyl group of Iwahori Whittaker functions will give us the closed Casselman-Shalika expression.
	
	Let $G = \GL_{2}$, so $r = 1$. We use the template
    $\left\{\begin{smallmatrix}
        a_{13} &        & a_{23} \\
               & a_{12} &      
    \end{smallmatrix}\right\}$ to read off statistics $s_{ij}$ and $r_{ij}$: we get $s_{12} = a_{12}-a_{23}-1$ and $r_{12} = a_{13}-a_{12}$. The Weyl group consists just of two elements: identity and $s_1$. The Weyl vector is $\rho = (1,0)$. 
    
    \begin{example}
        Let $G = \GL_{2}$, so $r = 1$. Let $\lambda = (0,0)$. Then $\lambda + \rho = (1,0)$. Here are all colored GT-patterns:
        
        \begin{description}
            \item[Input $w'=1$, output $w=1$]  
            $\left\{\begin{smallmatrix}
                {}_1 1 &        & {}_2 0 \\
                       & {}_2 0 &      
            \end{smallmatrix}\right\}$;
            \item[Input $w'=1$, output $w=s_1$]
            $\left\{\begin{smallmatrix}
                {}_1 1 &        & {}_2 0 \\
                       & {}_1 1 &      
            \end{smallmatrix}\right\}$;
            \item[Input $w'=s_1$, output $w=1$]
            $\left\{\begin{smallmatrix}
            {}_2 1 &        & {}_1 0 \\
                   & {}_2 1 &      
            \end{smallmatrix}\right\}$;
            \item[Input $w'=s_1$, output $w=s_1$]
            $\left\{\begin{smallmatrix}
            {}_2 1 &        & {}_1 0 \\
                   & {}_1 1 &      
            \end{smallmatrix}\right\}$ and $\left\{\begin{smallmatrix}
            {}_2 1 &        & {}_1 0 \\
                   & {}_1 0 &      
            \end{smallmatrix}\right\}$.
        \end{description}
        
        \Cref{thm:miw} and \cref{eq:contribution} provide the values of all Iwahori Whittaker functions:
        \[
            \begin{array}{ll}
                \phi_1(\lambda,1) = g(1,-1)z_1, & \phi_{s_1}(\lambda,1) = z_2,  \\
                \phi_1(\lambda,s_1) = (-t)z_2, & \phi_{s_1}(\lambda,s_1) = (1+t)z_2 + g(1,-1)z_1.
            \end{array}
        \]
        
        In a non-metaplectic case, $g(r_{12},-1) = t$. Then the calculation is consistent with \cref{eq:csformula}:
        \begin{align*}
            \phi_1(\lambda,1) + \phi_{s_1}(\lambda,1) = tz_1 + z_2\\
            \phi_1(\lambda,s_1) + \phi_{s_1}(\lambda,s_1) = (-t)z_2 + (1+t)z_2 + tz_1 = tz_1 + z_2.
        \end{align*}
    \end{example}
    
    More generally, we have
    \begin{example}
        Let $G = \GL_{2}$, so $r = 1$. Let $\lambda = (a,0)$ for $a \in \N$. Then $\lambda + \rho = (a+1,0)$. Here are all colored GT-patterns:
        
        \begin{description}
        \item[Input $w'=1$, output $w=1$]  
        $\left\{\begin{smallmatrix}
            {}_1 (a+1) &        & {}_2 0 \\
                   & {}_2 0 &      
        \end{smallmatrix}\right\}$, $\left\{\begin{smallmatrix}
            {}_1 (a+1) &        & {}_2 0 \\
                   & {}_2 1 &      
        \end{smallmatrix}\right\}$, \dots, $\left\{\begin{smallmatrix}
            {}_1 (a+1) &        & {}_2 0 \\
                   & {}_2 a &      
        \end{smallmatrix}\right\}$;
        \item[Input $w'=1$, output $w=s_1$]
        $\left\{\begin{smallmatrix}
            {}_1 (a+1) &        & {}_2 0 \\
                   & {}_1 (a+1) &      
        \end{smallmatrix}\right\}$;
        \item[Input $w'=s_1$, output $w=1$]
        $\left\{\begin{smallmatrix}
        {}_2 (a+1) &        & {}_1 0 \\
               & {}_2 (a+1) &      
        \end{smallmatrix}\right\}$;
        \item[Input $w'=s_1$, output $w=s_1$]
        $\left\{\begin{smallmatrix}
        {}_2 (a+1) &        & {}_1 0 \\
               & {}_1 0 &      
        \end{smallmatrix}\right\}$, $\left\{\begin{smallmatrix}
        {}_2 (a+1) &        & {}_1 0 \\
               & {}_1 1 &      
        \end{smallmatrix}\right\}$, \dots, $\left\{\begin{smallmatrix}
        {}_2 (a+1) &        & {}_1 0 \\
               & {}_1 a &      
        \end{smallmatrix}\right\}$, and \\ $\left\{\begin{smallmatrix}
        {}_2 (a+1) &        & {}_1 0 \\
               & {}_1 (a+1) &      
        \end{smallmatrix}\right\}$.
        \end{description}
        
        \Cref{thm:miw} and \cref{eq:contribution} provide values of all Iwahori Whittaker functions:
        \begin{align*}
            \phi_1(\lambda,1) &= g(a+1,-1)z_1^{a+1} + g(a,0)z_1^{a}z_2 + \dots + g(1,a-1)z_1z_2^a,\\
            \phi_{s_1}(\lambda,1) &= z_2^{a+1},\\
            \phi_1(\lambda,s_1) &= (-t)z_2^{a+1},\\
            \phi_{s_1}(\lambda,s_1) &=  g(a+1,-1)z_1^{a+1} + g(a,0)z_1^az_2 + \dots + g(1,a-1)z_1z_2^{a} + (1+t)z_2^{a+1}.
        \end{align*}
        In a non-metaplectic case, $g(r_{12},-1) = t$ and $g(r_{12},0) = (1+t)$ for any $r_{12}$. Then the calculation is consistent with \cref{eq:csformula}:
        \begin{align*}
            \phi_1(\lambda,1) + \phi_{s_1}(\lambda,1) &= tz_1^{a+1} + (1+t)z_1^az_2 + \dots + (1+t)z_1z_2^a + z_2^{a+1} \\
            &= (tz_1 + z_2)s_\lambda(z)\\
            \phi_1(\lambda,s_1) + \phi_{s_1}(\lambda,s_1) &= (-t)z_2^{a+1} + tz_1^{a+1} + (1+t)z_1^az_2 + \dots + (1+t)z_1z_2^a + (1+t)z_2^{a+1} \\
            &= (tz_1 + z_2)s_\lambda(z),
        \end{align*}
        where $s_\lambda(z) = z_1^{a}+z_1^{a-1}z_2 + \dots + z_2^a$. 
    \end{example}
    
    The next example shows that $\lambda$ should not necessary be a dominant weight to give a non-zero Iwahori Whittaker function. But by \Cref{lem:almostcondition}, $W_w(g) = 0$ unless $\lambda$ is $w'$-almost dominant. 
    
    \begin{example}
        Let $G = \GL_{2}$, so $r=1$. Let $\lambda = (0, 1)$. Then $\lambda + \rho = (1,1)$. There are only two colored GT-patterns: 
        
        \begin{description}
        \item[Input $w'=s_1$, output $w=1$]
        $\left\{\begin{smallmatrix}
        {}_2 1 &        & {}_1 1 \\
               & {}_2 1 &      
        \end{smallmatrix}\right\}$.
        \item[Input $w'=s_1$, output $w=s_1$]  
        $\left\{\begin{smallmatrix}
        {}_2 1 &        & {}_1 1 \\
               & {}_1 1 &      
        \end{smallmatrix}\right\}$.
        
        \end{description}
        
        We get the values of Iwahori Whittaker functions:
        \begin{equation*}
            \phi_1(\lambda,s_1) = -tz_1, \quad \phi_{s_1}(\lambda,s_1) = tz_1.
        \end{equation*}
        
        The sum of Iwahori Whittaker function is zero which is consistent with \cref{eq:csformula} as the spherical Whittaker function is zero for non-dominant weights.
    \end{example}
    
    \begin{remark}\label{rem:cancellations}
    	Note the non-trivial cancellations happen when we have choice for coloring an entry. These cancellations are ``invisible'' when calculating the spherical Whittaker function directly. In particular, a non-strict GT-pattern that has zero contribution to the spherical Whittaker function splits to two terms, each giving a non-zero contribution to the corresponding Iwahori component. Recall that the coloring choice comes from the splitting of $\OO$ into $\OO^\times$ and $\p$ when computing Iwahori Whittaker functions.
    \end{remark}
    
    \begin{example}
        Let $\lambda$ be a weight, $w,w'$ be permutations such that $\lambda$ is $w'$-almost dominant and $w = w_0w'$. Then $\phi_w(\lambda,w') = (q^{-1})^{l(w)} z^{\lambda+\rho}$ as there is only one admissible state in the colored data with output equals to the inverse of the input. It is Proposition 3.6 in \cite{BBBG19}.
    \end{example}
    
    For the next example, we write a table of all colored GT-patterns together with the corresponding colored Lusztig data to demonstrate the weight-preserving bijection.
    
    \begin{example}
        Let $G = \GL_{3}$, so $r = 2$. Let $\lambda = (1,0,0)$, and let the input be $w'=1$. Let us work in the non-metaplectic case $n = 1$. Then $g(r_{ij}, -1) = t$ and $g(r_{i,j},0) = (1+t)$ for any $r_{i,j}$. It is convenient to write all colored Lusztig data $\LLu(\lambda+\rho)$ with input $w'$ in a table together with weights and contributions. Then the value $\phi_w(\lambda,w'; z)$ is the corresponding sum over rows with output $w \in W$. See the table on the next page. This time \Cref{thm:miw} and \cref{eq:contribution} provide all diagonal values of the Iwahori Whittaker functions:
        
        \begin{align*}
            \phi_1(\lambda, 1) &= (-t)(1+t)z_1^2z_2z_3 + (-t)(1+t)z_1z_2z_3^2 + t(1+t)^2z_1z_2z_3^2 + t^3z_2^3z_3 + t^2(1+t)z_2^2z_3^2, \\
            \phi_{s_1}(\lambda, 1) &= t^2z_2^3z_3 + t^2z_1z_2^2z_3 + t(1+t)z_1^2z_2z_3,\\
            \phi_{s_2}(\lambda, 1) &= t^2z_1z_3^3 + (-t)(1+t)z_1z_2z_3^2 + t(1+t)z_1^2z_3^2 + t(1+t)z_1z_2^2z_3 + (-1)(1+t)z_1^2z_2z_3, \\
            \phi_{s_1s_2}(\lambda, 1) &= tz_1z_2^3 + (1+t)z_1^2z_2^2, \\
            \phi_{s_2s_1}(\lambda, 1) &= tz_1^3z_3, \\
            \phi_{s_2s_1s_2}(\lambda, 1) &= z_1^3z_2.
        \end{align*}
    	
    	Note that we again can match the Casselman-Shalika formula:
    	\begin{align*}
    		\sum_{w \in W}\phi_w(\lambda, 1) &= z^{-\rho}(1+tz_2/z_1)(1+tz_3/z_1)(1+tz_3/z_2)(z_1+z_2+z_3) \\
    		&= z^{-\rho}\prod_{\alpha \in \Phi^+}(1+t z_\alpha)s_\lambda(z).
    	\end{align*}
    \end{example}
    
    \afterpage{%
    \[\arraycolsep=6pt\def\arraystretch{1.8}
    \resizebox{\textwidth}{!}{$\displaystyle
        \begin{array}{|c|c|c|c|c|}
    
    \hline
    
    \GGT(\lambda+\rho) & \LLu(\lambda+\rho) & \text{output} & z^{\lambda+\rho}z^{m} & \text{weights }G(\textbf{m},\alpha)\\
    
    \hline
    	
    \left\{\begin{smallmatrix}
    	{}_13 &   & {}_21 &   & {}_30\\
    	& {}_2 1 &   & {}_3 0 &  \\
    	&   & {}_3 0 &   &
    \end{smallmatrix}\right\} &
    \left\{\begin{smallmatrix}
    	 - & -  & -\\
    	   & {}_2 2  & {}_3 1\\
    	   &    & {}_3 1
    \end{smallmatrix}\right\} &
    \begin{psmallmatrix}
        1 & 2 & 3 \\
        1 & 2 & 3
    \end{psmallmatrix} = 1 & 
    z_2z_3^3 &
    \left\{\begin{smallmatrix}
        t & t \\
          & t
    \end{smallmatrix}\right\}
    
    \\
    
    \left\{\begin{smallmatrix}
    	{}_13 &   & {}_21 &   & {}_30\\
    	& {}_2 1 &   & {}_3 0 &  \\
    	&   & {}_2 1 &   &
    \end{smallmatrix}\right\} &
    \left\{\begin{smallmatrix}
    	- & -  & -\\
    	& {}_2 2  & {}_3 1\\
    	&    & {}_2 0
    \end{smallmatrix}\right\} &
    \begin{psmallmatrix}
        1 & 2 & 3 \\
        1 & 3 & 2
    \end{psmallmatrix} = s_2 & 
    z_1z_3^3 &
    \left\{\begin{smallmatrix}
        t & t \\
          & 1
    \end{smallmatrix}\right\}

    \\
    
    \left\{\begin{smallmatrix}
    	{}_13 &   & {}_21 &   & {}_30\\
    	& {}_3 1 &   & {}_2 1 &  \\
    	&   & {}_2 1 &   &
    \end{smallmatrix}\right\} &
    \left\{\begin{smallmatrix}
    	- & -  & -\\
    	& {}_3 2  & {}_2 0\\
    	&    & {}_2 0
    \end{smallmatrix}\right\} &
    \begin{psmallmatrix}
        1 & 2 & 3 \\
        1 & 3 & 2
    \end{psmallmatrix} = s_2 & 
    z_1z_2z_3^2 &
    \left\{\begin{smallmatrix}
        1+t & 1 \\
          & -t
    \end{smallmatrix}\right\}

    \\
    
    \left\{\begin{smallmatrix}
    	{}_13 &   & {}_21 &   & {}_30\\
    	& {}_3 1 &   & {}_2 1 &  \\
    	&   & {}_3 1 &   &
    \end{smallmatrix}\right\} &
    \left\{\begin{smallmatrix}
    	- & -  & -\\
    	& {}_3 2  & {}_2 0\\
    	&    & {}_3 0
    \end{smallmatrix}\right\} &
    \begin{psmallmatrix}
        1 & 2 & 3 \\
        1 & 2 & 3
    \end{psmallmatrix} = 1 & 
    z_1z_2z_3^2 &
    \left\{\begin{smallmatrix}
        1+t & 1 \\
          & t
    \end{smallmatrix}\right\}

    \\
    
    \left\{\begin{smallmatrix}
    	{}_13 &   & {}_21 &   & {}_30\\
    	& {}_2 2 &   & {}_3 0 &  \\
    	&   & {}_3 0 &   &
    \end{smallmatrix}\right\} &
    \left\{\begin{smallmatrix}
    	- & -  & -\\
    	& {}_2 1  & {}_3 1\\
    	&    & {}_3 2
    \end{smallmatrix}\right\} &
    \begin{psmallmatrix}
        1 & 2 & 3 \\
        1 & 2 & 3
    \end{psmallmatrix} = 1 & 
    z_2^2z_3^2 &
    \left\{\begin{smallmatrix}
        1+t & t \\
          & t
    \end{smallmatrix}\right\}

    \\
    
    \left\{\begin{smallmatrix}
    	{}_13 &   & {}_21 &   & {}_30\\
    	& {}_2 2 &   & {}_3 0 &  \\
    	&   & {}_3 1 &   &
    \end{smallmatrix}\right\} &
    \left\{\begin{smallmatrix}
    	- & -  & -\\
    	& {}_2 1  & {}_3 1\\
    	&    & {}_3 1
    \end{smallmatrix}\right\} &
    \begin{psmallmatrix}
        1 & 2 & 3 \\
        1 & 2 & 3
    \end{psmallmatrix} = 1 & 
    z_1z_2z_3^2 &
    \left\{\begin{smallmatrix}
        1+t & t \\
          & 1+t
    \end{smallmatrix}\right\}

    \\
    
    \left\{\begin{smallmatrix}
    	{}_13 &   & {}_21 &   & {}_30\\
    	& {}_2 2 &   & {}_3 0 &  \\
    	&   & {}_2 2 &   &
    \end{smallmatrix}\right\} &
    \left\{\begin{smallmatrix}
    	- & -  & -\\
    	& {}_2 1  & {}_3 1\\
    	&    & {}_2 0
    \end{smallmatrix}\right\} &
    \begin{psmallmatrix}
        1 & 2 & 3 \\
        1 & 3 & 2
    \end{psmallmatrix} = s_2 & 
    z_1^2z_3^2 &
    \left\{\begin{smallmatrix}
        1+t & t \\
          & 1
    \end{smallmatrix}\right\}

    \\
    
    \left\{\begin{smallmatrix}
    	{}_13 &   & {}_21 &   & {}_30\\
    	& {}_3 2 &   & {}_2 1 &  \\
    	&   & {}_2 1 &   &
    \end{smallmatrix}\right\} &
    \left\{\begin{smallmatrix}
    	- & -  & -\\
    	& {}_3 1  & {}_2 0\\
    	&    & {}_2 1
    \end{smallmatrix}\right\} &
    \begin{psmallmatrix}
        1 & 2 & 3 \\
        1 & 3 & 2
    \end{psmallmatrix} = s_2 & 
    z_1z_2^2z_3 &
    \left\{\begin{smallmatrix}
        1+t & 1 \\
          & t
    \end{smallmatrix}\right\}

    \\
    
    \left\{\begin{smallmatrix}
    	{}_13 &   & {}_21 &   & {}_30\\
    	& {}_3 2 &   & {}_2 1 &  \\
    	&   & {}_2 2 &   &
    \end{smallmatrix}\right\} &
    \left\{\begin{smallmatrix}
    	- & -  & -\\
    	& {}_3 1  & {}_2 0\\
    	&    & {}_2 0
    \end{smallmatrix}\right\} &
    \begin{psmallmatrix}
        1 & 2 & 3 \\
        1 & 3 & 2
    \end{psmallmatrix} = s_2 & 
    z_1^2z_2z_3 &
    \left\{\begin{smallmatrix}
        1+t & 1 \\
          & -t
    \end{smallmatrix}\right\}

    \\
    
    \left\{\begin{smallmatrix}
    	{}_13 &   & {}_21 &   & {}_30\\
    	& {}_3 2 &   & {}_2 1 &  \\
    	&   & {}_3 2 &   &
    \end{smallmatrix}\right\} &
    \left\{\begin{smallmatrix}
    	- & -  & -\\
    	& {}_3 1  & {}_2 0\\
    	&    & {}_3 0
    \end{smallmatrix}\right\} &
    \begin{psmallmatrix}
        1 & 2 & 3 \\
        1 & 2 & 3
    \end{psmallmatrix} = 1 & 
    z_1^2z_2z_3 &
    \left\{\begin{smallmatrix}
        1+t & 1 \\
          & 1+t
    \end{smallmatrix}\right\}

    \\
    
    \left\{\begin{smallmatrix}
    	{}_13 &   & {}_21 &   & {}_30\\
    	& {}_1 3 &   & {}_3 0 &  \\
    	&   & {}_3 0 &   &
    \end{smallmatrix}\right\} &
    \left\{\begin{smallmatrix}
    	- & -  & -\\
    	& {}_1 0  & {}_3 1\\
    	&    & {}_3 3
    \end{smallmatrix}\right\} &
    \begin{psmallmatrix}
        1 & 2 & 3 \\
        2 & 1 & 3
    \end{psmallmatrix} = s_1 & 
    z_2^3z_3 &
    \left\{\begin{smallmatrix}
        1 & t \\
          & t
    \end{smallmatrix}\right\}

    \\
    
    \left\{\begin{smallmatrix}
    	{}_13 &   & {}_21 &   & {}_30\\
    	& {}_1 3 &   & {}_3 0 &  \\
    	&   & {}_3 1 &   &
    \end{smallmatrix}\right\} &
    \left\{\begin{smallmatrix}
    	- & -  & -\\
    	& {}_1 0  & {}_3 1\\
    	&    & {}_3 2
    \end{smallmatrix}\right\} &
    \begin{psmallmatrix}
        1 & 2 & 3 \\
        2 & 1 & 3
    \end{psmallmatrix} = s_1 & 
    z_1z_2^2z_3 &
    \left\{\begin{smallmatrix}
        1 & t \\
          & 1+t
    \end{smallmatrix}\right\}

    \\
    
    \left\{\begin{smallmatrix}
    	{}_13 &   & {}_21 &   & {}_30\\
    	& {}_1 3 &   & {}_3 0 &  \\
    	&   & {}_3 2 &   &
    \end{smallmatrix}\right\} &
    \left\{\begin{smallmatrix}
    	- & -  & -\\
    	& {}_1 0  & {}_3 1\\
    	&    & {}_3 1
    \end{smallmatrix}\right\} &
    \begin{psmallmatrix}
        1 & 2 & 3 \\
        2 & 1 & 3
    \end{psmallmatrix} = s_1 & 
    z_1^2z_2z_3 &
    \left\{\begin{smallmatrix}
        1 & t \\
          & 1+t
    \end{smallmatrix}\right\}

    \\
    
    \left\{\begin{smallmatrix}
    	{}_13 &   & {}_21 &   & {}_30\\
    	& {}_1 3 &   & {}_3 0 &  \\
    	&   & {}_1 3 &   &
    \end{smallmatrix}\right\} &
    \left\{\begin{smallmatrix}
    	- & -  & -\\
    	& {}_1 0  & {}_3 1\\
    	&    & {}_1 0
    \end{smallmatrix}\right\} &
    \begin{psmallmatrix}
        1 & 2 & 3 \\
        2 & 3 & 1
    \end{psmallmatrix} = s_2s_1 & 
    z_1^3z_3 &
    \left\{\begin{smallmatrix}
        1 & t \\
          & 1
    \end{smallmatrix}\right\}

    \\
    
    \left\{\begin{smallmatrix}
    	{}_13 &   & {}_21 &   & {}_30\\
    	& {}_1 3 &   & {}_2 1 &  \\
    	&   & {}_2 1 &   &
    \end{smallmatrix}\right\} &
    \left\{\begin{smallmatrix}
    	- & -  & -\\
    	& {}_1 0  & {}_2 0\\
    	&    & {}_3 2
    \end{smallmatrix}\right\} &
    \begin{psmallmatrix}
        1 & 2 & 3 \\
        3 & 1 & 2
    \end{psmallmatrix} = s_1s_2 & 
    z_1z_2^3 &
    \left\{\begin{smallmatrix}
        1 & 1 \\
          & t
    \end{smallmatrix}\right\}

    \\
    
    \left\{\begin{smallmatrix}
    	{}_13 &   & {}_21 &   & {}_30\\
    	& {}_1 3 &   & {}_2 1 &  \\
    	&   & {}_2 2 &   &
    \end{smallmatrix}\right\} &
    \left\{\begin{smallmatrix}
    	- & -  & -\\
    	& {}_1 0  & {}_2 0\\
    	&    & {}_2 1
    \end{smallmatrix}\right\} &
    \begin{psmallmatrix}
        1 & 2 & 3 \\
        3 & 1 & 2
    \end{psmallmatrix} = s_1s_2 & 
    z_1^2z_2^2 &
    \left\{\begin{smallmatrix}
        1 & 1 \\
          & 1+t
    \end{smallmatrix}\right\}

    \\
    
    \left\{\begin{smallmatrix}
    	{}_13 &   & {}_21 &   & {}_30\\
    	& {}_1 3 &   & {}_2 1 &  \\
    	&   & {}_1 3 &   &
    \end{smallmatrix}\right\} &
    \left\{\begin{smallmatrix}
    	- & -  & -\\
    	& {}_1 0  & {}_2 0\\
    	&    & {}_1 0
    \end{smallmatrix}\right\} &
    \begin{psmallmatrix}
        1 & 2 & 3 \\
        3 & 2 & 1
    \end{psmallmatrix} = s_2s_1s_2 & 
    z_1^3z_2 &
    \left\{\begin{smallmatrix}
        1 & 1 \\
          & 1
    \end{smallmatrix}\right\}\\
    
    \hline
    
    \end{array}$}
    \]
    \clearpage
    }
    
    \newpage

    \appendix
    \section{Metaplectic groups and Iwahori Whittaker Functions}\label{section:miwappendix}
	Our main object of study is the Iwahori Whittaker functions which are certain matrix coefficients of an unramified genuine principal series representations of metaplectic covers of a split reductive group over a non-archimedean field. We remark that in all discussions one can set the degree of the cover to be one and obtain results about the split reductive group itself. We give only the definitions we directly need, and for the details on representation theory side we refer to \cite{McN11}[Sections 2-5], \cite{McN12}, \cite{Moo68}, and \cite{Ste68}.

    \subsection{Metaplectic groups}\label{section:metaplecticgroups}
    The discussion of metaplectic groups is based on \cite{McN11}, \cite{McN12}, \cite{McN16}. As usual, we start with the way too familiar wall of text.
    
    Let $G$ be a split (connected) reductive group over a non-archimedian local field $F$ with ring of integers $\OO_F$. Let $\p$ be the maximal ideal of $\OO_F$ with uniformizer $\varpi \in \p$. Denote by $q$ the cardinality of $\OO_F/\p$, and the residue field by $\mathbb{F}_q = \OO_F/\p$. Let $T$ be a fixed split maximal torus of $G$, and let $\widehat{T}$ be the corresponding maximal torus of $\widehat{G}$, the Langlads dual group. Let $B$ be a Borel subgroup of $G$ containing $T$, $U$ be its unipotent radical, and let $K = G(\OO_F)$ be a maximal compact subgroup of $G$. Let $B^-, U$ be the corresponding opposite subgroups. 
    
    Let $\Phi$ be the root system of $G$ which we assume to be irreducible. Then $\Phi$ is a subset of character group $X^*(T)$. The dual root system $\Phi^\vee$ is a subset of the cocharacter group $X_*(T)$ which is identified with $X^*(\widehat{T})$. We use $\langle \cdot, \cdot \rangle \colon \Phi \times \Phi^\vee \to \Z$ to denote the canonical pairing between $\Phi$ and $\Phi^\vee$. If $\alpha \in \Phi$, the corresponding element of $\Phi^\vee$ is denoted $\alpha^\vee$. We will denote $\Lambda = X_*(T) = X^*(\widehat{T})$. Let $I$ the finite index set of simple roots. 
    
    \textit{A metaplectic $n$-fold cover of $G$} is a central extension of $G$ by $\mu_n$:
    \[
    1 \longrightarrow \mu_n \longrightarrow \widetilde{G} \xrightarrow{\mathmakebox[12pt]{p}} G(F) \longrightarrow 1.
    \]
    As a set, $\widetilde{G} = G \times \mu_n$ with the group multiplication given by a cocycle in $H^2(G, \mu_n)$. Here $p$ is the natural projection map $\widetilde{G} \to G(F)$. For any subgroup $H$ of $G$, we denote by $\widetilde{H}$ the induced covering group of $H$. E.g., $\widetilde{B} = p^{-1}(B)$ and $\widetilde{T} = p^{-1}(T)$. 
    
    If $x \in F$ and $\lambda \in \Lambda$, we will denote the image of $x$ in $T$ under $\lambda$ by $x^\lambda$. By abuse of notation, we will denote a representative in $\widetilde{T}$ by the same symbol $x^\lambda$.
    
    Let $n$ be a positive integer such that $q \equiv 1 \pmod{2n}$. It implies that $F^\times$ contains $2n$ distinct $2n$-th roots of unity. Let $\mu_n$ be the group of $n$-th roots of unity. Fix a faithful character $\varepsilon\colon \mu_n \to \C^\times$. Let $(\cdot, \cdot)\colon F^\times \times F^\times \to \mu_n$ be the $n$-th power Hilbert symbol. 
    
    Let $B \colon \Lambda \times \Lambda \to \Z$ be an $W$-invariant symmetric bilinear form on $\Lambda$ such that $Q(\alpha^\vee) \coloneqq B(\alpha^\vee, \alpha^\vee)/2 \in \Z$ for all coroots $\alpha^\vee$. (There will never be any possible confusion between this use of the symbol $B$ and its use for a Borel subgroup). 
    
    Under these assumptions on $n$ and bilinear form $B$, by Theorem 3.2 of \cite{McN12}, there exists an $n$-fold metaplectic cover $\widetilde{G}$ of $G(F)$ such that 
    \[
    	[x^\lambda, y^\mu] = (x, y)^{B(\lambda, \mu)}, \quad x,y \in F, \quad \lambda,\mu \in \Lambda,
    \]
    where $[\cdot, \cdot]$ is the group commutator and $(\cdot, \cdot) \in \mu_n$ is the $n$-th power Hilbert symbol. The identity does not depend on the choice of representatives for $x^\lambda$ and $y^\mu$. 
    
    Fix such a metaplectic cover $\widetilde{G}$ of $G(F)$.
    
    \subsection{Unramified genuine principal series}
    We recall the construction of the unramified genuine principal series representation of metaplectic group $\widetilde{G}$. We follow \cite{McN11, McN12, McN16}.
    
    Let $T(\OO_F) = \widetilde{T} \cap K$. Since the cover splits over $K$, the group $T(\OO_F)$ may be regarded as a subgroup of $G(F)$. Let $H = C_{\widetilde{T}}(T(\OO_F))$, the centralizer of $T(\OO_F)$ in $\widetilde{T}$. It is a maximal abelian subgroup of $\widetilde{T}$ by Lemma 1 of \cite{McN12}. 
    
    A function $f$ on $\widetilde{G}$ is called \textit{genuine} if $f(\zeta g) = \epsilon(\zeta)f(g)$ for $\zeta \in \mu_n$, $g \in \widetilde{G}$. A genuine quasicharacter of $H$ is called \textit{unramified} if it is trivial on $T(\OO_F)$. Let $\chi$ be an unramified quasicharacter of $H$, and set
    \[
    	i(\chi) = \Ind_H^{\widetilde{T}}(\chi) = \{f \colon \widetilde{T} \to \C \mid f(ht) = \chi(h)f(t) \text{ for all } t \in \widetilde{T}, h \in H\}.
    \]
    The group $\widetilde{T}$ acts on $i(\chi)$ by right translation. Denote this representation as $(\pi_\chi, i(\chi))$. It is an irreducible finite-dimensional $\widetilde{T}$-module by Theorem 5.1 of \cite{McN12}. Next, we inflate $i(\chi)$ from $\widetilde{T}$ to $\widetilde{B}$ and then induce (with normalization) to $\widetilde{G}$ to obtain 
    \[
    	I(\chi) = \Ind_{\widetilde{B}}^{\widetilde{G}}(i(\chi)) = \{\text{smooth } f\colon \widetilde{G} \to i(\chi) \mid f(bg) = (\delta^{1/2}\chi)(b)f(g) \text{ for all } b \in \widetilde{B}, g \in \widetilde{G}\},
    \]
    where $\delta$ is the modular quasicharacter of $\widetilde{B}$. The group $\widetilde{G}$ acts by right multiplication on $I(\chi)$. This representation is called \textit{unramified genuine principal series representation of $\widetilde{G}$} and denoted by $(\pi_\chi, I(\chi))$. We assume that $I(\chi)$ is irreducible.
    
    By Lemma 6.3 of \cite{McN12}, $I(\chi)$ has a one-dimensional space of $K$-fixed vectors. Fix a non-zero vector $\phi_K^\chi \in I(\chi)^K$ which is called a \textit{spherical vector}.
    
    \subsection{Whittaker functionals on metaplectic covers}
    We define Whittaker functionals on $\widetilde{G}$ following \cite{McN12}, \cite{McN16}.
    
    Choose Haar measure on $F$ such that $\OO$ has volume $1$, and denote it by $dx$. Choose a normalization of Haar measure on $U$ such that $du = \prod_{i=1}^{N}dx_i$.
    
    Fix a non-degenerate character $\psi\colon U \to \C$ of $U$ such that the restriction to the subgroup $U_{-\alpha} = \{e_{-\alpha}(x) \mid x \in F\} \cong F$ for each simple root $\alpha$ has conductor $\OO_F$, that is, trivial on $\OO_F^\times$, but non-trivial on $\varpi^{-1}\OO_F$. 
    
    \textit{A (complex-valued) Whittaker functional (with respect to $\psi$)} on a representation $(\pi, V)$ of $\widetilde{G}$ is a linear functional $W \colon V \to \C$ such that $W(\pi(u)v) = \psi(u)W(v)$ for all $u \in U$, $v \in V$.
    
    By \cite{McN16} (Section 6), there is a unique (up to constant) $i(\chi)$-valued function $W^\chi\colon I(\chi) \to i(\chi)$ on $(\pi_\chi, I(\chi))$ given by the integral
    \[
    	W^\chi(\phi) = \int_{U}\phi(u)\psi(u)^{-1}du.
    \]
    
    The integral is convergent if $|z|^{\alpha^\vee} < 1$ for all positive roots $\alpha$, and can be extended to all $z$ by analytic continuation. 
    
    To get a Whittaker functional we need to compose $W^\chi$ with a linear functional on $i(\chi)$. More precisely, there is an isomorphism between $i(\chi)^*$ and the space of $\C$-valued Whittaker functionals on $I(\chi)$ given by composition (Theorem 6.2 of \cite{McN16})
    \[
    	L \mapsto L \circ W^\chi, \quad L \in i(\chi)^*.
    \]
    Thus, the dimension of the space of Whittaker functionals on $I(\chi)$ is $\dim(i(\chi)) = |\widetilde{T}/H|$ by Theorem 8.1 of \cite{McN12}. 
    
    \begin{remark}
    	Note that in the non-metaplectic case ($n=1$), the space of Whittaker functionals is one-dimensional. In the metaplectic case ($n > 1$) we have a richer theory of Whittaker functionals.
    \end{remark}
    
    Now we construct a natural basis of the Whittaker functionals. Let $v_0 = \phi_K^\chi(1) \in i(\chi)$ and let $\Gamma$ be a subset of coweights such that $\{\varpi^\gamma\}_{\gamma \in \Gamma}$ is a set of coset representatives for $\widetilde{T}/H$. When unambiguous, we identify $\gamma \in \Gamma$ with this set. Then vectors $\{\pi_\chi(\varpi^\gamma)v_0\}_{\gamma \in \Gamma}$ form a basis of $i(\chi)$. Let $L^\chi_\gamma$ for $\gamma \in \Gamma$ denote the corresponding dual basis of $i(\chi)^*$, so we have 
    \[
    L_\gamma^\chi(\pi_\chi(\varpi^\lambda)v_0) = \begin{cases}
    	\chi(\varpi^{\lambda}), &\text{if $\varpi^{\lambda - \gamma} \in H$},\\
    	0, &\text{otherwise}
    \end{cases}.
    \]
    
    By above, the space of Whittaker functionals on $I(\chi)$ has the following basis
    \[
    	W^\chi_\gamma = L^\chi_\gamma \circ W^\chi, \quad \gamma \in \Gamma.
    \]
    
    \subsection{Iwahori Whittaker function for metaplectic covers}\label{section:iwahorifunctions}
    
    Define the Iwahori subgroup $J=J$ of $G(F)$ as the preimage of $B^-(\mathbb{F}_q)$ under the mod $\p$ reduction $G(\OO_F) \to G(\mathbb{F}_q)$. The space of Iwahori fixed vectors $I(\chi)^J$ has dimension $|W|$. For each $w \in W$, define an Iwahori fixed vector $\phi_w^\chi \in I(\chi)^J$ by
    \[
    \phi_w^\chi\left(\zeta b w'j\right) = 
    \begin{cases}
    	\varepsilon(\zeta)\phi_K^\chi(b), &\text{if $w' = w_0w$},\\
    	0, &\text{otherwise}
    \end{cases}.
    \]
    where $\phi_K$ is the spherical fixed above, and $\zeta \in \mu_n$, $b \in \widetilde{B}$, $w' \in W$, and $j \in J$; we use the metaplectic Iwahori decomposition $\widetilde{G} = \widetilde{B}WJ$ to express an arbitrary element as product of such terms. Note the twist by $w_0$ in the definition. By construction, $\phi_w^\chi$ form a basis of $I(\chi)^J$, and are sometimes called \textit{the standard Iwahori basis.}
    
    \begin{remark}
    	Note that $\phi_K^\chi = \sum_{w \in W}\phi_w^\chi$. Thus, we have a refinement of a spherical vector $\phi_K^\chi$, and we can obtain information about the spherical vector by summing over Iwahori basis.
    \end{remark}
    
    Now we can define the main object of our study, the (metaplectic) Iwahori Whittaker functions $\Omega_{w,\gamma}^\chi\colon \widetilde{G} \to \C$, which are the matrix coefficients on $I(\chi)$ corresponding to Whittaker basis functionals $W_\gamma^\chi$ and Iwahori basis vectors $\phi_w^\chi$. Explicitly, 
    \[
    	\Omega^\chi_{w,\gamma}(g) \coloneqq W^\chi_\gamma(\pi_\chi(g)\phi_w^\chi) = L^\chi_\gamma\left(\int_{U}\phi_w(u g)\psi(u)^{-1}\,du\right).
    \] 
    
    Note that $L_\gamma^\chi$ commutes with the integral. Write $g = utw'j$ with $u \in U^+$, $t \in \widetilde{T}$, $w' \in W$, and $j \in J$, and write $t = \varpi^\mu h$ for $\varpi^\mu \in \widetilde{T}/H$, $\mu \in \Gamma$, and $h \in H$. For convenience, let us write $L_\gamma^\chi \circ \phi_w^\chi$ as follows 
    \begin{align*}
    	L_\gamma^\chi(\phi_w^\chi(g))
    	&= L_\gamma^\chi(\phi_w^\chi(\varpi^\mu h)) \\
    	&= L_\gamma^\chi(\pi_\chi(\varpi^\mu)\phi_w^\chi(h)) \\
    	&= \begin{cases}
    	    L_\gamma^\chi(\delta^{1/2}(b)\chi(b)v_0), &\text{if $w = w_0w'$}\\
    	    0, &\text{otherwise}
    	\end{cases} \\
    	&= \begin{cases}
    	    \delta^{1/2}(b) L_\gamma^\chi(\chi(\varpi^{\mu})\chi(h) v_0), &\text{if $w = w_0w'$}\\
    	    0, &\text{otherwise}
    	\end{cases} \\
    	&= \begin{cases}
    		\delta^{1/2}(b)\chi(\varpi^\lambda), &\text{if $w = w_0w'$ and $\varpi^{\lambda-\gamma} \in H$}\\
    		0, &\text{otherwise}
    	\end{cases}.
    \end{align*}
    
    Denote $\chi(\varpi^\lambda) = z^\lambda$ for $z \in \C^{r+1}$ for each $\lambda \in \Lambda$. Then we write $f_{w,\gamma}^z = L_\gamma^\chi \circ \phi_w^\chi$. Note that $\chi$ does not depend uniquely on $z$: in particular, if $z^n = (z')^n$, then the corresponding representations are isomorphic. Furthermore, $I(\chi)$ is irreducible if and only if $z^{n\alpha} \neq q^{\pm 1}$ for all coroots $\alpha$. Finally, denote $f_w = f_w^z = \sum_{\gamma \in \Gamma}f_{w,\gamma}^z$, and $f = f^z = \sum_{w \in W}f_w^z$. Then in notaion above, $f_w(g) = f(g) = \delta^{1/2}(b)\chi(\varpi^\lambda)$ if $w = w_0w'$ and zero otherwise.
    
    The \textit{average metaplectic Iwahori Whittaker Function} $W_w = W_w^\chi \colon \widetilde{G} \to \C$ is given by
    \[
        W_w(g) = \int_{U}f_w(ug)\psi(u)\,du.
    \]
    
    \begin{definition}[Definition 2.1, \cite{BBBG20}]
        Let $w' \in W$, let $\alpha_i$ be a simple root and $\alpha_i^\vee$ the corresponding coroot. A weight $\lambda$ is $w'$-almost dominant if 
        \[
            \langle \alpha_i^\vee, \lambda \rangle \geq \begin{cases}
                0, \quad \text{if $(w')^{-1}\alpha_i \in \Phi^+$},\\
                -1,\quad \text{if $(w')^{-1}\alpha_i \in \Phi^-$}.
            \end{cases} \quad \text{for all simple roots $\alpha_i$}.
        \]
    \end{definition}
    
    \begin{lemma}[Lemma 3.7, \cite{BBBG20}]\label{lem:almostcondition}
        Let $\lambda \in \Lambda$ and $w' \in W$. Then $W_w(g) = 0$ unless $\lambda$ is $w'$-dominant. 
    \end{lemma}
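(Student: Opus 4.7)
The plan is to exploit the tension between the Whittaker transformation law on the left and the right Iwahori invariance of $f_w$, following the strategy of Lemma 3.7 in \cite{BBBG20}. After reducing (via a torus-equivariance change of variable $u \mapsto \varpi^{-\lambda} u \varpi^\lambda$) to showing that the integral $\phi_w(\lambda,w';z) = \int_U f_w(uw')\psi_\lambda(u)\,du$ vanishes whenever $\lambda$ is not $w'$-almost dominant, I would substitute $u = v u_0^{-1}$ for $u_0 \in U$ in the defining integral: by translation invariance of the Haar measure and multiplicativity of $\psi_\lambda$,
\[
    \phi_w(\lambda,w';z) \;=\; \psi_\lambda(u_0)^{-1}\int_U f_w(v u_0^{-1}w')\psi_\lambda(v)\,dv.
\]

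When in addition $(w')^{-1} u_0 w' \in J$, the element $u_0^{-1} w'$ lies in $w'J$, so right $J$-invariance of $f_w$ collapses the right-hand integrand $f_w(v u_0^{-1} w')$ back to $f_w(vw')$, yielding the self-identity $\phi_w(\lambda,w';z) = \psi_\lambda(u_0)^{-1}\phi_w(\lambda,w';z)$. We therefore obtain the dichotomy: either $\phi_w(\lambda,w';z) = 0$, or $\psi_\lambda(u_0) = 1$ for every $u_0 \in U$ whose $(w')^{-1}$-conjugate lands in $J$.

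The remaining step is to exhibit, whenever $\lambda$ fails $w'$-almost dominance at some simple root $\alpha_i$, a witness $u_0 = e_{-\alpha_i}(s) \in U$ satisfying $(w')^{-1}u_0 w' \in J$ but $\psi_\lambda(u_0) \neq 1$. Since $(w')^{-1} e_{-\alpha_i}(s) w' = e_{-(w')^{-1}\alpha_i}(\pm s)$, membership in $J$ forces $s \in \OO_F$ when $(w')^{-1}\alpha_i \in \Phi^+$ (the conjugate is a negative root element) and $s \in \p$ when $(w')^{-1}\alpha_i \in \Phi^-$ (the conjugate is a positive root element). On the other hand, by \Cref{lem:explicitpsi} we have $\psi_\lambda(e_{-\alpha_i}(s)) = \psi(\varpi^{\Lambda_i} s)$ with $\Lambda_i = \langle \alpha_i^\vee, \lambda \rangle$, which is non-trivial for generic $s$ with $\val(s) < -\Lambda_i$ since $\psi$ has conductor $\OO_F$. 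Simultaneous satisfiability of $s \in \OO_F$ and $\val(s) < -\Lambda_i$ requires $\Lambda_i \leq -1$, and of $s \in \p$ and $\val(s) < -\Lambda_i$ requires $\Lambda_i \leq -2$; these are precisely the negations of the two clauses in the definition of $w'$-almost dominance.

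The main obstacle is purely notational: aligning the sign conventions in the torus conjugation $\varpi^{-\lambda} e_{-\alpha_i}(s) \varpi^\lambda$, the orientation of the opposite Iwahori $J$, and the conductor computation for $\psi_\lambda$, so that the thresholds extracted from the case analysis match exactly the inequalities $\geq 0$ and $\geq -1$ appearing in the definition of $w'$-almost dominance, rather than being off by one. Once these conventions are pinned down, the argument is uniform and short.
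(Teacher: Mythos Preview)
The paper does not prove this lemma; it is stated as a citation of Lemma~3.7 in \cite{BBBG20} with no argument given. Your proposal correctly reconstructs the standard proof from that reference: exploit right $J$-invariance of $f_w$ against the left $\psi_\lambda$-equivariance by translating the integration variable by $u_0 = e_{-\alpha_i}(s)$, then read off the valuation thresholds that force $\psi_\lambda(u_0)=1$.

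One small point: invoking \Cref{lem:explicitpsi} for the identity $\psi_\lambda(e_{-\alpha_i}(s)) = \psi(\varpi^{\Lambda_i}s)$ is misplaced, since that lemma is stated for $u$ lying in a cell $S_{m,\sigma}^\Delta$ and is specific to the $\GL_{r+1}$ setting with the delta word. What you actually need is the elementary torus-conjugation identity $\varpi^{-\lambda} e_{-\alpha_i}(s)\varpi^{\lambda} = e_{-\alpha_i}(\varpi^{\langle \alpha_i,\lambda\rangle}s)$ together with the definition of $\psi$ on simple root subgroups; this holds for any split reductive $G$ and requires no cell decomposition. Apart from that cosmetic issue, the argument is sound and the case split on the sign of $(w')^{-1}\alpha_i$ lines up exactly with the two clauses of $w'$-almost dominance, as you note.
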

    
    Let $g\in \widetilde{G}$. By Iwahori decomposition, we can write $g = n\varpi^{-\lambda} w'j$ with $n \in U$, $\lambda \in \Lambda$, $w' \in W$, and $j \in J$. Transform $W_w$ on the left by $\psi$ and on the right by $j$ trivially to up get 
    \[
        \int_{U} f_w(u\varpi^{\lambda}w')\psi(u)\,du
    \]
    up to a constant. We can assume $\lambda_{r+1} = 0$ as the diagonal element in central, and we just get a factor of $z^\lambda$. Conjugate $u \mapsto \varpi^{-\lambda}u\varpi^{\lambda}$ to get
    \[
        \int_{U} f_w(uw')\psi_\lambda(u)\,du = \phi_w(\lambda,w'; z),
    \]
    up to a constant, $\psi_\lambda(u) = \psi(\varpi^{-\lambda}u\varpi^{\lambda})$, and \begin{equation}\label{eq:miw}
        \phi_w(\lambda,w'; z) = \int_{U} f_w(uw')\psi_\lambda(u)\,du.
    \end{equation}
    
    This integral $\phi_w(\lambda,w'; z)$ is the main object of our study as it gives all the values of the Iwahori Whittaker functions. The recursive calculation of it is given by Brubaker, Buciumas, Bump, and Gustafsson \cite{BBBG19, BBBG20}. The main goal of present paper is to calculate these values combinatorically. We do it in \Cref{section:calculation}. 
    
    Notice that the \cref{eq:miw} is an integral over $U$ which lives in the derived group of $G$. Hence, all the computations can be reduced to the derived group. We will need the following generators.
    
    \subsection{Generators of the derived group}\label{section:generators}
    
    Let $G' = [G, G]$ be the derived group of $G$, so $G'$ is semisimple. Let $\widetilde{G}'$ be the induced covering group. We mainly work with $\widetilde{G}'$ since all the calculations happen in the subgroup $\widetilde{U}^-$ of $\widetilde{G}'$. Denote $\widetilde{T}' = \widetilde{T} \cap \widetilde{G}'$, $\widetilde{B}' = \widetilde{B} \cap \widetilde{G}'$, and $\widetilde{K}' = \widetilde{K} \cap \widetilde{G}'$. The Iwasawa decomposition $G = BK$ lifts to $\widetilde{G} = \widetilde{B}\widetilde{K}$, and hence $\widetilde{G}' = \widetilde{B} '\widetilde{K}'$. By Section 4, \cite{McN12}, $\widetilde{G}$ splits over $U$, $U$, and $K$. Thus, we can identify $\widetilde{U}, \widetilde{U}^-, \widetilde{K}$ with their images in $G$ and in $G'$. Also, for $\alpha \in \Phi$ and $x \in \OO_F$, we have $e_{\alpha}(x) \in \widetilde{K}$, as one would expect.
    
    The group $\widetilde{G}'$ is a quotient of the universal central extension of $G'$, and thus admits the description in terms of generators of relations. Namely, the group $\widetilde{G}'$ is generated by symbols $e_{\alpha}(x)$ where $\alpha \in \Phi$ and $x \in F$, subject to the relations 
    \[
    e_{\alpha}(x)e_{\alpha}(y) = e_{\alpha}(x+y),
    \]
    \[
    w_\alpha(x)e_{\alpha}(y)w_{\alpha}(-x) = e_{-\alpha}(-x^{-2}y),
    \]
    where $w_{\alpha}(x) =
    e_{\alpha}(x)e_{-\alpha}(-x^{-1})$, and 
    \[
    e_{\alpha}(x)e_{\beta}(y) = \left[\prod_{\substack{i,j \in \Z^+ \\ i \alpha + j \beta = \gamma \in \Phi}}e_{\gamma}(c_{i,j,\alpha,\beta}x^iy^j)\right]e_\beta(y)e_\alpha(x),
    \]
    for all $x, y \in F$ and $\alpha, \beta \in \Phi$ with $\alpha + \beta \neq 0$, where $c_{i,j,\alpha,\beta}$ is a fixed collection of integers, completely determined by the root system $\Phi$. See Section 3 of \cite{McN11} and \cite{Ste68} for details.
    
    Note that the terms in the last product commute, so there is no ambiguity with respect to order of multiplication. In the noncommutative case we write $\prod_{k=m}^{n}x_k$ for $x_mx_{m+1}\dots x_n$, and $\prod_{k=n}^{m}x_k$ for $x_nx_{n-1}\dots x_m$ where $m \leq n$. 
    
    Moreover, there are additional relations coming from the choice of the central extension. Define the elements $h_\alpha(x) \in \widetilde{G}'$ by $h_\alpha(x) = w_\alpha(x)w_{\alpha}(-1)$ and let $l_\alpha = Q(\alpha^\vee) = B(\alpha^\vee, \alpha^\vee)/2$. Then the following identities hold in $\widetilde{G}'$.
    
    \begin{equation}\label{eq:hrelations}
        \begin{gathered}
            h_{\alpha}(x)e_{\beta}(y)h_{\alpha}(x)^{-1} = e_\beta(x^{ \langle \beta, \alpha^\vee \rangle }y), \\
            h_\alpha(x)h_{\alpha}(y) = (x, y)^{l_\alpha}h_\alpha(xy), \\
            h_\alpha(x)h_\beta(y)h_\alpha(x)^{-1} = h_\beta(y)(x,y)^{ \langle \beta, \alpha^\vee \rangle l_\beta}, \\
            h_\alpha(x) = h_{-\alpha}(x^{-1}),
        \end{gathered}
    \end{equation}
    
    where we recall that $(x,y) \in \mu_n$ is the value of Hilbert symbol and is central in $\widetilde{G}$.
    
    With these generators, $\widetilde{T}'$ is generated by the images of all elements of the form $h_\alpha(x)$ for $x \in F$, $\widetilde{U}'$ is generated by the images of $e_\alpha(x)$ for $\alpha \in \Phi^+$ and $x \in F$. Similarly, $\widetilde{U}^-$ is the subgroup of $\widetilde{G}'$ generated by all $e_{-\alpha}(x)$ where $\alpha \in \Phi^+$ and $x \in F$. 
    
    Let $W = N_G(T)/T$ be the Weyl group. The induced cover $\widetilde{W}$ of $W$ splits over the maximal compact subgroup $K$. If we choose coset representatives in $K$, we can identify $W$ with $\widetilde{W}$. 
    
    Let $s_i \in K$ be the representatives of $w_{\alpha_i}(-1) \in \widetilde{W}$. Then the elements $s_i$ generate $\widetilde{W}$. For any $w \in W$ write $w = s_{i_1}\dots s_{i_m}$, then we have the element $w = s_{i_1}\dots s_{i_m}$ that maps to $w \in W$ under the projection map $p$. For any positive root $\alpha \in \Phi^+$, choose a simple root $\alpha_i$ and a Weyl group element $w \in W$ such that $\alpha = w \cdot \alpha_i$. Then set $s_{\alpha} = \widetilde{w}s_i\widetilde{w}^{-1}$.
    
    \bibliographystyle{alphaurl}
    \bibliography{bibliography}
	
\end{document}